\newtheorem{theorem}{Theorem}[section]
\newtheorem{lemma}[theorem]{Lemma}
\newtheorem{corollary}[theorem]{Corollary}
\theoremstyle{definition}
\newtheorem{example}[theorem]{Example}
\theoremstyle{remark}
\numberwithin{equation}{section}
\begin{document}
\title[Commutators, Commutativity and Dimension in the Socle]{ Commutators, Commutativity and Dimension in the Socle of a Banach Algebra: A generalized Wedderburn-Artin and Shoda's Theorem  }
\author{F. Schulz \and R. Brits}
\address{Department of Mathematics, University of Johannesburg, South Africa}
\email{francoiss@uj.ac.za, rbrits@uj.ac.za}
\subjclass[2010]{15A60, 46H05, 46H10, 46H15, 47B10}
\keywords{rank, socle, trace, commutator, center}

\begin{abstract}
As a follow-up to work done in \cite{tracesocleident}, some new insights to the structure of the socle of a semisimple Banach algebra is obtained. In particular, it is shown that the socle is isomorphic as an algebra to the direct sum of tensor products of corresponding left and right minimal ideals. Remarkably,  the finite-dimensional case here reduces to the classical Wedderburn-Artin Theorem, and this approach does not use any continuous irreducible representations of the algebra in question. Furthermore, the structure of the socles for which the classical Shoda's Theorem for matrices can be extended, is characterized exactly as those socles which are minimal two-sided ideals. It is then shown that the set of commutators in the socle (i.e. $\left\{xy-yx : x, y \in \mathrm{Soc}\:A \right\}$) is a vector subspace. Finally, we characterize those socles which belong to the center of a Banach algebra and obtain results which suggests that the dimension of certain subalgebras of the socle in fact provides a measure, to some extent, of commutativity.  
\end{abstract}

\maketitle

\section{Introduction}
The notions of rank, trace and determinant are well-established for operator theory. Rather recently, in their paper entitled \textit{Trace and determinant in Banach algebras} \cite{aupetitmoutontrace}, Aupetit and Mouton managed to show that these notions can be developed, without the use of operators, in a purely spectral and analytic manner. This paper is fundamental to our discussion here, for this alternative point of view not only permits the possibility to consider rank and trace related problems in a more general setting, but also allows for new insights to the structure of the socle of a semisimple Banach algebra. As in \cite{tracesocleident} we briefly summarize some of the theory in \cite{aupetitmoutontrace} before we proceed.\\

By $A$ we denote a complex Banach algebra with identity element $\mathbf 1$ and invertible group $G(A)$. Moreover, it will be assumed throughout that $A$ is semisimple (i.e. the Jacobson radical of $A$, denoted $\mathrm{Rad}\:A$, only contains $0$). By $Z(A)$ we denote the center of $A$, that is, the set of all $x \in A$ such that $xy=yx$ for all $y \in A$. For $x\in A$ we denote by $\sigma_A(x) =\left\{\lambda\in\mathbb C:\lambda\mathbf{1}-x\notin G(A)\right\}$, $\rho_{A} (x) = \sup\left\{\left|\lambda\right| : \lambda \in \sigma_{A} (x)\right\}$ and $\sigma_{A} '(x)=\sigma_A(x) -\{0\}$ the spectrum, spectral radius and nonzero spectrum of $x$, respectively. If the underlying algebra is clear from the context, then we shall agree to omit the subscript $A$ in the notation $\sigma_A(x)$, $\rho_{A} (x)$ and $\sigma_{A} '(x)$. This convention will also be followed in the forthcoming definitions of rank, trace, and so forth. We shall also agree to reserve the notation $\cong$ exclusively for algebra isomorphisms. Any other type of isomorphism will explicitly be referred to.\\ 

For each nonnegative integer $m$, let
$$\mathcal{F}_{m} = \left\{a \in A: \#\sigma'(xa) \leq m \;\,\mathrm{for\;all}\;\,x \in A \right\},$$
where the symbol $\#K$ denotes the number of distinct elements in a set $K\subseteq \mathbb C$. Following Aupetit and Mouton in \cite{aupetitmoutontrace}, we define the \textit{rank} of an element $a$ of $A$ as the smallest integer $m$ such that $a \in \mathcal{F}_{m}$, if it exists; otherwise the rank is infinite. In other words,
$$\mathrm{rank}\,(a) = \sup_{x \in A} \#\sigma'(xa).$$
If $a \in A$ is a finite-rank element, then
$$E(a) = \left\{x \in A : \#\sigma'(xa) = \mathrm{rank}\,(a) \right\}$$
is a dense open subset of $A$ \cite[Theorem 2.2]{aupetitmoutontrace}. A finite-rank element $a$ of $A$ is said to be a \textit{maximal finite-rank element} if $\mathrm{rank}\,(a) = \#\sigma'(a)$. With respect to $\mathrm{rank}$ it is further useful to know that $\sigma ' (xa)=\sigma '(ax)$ for all $x, a \in A$ (Jacobson's Lemma, \cite[Lemma 3.1.2.]{aupetit1991primer}). It can be shown \cite[Corollary 2.9]{aupetitmoutontrace} that the socle, written $\mathrm{Soc}\:A$, of a semisimple Banach algebra $A$ coincides with the collection $\bigcup_{m = 0}^{\infty} \mathcal{F}_{m}$ of finite rank elements. We mention a few elementary properties of the rank of an element \cite[p. 117]{aupetitmoutontrace}. Firstly, $\#\sigma'(a) \leq \mathrm{rank}\,(a)$ for all $a \in A$. Furthermore, $\mathrm{rank}\,(xa) \leq \mathrm{rank}\,(a)$ and $\mathrm{rank}\,(ax)\leq \mathrm{rank}\,(a)$ for all $x, a \in A$, with equality if $x \in G(A)$. Moreover, the rank is lower semicontinuous on $\mathrm{Soc}\:A$. It is also subadditive, i.e. $\mathrm{rank}\,(a+b) \leq \mathrm{rank}\,(a) + \mathrm{rank}\,(b)$ for all $a, b \in A$ \cite[Theorem 2.14]{aupetitmoutontrace}. Finally, if $p$ is a projection of $A$, then $p$ has rank one if and only if $p$ is a minimal projection, that is $pAp = \mathbb{C}p$. It is also worth mentioning here that a projection $p$ is minimal if and only if $Ap$ is a nontrivial left ideal which does not contain any left ideals other than $\left\{0 \right\}$ and itself, that is, if and only if $Ap$ is a nontrivial minimal left ideal \cite[Lemma 30.2]{bonsall1973complete}. A similar result holds true for the right ideal $pA$. We will also define a minimal two-sided ideal in this manner, that is, as a two-sided ideal which does not contain any two-sided ideals other than $\left\{0 \right\}$ and itself.\\

The following result is fundamental to the theory developed in \cite{aupetitmoutontrace} and is mentioned here for convenient referencing later on:\\

\textbf{Diagonalization Theorem} \cite[Theorem 2.8]{aupetitmoutontrace}: Let $a \in A$ be a nonzero maximal finite-rank element and denote by $\lambda_{1}, \ldots, \lambda_{n}$ its nonzero distinct spectral values. Then there exists $n$ orthogonal minimal projections $p_{1}, \ldots, p_{n} \in Aa \cap aA$ such that 
$$a = \lambda_{1}p_{1} + \cdots + \lambda_{n}p_{n}.$$

In particular, the Diagonalization Theorem easily implies the well-known result that every element of the socle is \emph{Von Neumann regular}, that is, for each $a \in \mathrm{Soc}\:A$, there exists an $x \in \mathrm{Soc}\:A \subseteq A$ such that $a = axa$ \cite[Corollary 2.10]{aupetitmoutontrace}. Another useful result is \cite[Theorem 2.16]{aupetitmoutontrace}, which states that for any set of nonzero orthogonal finite-rank projections $\left\{p_{1}, \ldots, p_{n}\right\}$ we have
$$\mathrm{rank}\,\left(\alpha_{1}p_{1} + \cdots +\alpha_{n}p_{n}\right) = \mathrm{rank}\,\left(p_{1}\right) + \cdots +\mathrm{rank}\,\left(p_{n}\right)$$
for all nonzero complex numbers $\alpha_{1}, \ldots, \alpha_{n}$.\\

If $a \in \mathrm{Soc}\:A$ we define the \textit{trace} of $a$ as in \cite{aupetitmoutontrace} by
$$\mathrm{Tr}\,(a) = \sum_{\lambda \in \sigma (a)} \lambda m\left(\lambda, a\right),$$
where $m(\lambda,a)$ is the \emph{multiplicity of $a$ at $\lambda$}. A brief description of the notion of multiplicity in the abstract case goes as follows (for particular details one should consult \cite{aupetitmoutontrace}): Let $a \in \mathrm{Soc}\:A$, $\lambda\in\sigma(a)$ and let $B(\lambda,r)$ be an open disk centered at $\lambda$ such that $B(\lambda,r)$ contains no other points of $\sigma(a)$. It can be shown \cite[Theorem 2.4]{aupetitmoutontrace} that there exists an open ball, say $U\subseteq A$, centered at $\mathbf{1}$ such that $\#\left[\sigma(xa)\cap B(\lambda,r) \right]$ is constant as $x$ runs through $E(a)\cap U$. This constant integer is the multiplicity of $a$ at $\lambda$. It can also be shown that $m\left(\lambda,a\right) \geq 1$ and
\begin{equation}
\sum_{\alpha \in \sigma (a)} m(\alpha, a) = \left\{\begin{array}{cl} 1+\mathrm{rank}\,(a) & \mathrm{if}\;\,0 \in \sigma (a) \\
\mathrm{rank}\,(a) & \mathrm{if}\;\,0 \notin \sigma (a).
\end{array}\right.
\label{beq0}
\end{equation} 
Furthermore, we note that the trace has the following useful properties:
\begin{itemize}
\item[(i)]
$\mathrm{Tr}$ is a linear functional on $\mathrm{Soc}\:A$ (\cite[Theorem 3.3]{aupetitmoutontrace} and \cite[Lemma 2.1]{tracesocleident}).
\item[(ii)]
$\mathrm{Tr}\,(ab) = \mathrm{Tr}\,(ba)$ for each $a \in \mathrm{Soc}\:A$ and $b \in A$ \cite[Corollary 2.5]{tracesocleident}.
\item[(iii)]
For any $a \in \mathrm{Soc}\:A$, if $\mathrm{Tr}\,(xa) = 0$ for each $x \in \mathrm{Soc}\:A$, then $a = 0$ \cite[Corollary 3.6]{aupetitmoutontrace}.
\end{itemize}

Let $\lambda \in \sigma (a)$ and suppose that $B(\lambda,2r)$ separates $\lambda$ from the remaining spectrum of $a$. Let $f_{\lambda}$ be the holomorphic function which takes the value $1$ on $B(\lambda,r)$ and the value $0$ on $\mathbb{C} - \overline{B}(\lambda,r)$. If we now let $\Gamma_{0}$ be a smooth contour which surrounds $\sigma (a)$ and is contained in the domain of $f_{\lambda}$, then
$$p\left(\lambda, a\right) = f_{\lambda} (a) = \frac{1}{2\pi i}\int_{\Gamma_{0}} f_{\lambda} (\alpha) \left(\alpha \mathbf{1}-a\right)^{-1}\,d\alpha$$
is referred to as the \textit{Riesz projection} associated with $a$ and $\lambda$. By the Holomorphic Functional Calculus, Riesz projections associated with $a$ and distinct spectral values are orthogonal and for $\lambda \neq 0$
\begin{equation}
p\left(\lambda, a\right) = \frac{a}{2\pi i} \int_{\Gamma_{0}} \frac{f_{\lambda}\left(\alpha\right)}{\alpha} \left(\alpha\mathbf{1} - a\right)^{-1}\,d\alpha \in Aa \cap aA. \label{beq1}
\end{equation}
The following results will also be useful: Let $a \in A$ have finite rank and let $\lambda_{1}, \ldots, \lambda_{n}$ be nonzero distinct elements of its spectrum. If 
$$p = p\left(\lambda_{1}, a\right) + \cdots + p\left(\lambda_{n}, a\right),$$ then by \cite[Theorem 2.6]{aupetitmoutontrace} we have
$$\mathrm{rank}\,(p) = m\left(\lambda_{1}, a\right) + \cdots + m\left(\lambda_{n}, a\right).$$
Moreover, $\mathrm{rank}\,(p) = m\left(1, p\right)$ \cite[Corollary 2.7]{aupetitmoutontrace}. It is customary to refer to $p$ here as the Riesz projection associated with $a$ and $\lambda_{1}, \ldots , \lambda_{n}$.\\

In the operator case, $A=B(X)$ (bounded linear operators on a Banach space $X$), the ``spectral" rank and trace both coincide with the respective classical operator definitions.

\section{Preliminaries}

If $p$ is a projection of $A$, then $pAp$ is a closed semisimple subalgebra of $A$ with identity $p$ \cite[Exercise 3.6]{aupetit1991primer}. The subalgebra $pAp$ is very useful in the theory of rank, trace and determinant, primarily because of the following reasons:
\begin{equation}
\sigma_{pAp}' \left(pxp\right) = \sigma_{A}' \left(pxp\right)
\label{beq2.1}
\end{equation}
and
\begin{equation}
\mathrm{rank}_{pAp}(pxp) = \mathrm{rank}_{A}(pxp)
\label{beq2.2}
\end{equation}
for each $x \in A$. The proof of (\ref{beq2.1}) is not hard and (\ref{beq2.2}) is a consequence of (\ref{beq2.1}) and Jacobson's Lemma.\\

Let $p$ be a projection of $A$ with $\mathrm{rank}\,(p) \leq 1$. By $J_{p}$ we denote the two-sided ideal generated by $p$, that is, we let
$$J_{p} := \left\{ \sum_{j=1}^{n} x_{j}py_{j} : x_{j}, y_{j} \in A, n \geq 1\;\,\mathrm{an\;integer} \right\}.$$ 

The next two lemmas are well-known results in Ring Theory. However, since these results will play a central role in the development of the subsequent theory, we provide a short proof of the latter and refer the reader to the recent reference \cite{tracesocleident} for a proof of the former.  

\begin{lemma}\label{b2.1}\cite[Lemma 3.5]{tracesocleident}
There exists a collection of two-sided ideals $\left\{J_{p} : p \in \mathcal{P} \right\}$ such that every element of  $\mathrm{Soc}\:A$ can be written as a finite sum of members of the $J_{p}$. Moreover, the two-sided ideals are pairwise orthogonal, that is, if $p, q \in \mathcal{P}$ with $p \neq q$, then
$$J_{p}J_{q} = J_{q}J_{p} = \left\{0 \right\}.$$
\end{lemma}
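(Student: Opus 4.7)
The plan is to let $\mathcal{P}$ be a set of rank-one projections chosen so that the corresponding ideals $\{J_p : p \in \mathcal{P}\}$ list, without repetition, all two-sided ideals of $A$ of the form $ApA$ with $p$ rank-one; concretely, pick one representative per equivalence class under the relation $p \sim q \Leftrightarrow J_p = J_q$. With this choice in hand, I expect both assertions of the lemma to follow once I verify that each $J_p$ is in fact a \emph{minimal} two-sided ideal of $A$.

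First I would establish this minimality for a fixed rank-one projection $p$. Let $I$ be a nonzero two-sided ideal of $A$ with $I \subseteq J_p$. Since $p$ is rank-one, $Ap$ is a minimal left ideal of $A$. If $Ip \neq 0$, then $Ip$ is a nonzero left ideal of $A$ contained in $Ap$, so $Ip = Ap$; in particular $p \in Ap = Ip \subseteq I$, which gives $J_p = ApA \subseteq I$ and hence $I = J_p$. To rule out $Ip = 0$, I would compute $I \cdot J_p = IA \cdot pA \subseteq I \cdot pA = (Ip)A = 0$, so that $I^2 \subseteq I J_p = 0$; semiprimeness of the semisimple algebra $A$ then forces $I = 0$, a contradiction. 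I expect this step, in particular the semiprimeness argument ruling out $Ip = 0$, to be the one genuine obstacle.

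From here the conclusion is essentially routine. For distinct $p, q \in \mathcal{P}$ the intersection $J_p \cap J_q$ is a two-sided ideal of $A$ properly contained in both minimal ideals (equality with either one would force $J_p = J_q$ by minimality), hence $J_p \cap J_q = \{0\}$. Therefore $J_p J_q \subseteq J_p \cap J_q = \{0\}$, and symmetrically $J_q J_p = \{0\}$. For the decomposition, I would invoke the classical description of $\mathrm{Soc}\,A$ as the sum of the minimal left ideals of $A$ \cite{bonsall1973complete}; each such ideal has the form $Aq$ for a rank-one projection $q$, and $Aq \subseteq AqA = J_q$. Thus any $a \in \mathrm{Soc}\,A$ admits a decomposition $a = x_1 + \cdots + x_n$ with $x_j \in J_{q_j}$, and replacing each $q_j$ by the unique $p \in \mathcal{P}$ with $J_p = J_{q_j}$ yields the required finite-sum representation.
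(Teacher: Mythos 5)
Your argument is correct. Note, however, that the paper does not actually prove this lemma: it explicitly defers the proof to \cite[Lemma 3.5]{tracesocleident} and only supplies a proof of the companion statement (Lemma \ref{b2.2}, minimality of $J_{p}$). What you have done is fold that companion result into your proof and derive it by a purely ring-theoretic route: $Ap$ is a minimal left ideal, so either $Ip=Ap$ forces $p\in I$, or $Ip=0$ forces $I^{2}=0$ and semiprimeness kills $I$. The paper's own proof of Lemma \ref{b2.2} instead runs through the spectral machinery it has set up --- Jacobson's Lemma, semisimplicity, von Neumann regularity of socle elements, and minimality of $p$ in the form $pAp=\mathbb{C}p$ --- to produce $px_{0}ay_{0}p=\lambda p\neq 0$. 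Your version is shorter and more classical; the paper's version stays inside the Aupetit--Mouton framework it is advertising. The remaining steps of your proposal (orthogonality via $J_{p}J_{q}\subseteq J_{p}\cap J_{q}=\{0\}$ for distinct minimal two-sided ideals, and the decomposition via $\operatorname{Soc}A=\sum Aq$ with $Aq\subseteq J_{q}$ and $J_{q}=J_{p}$ for a unique representative $p\in\mathcal{P}$) are sound, with the one presentational caveat that $ApA$ should be read as the linear span of products $xpy$ so that it agrees with the paper's definition of $J_{p}$ as the set of finite sums $\sum_{j}x_{j}py_{j}$.
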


\begin{lemma}\label{b2.2}
Let $p$ be a projection of $A$ with $\mathrm{rank}\,(p) \leq 1$. Then $J_{p}$ is a minimal two-sided ideal.
\end{lemma}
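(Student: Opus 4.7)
The plan is to dispose of the degenerate case $\mathrm{rank}\,(p) = 0$ (where $p = 0$ and $J_p = \{0\}$, vacuously minimal) and focus on the case $\mathrm{rank}\,(p) = 1$. Here the key structural inputs are that $p$ is a minimal projection with $pAp = \mathbb{C}p$, that $Ap$ is a minimal left ideal of $A$, and that $pA$ is a minimal right ideal of $A$.

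Let $I$ be any nonzero two-sided ideal of $A$ contained in $J_p$; the goal is to show $p \in I$, since this immediately yields $J_p = ApA \subseteq I$ and hence $I = J_p$. Pick $c \in I$ with $c \neq 0$ and, using $c \in J_p$, write
\begin{equation*}
c \;=\; \sum_{j=1}^{n} x_j\, p\, y_j
\end{equation*}
with $n$ chosen as small as possible. A routine substitution argument (if, say, $x_n p = \sum_{j<n} \alpha_j x_j p$, one can absorb the last term into the previous $n-1$ summands and reduce $n$) shows that $\{x_j p\}_{j=1}^{n}$ is linearly independent in $Ap$, and by the symmetric argument $\{p y_j\}_{j=1}^{n}$ is linearly independent in $pA$.

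I would then exploit minimality of $Ap$: the nonzero element $x_1 p$ generates $Ap$ as a left $A$-module, so there exists $b \in A$ with $b x_1 p = p$, and in particular $p b x_1 p = p \cdot p = p$. Defining scalars $\alpha_j(a) \in \mathbb{C}$ by the relation $p a x_j p = \alpha_j(a)\, p$ (legitimate because $pAp = \mathbb{C}p$) gives $\alpha_1(b) = 1$, and a direct computation produces
\begin{equation*}
p b c \;=\; \sum_{j=1}^{n} (p b x_j p)\, y_j \;=\; \sum_{j=1}^{n} \alpha_j(b)\, p y_j .
\end{equation*}
Linear independence of $\{p y_j\}$ together with $\alpha_1(b) = 1$ forces $p b c \neq 0$.

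Finally, $pbc$ is a nonzero element of the minimal right ideal $pA$, so $(pbc)\,A = pA$, which produces a $u \in A$ with $p b c u = p$. But $c \in I$ and $I$ is two-sided, hence $p b c u \in I$, giving $p \in I$ as required. The real obstacle is the middle step: recognizing that the minimal length of the representation $c = \sum x_j p y_j$ enforces the linear independence of $\{x_j p\}$ and $\{p y_j\}$, since this is what allows the minimal-ideal structure of $Ap$ and $pA$ to be brought to bear; once that is in place, everything else follows from routine manipulations inside minimal one-sided ideals.
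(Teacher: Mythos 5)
Your proof is correct, but it reaches the key step --- producing elements that compress $c$ down to $p$ --- by a genuinely different route from the paper. The paper argues by contradiction: assuming $pxcyp=0$ for all $x,y\in A$, it invokes Jacobson's Lemma and semisimplicity to conclude $cyp=0$ for every $y$, hence $cwc=0$ for every $w$, and then von Neumann regularity of socle elements (a consequence of the Diagonalization Theorem) forces $c=0$; minimality of $p$ then turns some nonzero $px_0cy_0p$ into a nonzero multiple of $p$. You instead give a constructive, purely ring-theoretic argument: a minimal-length representation $c=\sum x_jpy_j$ yields linear independence of $\{x_jp\}$ and $\{py_j\}$, and the minimality of the one-sided ideals $Ap$ and $pA$ (cited in the paper from Bonsall--Duncan) supplies explicit $b,u$ with $pbcu=p$. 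What the paper's route buys is brevity and consistency with its spectral toolkit (it reuses regularity and Jacobson's Lemma rather than the one-sided ideal structure); what yours buys is independence from that machinery --- it is the classical argument valid in any ring where $Ap$ and $pA$ are minimal one-sided ideals and $pAp=\mathbb{C}p$ --- and your minimal-length linear-independence reduction anticipates the technique the authors themselves use later (e.g.\ in the injectivity part of Theorem \ref{b2.3}). One small economy worth noting: you only need the independence of $\{py_j\}$ together with a single nonzero $x_jp$, so the symmetric reduction on the $x_j$ side is not strictly necessary.
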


\begin{proof}
If $p = 0$, then the result is obviously true. So assume that $p \neq 0$. Let $\left\{0 \right\} \neq J \subseteq J_{p}$ be a two-sided ideal. We claim that $J = J_{p}$: It will suffice to show that $p \in J$. By hypothesis, there exists an $a \in J$ such that $a \neq 0$ and $a = \sum_{j = 1}^{n} x_{j}py_{j}$ for some $x_{1}, \ldots, x_{n}, y_{1}, \ldots, y_{n} \in A$. Assume that $pxayp = 0$ for all $x, y \in A$. Then, by Jacobson's Lemma and the semisimplicity of $A$ it follows that $ayp = 0$ for all $y \in A$. However, then $awa = \sum_{j = 1}^{n} awx_{j}py_{j} = 0$ for all $w \in A$. But $a$ is Von Neumann regular, so this implies that $a = 0$ which is absurd. Therefore, $px_{0}ay_{0}p \neq 0$ for some $x_{0}, y_{0} \in A$. Since $p$ has rank one, it is minimal. Hence, $px_{0}ay_{0}p = \lambda p$ for some $\lambda \in \mathbb{C}-\left\{0 \right\}$. Thus, $p \in J$. This proves our claim which gives the result.
\end{proof}



\begin{theorem}\label{b2.3}
Let $p$ be a projection of $A$ with $\mathrm{rank}\,(p) \leq 1$. Then there exists a linear isomorphism from $Ap \otimes pA$ onto $J_{p}$. 
\end{theorem}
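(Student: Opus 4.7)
The plan is to exploit the universal property of the tensor product to construct the desired isomorphism directly from the obvious bilinear multiplication, and then use the minimality of the left ideal $Ap$ (guaranteed since $p$ has rank at most one) to force injectivity. We may dispose of the trivial case $p=0$ at once, and henceforth assume $p$ is a rank-one (hence minimal) projection, so in particular $pAp=\mathbb C p$ and $Ap$ is a minimal left ideal of $A$.

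First, I would observe that the map $(xp,\,py)\mapsto xpy$ is a well-defined bilinear map $Ap\times pA\to J_p$: well-defined because $xp\cdot py=(xp)(py)$ depends only on $xp$ and $py$ (using $p^2=p$), bilinear in the obvious way, and its image lies in $J_p$ by definition. The universal property of the tensor product therefore yields a unique linear map
\[
\tilde\varphi: Ap\otimes pA\longrightarrow J_p,\qquad \tilde\varphi\Bigl(\sum_i x_ip\otimes py_i\Bigr)=\sum_i x_ipy_i.
\]
Surjectivity is then immediate from the defining form of elements in $J_p$.

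The main obstacle, as expected for any theorem of this type, is injectivity: one must pass from a multiplicative relation inside $A$ to a tensorial relation in $Ap\otimes pA$. My approach would be as follows. Let $\sum_{i=1}^n a_i\otimes b_i\in\ker\tilde\varphi$, and by standard tensor-cleanup choose the representation so that $b_1,\ldots,b_n\in pA$ are linearly independent. Then $\sum_i a_ib_i=0$ in $A$. For any $x\in A$, multiplying on the left by $p$ gives $\sum_i(pxa_i)b_i=0$; since $a_i\in Ap$, each $pxa_i$ lies in $pAp=\mathbb C p$, so there are scalars $\lambda_i(x)$ with $pxa_i=\lambda_i(x)p$. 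Using $pb_i=b_i$, we obtain $\sum_i\lambda_i(x)b_i=0$, and linear independence of the $b_i$ yields $\lambda_i(x)=0$ for every $i$ and every $x$. Consequently $pAa_i=\{0\}$ for each $i$.

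The final step is to convert $pAa_i=\{0\}$ into $a_i=0$, and here the rank-one hypothesis is decisive. Indeed, $Aa_i$ is a left ideal contained in the minimal left ideal $Ap$, so either $Aa_i=\{0\}$ (forcing $a_i=0$, since $A$ is unital) or $Aa_i=Ap$. In the latter case $p=ca_i$ for some $c\in A$, and then $pca_i=p^2=p\neq 0$, contradicting $pAa_i=\{0\}$. Thus $a_i=0$ for all $i$, so $\sum_i a_i\otimes b_i=0$, and $\tilde\varphi$ is injective. Combined with surjectivity, this establishes the required linear isomorphism $Ap\otimes pA\cong J_p$. I do not anticipate the trivial $p=0$ case or the surjectivity step to cause difficulty; the only real work is the passage $pAa_i=\{0\}\Rightarrow a_i=0$, which is precisely where Lemma \ref{b2.2} and minimality of $Ap$ earn their keep.
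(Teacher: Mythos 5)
Your proof is correct, and the construction, the surjectivity argument, and the first half of the injectivity argument (reduce to linearly independent tensor factors, multiply by an element of $pA$ and use $pAp=\mathbb{C}p$ to extract scalars) coincide with the paper's. Where you diverge is the finishing move: having reduced to $pAa_i=\{0\}$, you observe that $Aa_i$ is a left ideal contained in the minimal left ideal $Ap$ and rule out $Aa_i=Ap$ via the contradiction $p=p^2=pca_i\in pAa_i=\{0\}$, so that $a_i=\mathbf{1}a_i=0$. The paper instead works symmetrically on the other tensor factor, reducing to $py_jwp=0$ for all $w\in A$, and then kills $y_j$ by Jacobson's Lemma together with semisimplicity ($\sigma(yy_j)=\{0\}$ for every $y$ forces $y_j\in\mathrm{Rad}\:A=\{0\}$). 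Your route is purely ring-theoretic and arguably more elementary, at the price of invoking the equivalence between minimal projections and minimal left ideals (which the paper does record in the introduction with a reference to Bonsall--Duncan); the paper's route stays inside the spectral framework it has already built. One small correction: the result earning its keep at the end of your argument is not Lemma \ref{b2.2} (minimality of the two-sided ideal $J_p$), which you never actually use, but the minimality of the left ideal $Ap$.
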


\begin{proof}
If $p = 0$ then the result is trivially true. So assume that $p$ is a rank one projection. Let $\tau: Ap \oplus pA \rightarrow Ap \otimes pA$ be the \emph{tensor map}, that is, let
$$\tau (x, y) = x \otimes y \,\;\:\left(x \in Ap, y \in pA\right).$$
Next we consider the mapping $\psi: Ap \oplus pA \rightarrow J_{p}$ given by
$$\psi(x, y) = xpy \;\,\:\left(x \in Ap, y \in pA\right).$$
It is routine to show that $\psi$ is a bilinear mapping. Thus, by \cite[Theorem 42.6]{bonsall1973complete} there exists a unique linear mapping $\phi: Ap \otimes pA \rightarrow J_{p}$ such that $\psi = \phi \circ  \tau$. It will therefore suffice to show that $\phi$ is bijective. Let $x \in J_{p}$ be arbitrary. Then $x = \sum_{j=1}^{n} u_{j}pv_{j}$ for some $u_{1}, \ldots, u_{n}, v_{1}, \ldots, v_{n} \in A$. By the linearity of $\phi$ and the fact that $p = p^{2}$, it readily follows that
$$\phi \left( \sum_{j=1}^{n} u_{j}p \otimes pv_{j} \right) = x.$$
This proves that $\phi$ is surjective. To see that $\phi$ is injective, suppose that
\begin{equation}
\phi \left( \sum_{j=1}^{n} x_{j} \otimes y_{j} \right) = 0,
\label{beq2}
\end{equation}
where $x_{1}, \ldots, x_{n} \in Ap$ and $y_{1}, \ldots, y_{n} \in pA$. By \cite[Lemma 42.3]{bonsall1973complete} we may assume without loss of generality that $\left\{x_{1}, \ldots, x_{n} \right\}$ and $\left\{y_{1}, \ldots, y_{n} \right\}$ are linearly independent subsets of $Ap$ and $pA$, respectively. From (\ref{beq2}) it follows that
\begin{equation}
x_{1}py_{1} + \cdots x_{n}py_{n} = 0.
\label{beq3}
\end{equation}
Using (\ref{beq3}), the minimality of $p$, the fact that $x_{i}p = x_{i}$ for each $i \in \left\{1, \ldots, n \right\}$, and the linear independence of $\left\{x_{1}, \ldots, x_{n} \right\}$, we may conclude that for any $j \in \left\{1, \ldots, n \right\}$, we have 
\begin{equation}
py_{j}wp = 0\,\; \mathrm{for\;all}\;\, w \in A.
\label{beq4}
\end{equation}
Fix any $j \in \left\{1, \ldots, n \right\}$ and let $y \in A$ be arbitrary. Since $y_{j} = py_{j}$, it follows from (\ref{beq4}) and Jacobson's Lemma that $\sigma \left(yy_{j}\right) = \left\{0 \right\}$. Hence, by the semisimplicity of $A$ it follows that $y_{j} = 0$ for each $j \in \left\{ 1, \ldots, n\right\}$. From this it now follows that $x_{j} \otimes y_{j} = 0$ for all $j \in \left\{ 1, \ldots, n\right\}$. Thus, $\sum_{j=1}^{n} x_{j} \otimes y_{j} = 0$, and so, $\phi$ is injective. This completes the proof.
\end{proof}

By definition $Ap \otimes pA$ is a vector space. However, Theorem \ref{b2.3} readily gives the following result:

\begin{corollary}\label{b2.4}
Let $\phi: Ap \otimes pA \rightarrow J_{p}$ be the linear isomorphism obtained in Theorem \textnormal{\ref{b2.3}}. Define multiplication in $Ap \otimes pA$ by letting
\begin{equation}
uv = \phi^{-1}\left(\phi(u)\phi(v)\right)\;\,\:\left(u, v \in Ap \otimes pA\right).
\label{beq5}
\end{equation}
Then $Ap \otimes pA$ equipped with the above multiplication scheme is an algebra.
\end{corollary}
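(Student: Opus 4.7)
The approach is pure transport of structure: since $\phi$ is a bijection, we can pull back any algebraic operation on $J_{p}$ to $Ap \otimes pA$, and the algebra axioms on the target are automatically inherited by the domain. The only content in the statement is therefore to observe that $J_{p}$ really is an algebra under the multiplication of $A$, and then to mechanically verify that the pullback (\ref{beq5}) satisfies bilinearity, associativity and scalar compatibility.

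First I would remark that $J_{p}$, being a two-sided ideal of $A$, is closed under the multiplication of $A$; in particular, the restriction of the multiplication of $A$ to $J_{p}$ endows $J_{p}$ with the structure of an associative (generally non-unital) algebra. Since $\phi: Ap \otimes pA \to J_{p}$ is a linear isomorphism by Theorem \ref{b2.3}, the composition on the right-hand side of (\ref{beq5}) is defined for every $u,v \in Ap\otimes pA$, and the resulting element lies in $Ap\otimes pA$. So the operation is well defined.

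Next I would check the algebra axioms by a direct computation, using only the linearity of $\phi$ and $\phi^{-1}$ together with the corresponding identity in $J_{p}$. For associativity,
\[
(uv)w = \phi^{-1}\bigl(\phi(uv)\phi(w)\bigr) = \phi^{-1}\bigl(\phi(u)\phi(v)\phi(w)\bigr) = \phi^{-1}\bigl(\phi(u)\phi(vw)\bigr) = u(vw),
\]
where the middle equality uses associativity in $J_{p}\subseteq A$. Left distributivity reads
\[
u(v+w) = \phi^{-1}\bigl(\phi(u)(\phi(v)+\phi(w))\bigr) = \phi^{-1}\bigl(\phi(u)\phi(v)\bigr) + \phi^{-1}\bigl(\phi(u)\phi(w)\bigr) = uv + uw,
\]
using linearity of $\phi^{-1}$ and distributivity in $J_{p}$; right distributivity is analogous. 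Scalar compatibility $\alpha(uv) = (\alpha u)v = u(\alpha v)$ follows in exactly the same way from the linearity of $\phi$ and the corresponding property in $J_{p}$.

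There is no genuine obstacle here; the corollary is essentially the tautology that any linear bijection between a vector space and an algebra transports the algebra structure. The only nontrivial input is Theorem \ref{b2.3}, which has already been established, together with the fact that $J_{p}$ is closed under products; both are already in hand, so the verification is entirely routine.
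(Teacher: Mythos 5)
Your proposal is correct and is essentially the paper's own argument: the paper simply notes that the multiplication is well-defined and that the algebra axioms are inherited from $J_{p}$ via the linear bijection $\phi$, which is exactly the transport-of-structure verification you carry out in detail. No further comparison is needed.
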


\begin{proof}
There is nothing ambiguous about the right-side of (\ref{beq5}), so the multiplication is well-defined. Moreover, the algebra properties are inherited from $J_{p}$ which gives the result.
\end{proof}

It is interesting to note that the multiplication scheme above for elementary tensors in $Ap \otimes pA$ is actually given by
$$\left(x_{1} \otimes y_{1}\right)\left(x_{2} \otimes y_{2}\right) = \mathrm{Tr}\,\left(y_{1}x_{2}\right) \left(x_{1} \otimes y_{2}\right) \;\,\:\left(x_{1}, x_{2} \in Ap, y_{1}, y_{2} \in pA\right).$$
Moreover, since $\phi$ obtained in Theorem \ref{b2.3} is a linear isomorphism and
$$\phi^{-1}\left(\phi(u)\right)\phi^{-1}\left(\phi(v)\right) = uv = \phi^{-1}\left(\phi(u)\phi(v)\right)$$
for all $u, v \in Ap \otimes pA$, we readily obtain the following result:

\begin{corollary}\label{b2.5}
Let $p$ be a projection of $A$ with $\mathrm{rank}\,(p) \leq 1$. Then $Ap \otimes pA \cong J_{p}$. 
\end{corollary}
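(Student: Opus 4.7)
The plan is to observe that the algebra isomorphism in question is nothing other than the linear isomorphism $\phi: Ap \otimes pA \to J_p$ constructed in Theorem \ref{b2.3}, once we verify that $\phi$ is multiplicative with respect to the product defined in Corollary \ref{b2.4}. Since $\phi$ is already known to be a linear bijection, the only outstanding requirement for an algebra isomorphism is the multiplicative property $\phi(uv) = \phi(u)\phi(v)$.

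First, I would invoke Corollary \ref{b2.4} to recall that multiplication on $Ap \otimes pA$ is defined precisely by $uv = \phi^{-1}(\phi(u)\phi(v))$ for all $u, v \in Ap \otimes pA$. Applying $\phi$ to both sides of this identity yields $\phi(uv) = \phi(u)\phi(v)$, which is exactly the multiplicativity we require. Combining this with the linearity and bijectivity of $\phi$ established in Theorem \ref{b2.3} immediately delivers that $\phi$ is an algebra isomorphism, so that $Ap \otimes pA \cong J_p$.

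There is no real obstacle here because the multiplication in Corollary \ref{b2.4} was engineered specifically so that $\phi$ intertwines the two products; the corollary is essentially a tautological restatement of this fact. The only thing that needed genuine work was the construction and injectivity of $\phi$ in Theorem \ref{b2.3}, and the consistency of the induced product in Corollary \ref{b2.4}, both of which are already in hand. The degenerate case $p = 0$ also requires no additional treatment, since both $Ap \otimes pA$ and $J_p$ are then the zero algebra.
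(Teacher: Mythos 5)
Your proposal is correct and follows exactly the paper's own reasoning: the product on $Ap \otimes pA$ in Corollary \ref{b2.4} is defined by transporting the product of $J_{p}$ through $\phi$, so applying $\phi$ to $uv = \phi^{-1}\left(\phi(u)\phi(v)\right)$ gives multiplicativity, and together with the linear bijectivity from Theorem \ref{b2.3} this yields the algebra isomorphism. Nothing further is needed.
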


\section{The Wedderburn-Artin Theorem}

Let $\mathcal{P}$ be the class of projections generating the $J_{p}$ from Lemma \ref{b2.1}. By the direct sum $\oplus_{p \in \mathcal{P}} Ap \otimes pA$ we denote the subset of the Cartesian product $\times_{p \in \mathcal{P}}Ap \otimes pA$ consisting of all cross sections which are zero except at a finite number of elements of $\mathcal{P}$, equipped with pointwise scalar multiplication, addition and multiplication. From Lemma \ref{b2.1} and Corollary \ref{b2.5} we obtain the following theorem concerning the structure of the socle:

\begin{theorem}\label{b3.1}
$\mathrm{Soc}\:A \cong \oplus_{p \in \mathcal{P}} Ap \otimes pA$.
\end{theorem}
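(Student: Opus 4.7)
The plan is to define the map $\Phi : \oplus_{p \in \mathcal{P}} Ap \otimes pA \to \mathrm{Soc}\:A$ componentwise by using the algebra isomorphisms $\phi_p : Ap \otimes pA \to J_p$ provided by Corollary \ref{b2.5}, namely
$$\Phi\bigl((u_p)_{p \in \mathcal{P}}\bigr) = \sum_{p \in \mathcal{P}} \phi_p(u_p),$$
which is a finite sum by the definition of the direct sum. Linearity is immediate and surjectivity is just the statement of the decomposition part of Lemma \ref{b2.1}. So the content is in injectivity and multiplicativity, both of which hinge on the pairwise orthogonality $J_p J_q = \{0\}$ for $p \neq q$.

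For multiplicativity I would expand
$$\Phi(u)\Phi(v) = \Bigl(\sum_{p} \phi_p(u_p)\Bigr)\Bigl(\sum_{q} \phi_q(v_q)\Bigr) = \sum_{p,q} \phi_p(u_p)\phi_q(v_q),$$
use orthogonality to kill all off-diagonal terms (since $\phi_p(u_p) \in J_p$ and $\phi_q(v_q) \in J_q$), and then invoke the fact that each $\phi_p$ is itself an algebra isomorphism so that $\phi_p(u_p)\phi_p(v_p) = \phi_p(u_p v_p)$, where the product $u_p v_p$ is the one defined in Corollary \ref{b2.4}. The componentwise definition of multiplication in the direct sum then matches $\Phi(uv)$ exactly.

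The main obstacle is injectivity: I need to show that if $\sum_{k=1}^n a_k = 0$ with $a_k \in J_{p_k}$ for distinct $p_1,\dots,p_n \in \mathcal{P}$, then each $a_k = 0$. Multiplying by another $a_k$ or by $p_k$ only gives $a_k^2 = 0$ from the orthogonality, which is not enough. The trick is to use Von Neumann regularity of the socle: for each fixed $k$, pick $x_k \in A$ with $a_k x_k a_k = a_k$, and note that $a_k x_k \in J_{p_k}$ since $J_{p_k}$ is a two-sided ideal containing $a_k$. Left-multiplying the relation $\sum_j a_j = 0$ by $a_k x_k$ gives
$$0 = \sum_{j=1}^n (a_k x_k) a_j = (a_k x_k) a_k = a_k,$$
because the cross terms $(a_k x_k) a_j$ lie in $J_{p_k} J_{p_j} = \{0\}$ for $j \neq k$. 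This establishes injectivity, and together with the steps above completes the proof that $\Phi$ is an algebra isomorphism.
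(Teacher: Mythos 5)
Your proof is correct and follows exactly the route the paper intends: the paper offers no explicit proof of Theorem \ref{b3.1}, stating only that it follows from Lemma \ref{b2.1} and Corollary \ref{b2.5}, and your argument supplies precisely the missing verification. In particular, your use of Von Neumann regularity together with the orthogonality $J_{p}J_{q}=\left\{0\right\}$ to establish injectivity (i.e. uniqueness of the decomposition into the $J_{p}$) is the one genuinely non-routine detail the paper leaves implicit, and you handle it correctly.
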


The result above can be viewed as a generalization of the celebrated \emph{Wedderburn-Artin Theorem} \cite[Theorem 2.1.2]{aupetit1991primer}. Of course, any claim of generality should inherently contain the classical result in some or other form. We now proceed to show that Theorem \ref{b3.1} is indeed a generalized version of the Wedderburn-Artin Theorem. Noteworthy is that this approach not only yields further interesting consequences, but it also avoids the use of representation theory altogether, that is, we manage to prove the Wedderburn-Artin Theorem without the use of continuous irreducible representations of $A$. Firstly, however, a little preparation is needed:

\begin{lemma}\label{b3.2}
Let $p$ be a rank one projection of $A$ and let $S$ be a linearly independent subset of $Ap$ such that $p \in S$. Then
$$S' = \left\{ p\right\} \cup \left\{\left(\mathbf{1}-p\right)xp: x \in S-\left\{p\right\} \right\}$$
is a linearly independent subset of $Ap$ and $\mathrm{span}\;S = \mathrm{span}\;S'$. A similar result is true for $pA$.
\end{lemma}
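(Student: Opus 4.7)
The plan is to reduce $S'$ to a list of the form $\{p\} \cup \{x - \lambda_x p : x \in S - \{p\}\}$, which makes both linear independence and the span equality essentially automatic from the linear independence of $S$.

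First I would exploit the rank one hypothesis: because $p$ is a rank one projection, $pAp = \mathbb{C}p$. For any $x \in S - \{p\} \subseteq Ap$ we have $xp = x$, and furthermore $px = pxp \in pAp$, so there is a scalar $\lambda_x \in \mathbb{C}$ with $px = \lambda_x p$. This lets me rewrite
\[
(\mathbf{1} - p)xp = xp - pxp = x - \lambda_x p,
\]
so that
\[
S' = \{p\} \cup \{x - \lambda_x p : x \in S - \{p\}\}.
\]

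Next I would check linear independence. Suppose a finite linear combination vanishes:
\[
\alpha p + \sum_{x} \beta_x (x - \lambda_x p) = 0,
\]
which rearranges to
\[
\Bigl(\alpha - \sum_{x} \beta_x \lambda_x \Bigr) p + \sum_{x} \beta_x x = 0.
\]
Because $p$ together with the chosen $x$'s lie in $S$, which is linearly independent by hypothesis, every $\beta_x = 0$ and the coefficient of $p$ forces $\alpha = 0$. Hence $S'$ is linearly independent.

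For the span equality, $\operatorname{span} S' \subseteq \operatorname{span} S$ is immediate since each generator of $S'$ already belongs to $\operatorname{span} S$. For the reverse inclusion, $p \in S'$, and for each $x \in S - \{p\}$ the identity $x = (x - \lambda_x p) + \lambda_x p$ writes $x$ as a linear combination of elements of $S'$. The analogous statement for $pA$ is proved by the symmetric computation $p(\mathbf{1} - p) = 0$ and $yp = \mu_y p$ for $y \in pA$. The only step that requires a bit of care is the appeal to $pAp = \mathbb{C}p$, but since this is precisely what rank one of a projection means in the spectral framework of the excerpt, there is no real obstacle.
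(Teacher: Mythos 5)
Your proof is correct and follows essentially the same route as the paper: both arguments use minimality of the rank one projection ($pAp=\mathbb{C}p$) to write $(\mathbf{1}-p)xp = x - \lambda_x p$, then transfer linear independence and the span equality back to $S$ by absorbing the scalar multiples of $p$. No gaps.
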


\begin{proof}
Let $x_{2}, \ldots, x_{n} \in S-\left\{p\right\}$ be distinct. For the first part it will suffice to show that the set $\left\{p, \left(\mathbf{1}-p\right)x_{2}p, \ldots, \left(\mathbf{1}-p\right)x_{n}p  \right\}$ is linearly independent. To this end, suppose that
\begin{equation}
\lambda_{1}p + \lambda_{2}\left(\mathbf{1}-p\right)x_{2}p + \cdots + \lambda_{n}\left(\mathbf{1}-p\right)x_{n}p = 0
\label{beq6}
\end{equation}
for some $\lambda_{1}, \ldots, \lambda_{n} \in \mathbb{C}$. Since $p$ has rank one, it is minimal. Hence, for each $i \in \left\{2, \ldots, n \right\}$ we have that $px_{i}p = \alpha_{i}p$, where $\alpha_{i} \in \mathbb{C}$. Moreover, since $x_{i} \in Ap$, it follows that $x_{i}p = x_{i}$ for each $i \in \left\{2, \ldots, n \right\}$. Hence, (\ref{beq6}) becomes
$$\left(\lambda_{1} - \lambda_{2}\alpha_{2} - \cdots -\lambda_{n}\alpha_{n}\right)p + \lambda_{2}x_{2} + \cdots + \lambda_{n}x_{n} = 0.$$
Thus, since $\left\{p, x_{2}, \ldots, x_{n} \right\}$ is a linearly independent set, it readily follows that $\lambda_{i} = 0$ for each $i \in \left\{1, \ldots, n \right\}$. The second part follows from the minimality of $p$ and the observation that
$$\lambda_{1}p + \lambda_{2}x_{2} + \cdots + \lambda_{n}x_{n} = \left(\lambda_{1} + \sum_{j=2}^{n}\lambda_{j}\alpha_{j}\right)p + \lambda_{2}\left(\mathbf{1}-p\right)x_{2}p + \cdots + \lambda_{n}\left(\mathbf{1}-p\right)x_{n}p$$
and
$$\lambda_{1}p + \lambda_{2}\left(\mathbf{1}-p\right)x_{2}p + \cdots + \lambda_{n}\left(\mathbf{1}-p\right)x_{n}p = \left(\lambda_{1} - \sum_{j=2}^{n}\lambda_{j}\alpha_{j}\right)p + \lambda_{2}x_{2} + \cdots + \lambda_{n}x_{n}$$
for any $\lambda_{1}, \ldots, \lambda_{n} \in \mathbb{C}$, where $px_{j}p = \alpha_{j}p$ for each $j \in \left\{2, \ldots, n \right\}$. This gives the result.
\end{proof}

From \cite[pp. 155--157]{bonsall1973complete} we recall that associated to every rank one element, $a$, there exists a characteristic functional $\tau_{a} \in A'$ (the dual space of $A$) such that
$$axa = \tau_{a}\left(x\right) a \,\;\mathrm{for\;all}\;\, x \in A.$$
Observe that $\alpha\tau_{a} = \tau_{\alpha a}$ for all $\alpha \in \mathbb{C}-\left\{0 \right\}$. Moreover, by the density of $E(a)$ and the Diagonalization Theorem, there exist a minimal projection $p$ in $A$ and a $u \in G(A)$ such that $a=pu$. Thus, by Jacobson's Lemma, and the definitions of $\mathrm{Tr}$ and $\tau_{a}$, it readily follows that 
$$\mathrm{Tr}\,(ax) = \tau_{a} (x)\,\;\mathrm{for\;all}\;\, x \in A.$$

\begin{theorem}\label{b3.3}
Let $\left\{b, a_{1}, \ldots, a_{n}\right\}$ be a linearly independent set of rank one elements in $A$. Then there exists a $y \in A$ such that $\sigma (by) \neq \left\{0 \right\}$ and $\sigma \left(a_{i}y\right) = \left\{0 \right\}$ for each $i \in \left\{1, \ldots, n \right\}$.
\end{theorem}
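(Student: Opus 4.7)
The plan is to reduce the spectral statement to a linear-algebraic fact about the characteristic functionals and then invoke property (iii) of the trace.

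First I would exploit the fact that if $c$ is a rank-one element with characteristic functional $\tau_{c}$, then for any $y \in A$,
$$(cy)^{2} = c(yc)y = (cyc)y = \tau_{c}(y)\,cy,$$
so $cy(cy - \tau_{c}(y)\mathbf{1}) = 0$ and consequently $\sigma(cy) \subseteq \{0, \tau_{c}(y)\}$. Moreover, when $\tau_{c}(y) \neq 0$, the element $cy/\tau_{c}(y)$ is a nonzero idempotent, so $\tau_{c}(y) \in \sigma(cy)$. Combined with the identity $\tau_{c}(y) = \mathrm{Tr}(cy)$ recorded just before the statement, this gives the clean equivalence
$$\sigma(cy) = \{0\} \;\Longleftrightarrow\; \mathrm{Tr}(cy) = 0.$$
Applied to $b$ and to each $a_{i}$, the theorem is therefore equivalent to the existence of some $y \in A$ satisfying $\mathrm{Tr}(a_{i}y) = 0$ for every $i$ and $\mathrm{Tr}(by) \neq 0$.

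Next I would establish that $\tau_{b}$ is not a linear combination of $\tau_{a_{1}}, \ldots, \tau_{a_{n}}$ when viewed as linear functionals on $\mathrm{Soc}\,A$. Indeed, if $\tau_{b} = \sum_{i=1}^{n}\alpha_{i}\tau_{a_{i}}$ on $\mathrm{Soc}\,A$, then by the linearity of the trace (property (i)) and the identity $\tau_{c}(x) = \mathrm{Tr}(cx)$, one obtains
$$\mathrm{Tr}\!\left(\left(b - \sum_{i=1}^{n}\alpha_{i}a_{i}\right)x\right) = 0 \quad \text{for every } x \in \mathrm{Soc}\,A.$$
Since $b - \sum_{i}\alpha_{i}a_{i} \in \mathrm{Soc}\,A$, property (iii) forces $b = \sum_{i}\alpha_{i}a_{i}$, contradicting the assumed linear independence of $\{b, a_{1}, \ldots, a_{n}\}$. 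Hence $\tau_{b}$ does not lie in the span of $\tau_{a_{1}}, \ldots, \tau_{a_{n}}$.

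Finally I would invoke the elementary linear-algebra fact that if $f, f_{1}, \ldots, f_{n}$ are linear functionals on a vector space $V$ and $f \notin \mathrm{span}(f_{1}, \ldots, f_{n})$, then $f$ does not vanish on $\bigcap_{i}\ker f_{i}$; concretely, $f$ cannot factor through the map $v \mapsto (f_{1}(v), \ldots, f_{n}(v))$, for otherwise it would be a linear combination of the $f_{i}$. Applying this with $V = \mathrm{Soc}\,A$ furnishes a $y \in \mathrm{Soc}\,A \subseteq A$ with $\tau_{a_{i}}(y) = 0$ for each $i$ and $\tau_{b}(y) \neq 0$, which by the first paragraph is exactly the desired element. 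The only mildly delicate point in the whole argument is the application of property (iii) to deduce the linear independence of the characteristic functionals from that of the rank-one elements; everything else is a routine unpacking of definitions.
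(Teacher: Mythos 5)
Your proposal is correct and follows essentially the same route as the paper: both arguments hinge on the identity $\tau_{c}(y)=\mathrm{Tr}\,(cy)$, use property (iii) of the trace to show $\tau_{b}\notin\mathrm{span}\left\{\tau_{a_{1}},\ldots,\tau_{a_{n}}\right\}$, and then apply the standard kernel-intersection fact for linear functionals. The only differences are cosmetic --- you argue directly rather than by contradiction, and you spell out the equivalence $\sigma(cy)=\left\{0\right\}\Leftrightarrow\tau_{c}(y)=0$ that the paper leaves implicit.
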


\begin{proof}
For the sake of a contradiction, suppose that
\begin{equation}
\sigma \left(a_{i}x\right) = \left\{0 \right\} \;\,\mathrm{for\;all}\;\,i \in \left\{1, \ldots, n \right\} \Rightarrow \sigma (bx) = \left\{0 \right\}.
\label{beq10}
\end{equation}
Now, (\ref{beq10}) is clearly equivalent to
$$x \in \bigcap_{i =1}^{n} \mathrm{Ker}\:\tau_{a_{i}} \Rightarrow x \in \mathrm{Ker}\:\tau_{b},$$
which means that $\tau_{b}$ vanishes on $\bigcap_{i =1}^{n} \mathrm{Ker}\:\tau_{a_{i}}$ i.e. $\mathrm{Ker}\:\tau_{b} \supseteq \bigcap_{i =1}^{n} \mathrm{Ker}\:\tau_{a_{i}}$. Consequently, from linear algebra (see \cite[p. 10]{diestel1984sequences}) it follows that $\tau_{b} = \alpha_{1}\tau_{a_{1}} + \cdots +\alpha_{n}\tau_{a_{n}}$, where $\alpha_{1}, \ldots, \alpha_{n} \in \mathbb{C}$. Next, we let $x \in A$ be arbitrary, and consider
\begin{eqnarray*}
\mathrm{Tr}\,(bx) = \tau_{b} (x) & = & \left(\alpha_{1}\tau_{a_{1}} + \cdots +\alpha_{n}\tau_{a_{n}}\right)(x) \\
& = & \alpha_{1}\tau_{a_{1}}(x) + \cdots +\alpha_{n}\tau_{a_{n}}(x) \\
& = & \tau_{\alpha_{1}a_{1}}(x) + \cdots +\tau_{\alpha_{n}a_{n}}(x) \\
& = & \mathrm{Tr}\,\left(\alpha_{1}a_{1}x\right) + \cdots + \mathrm{Tr}\,\left(\alpha_{n}a_{n}x\right) \\
& = & \mathrm{Tr}\,\left(\alpha_{1}a_{1}x + \cdots +\alpha_{n}a_{n}x\right) \\
& = & \mathrm{Tr}\,\left(\left(\alpha_{1}a_{1} + \cdots +\alpha_{n}a_{n}\right)x\right).
\end{eqnarray*}
From this it now follows that
$$\mathrm{Tr}\,\left(\left(b-\sum_{i =1}^{n}\alpha_{i}a\right)x\right) = 0.$$
However, since $x \in A$ was arbitrary, it follows from property (iii) of trace (see Section 1) that $b = \sum_{i =1}^{n}\alpha_{i}a$. But this is absurd since $\left\{b, a_{1}, \ldots, a_{n}\right\}$ is linearly independent. We conclude that there is at least one $y \in A$ for which $\sigma (by) \neq \left\{0 \right\}$ and $\sigma \left(a_{i}y\right) = \left\{0 \right\}$ for each $i \in \left\{1, \ldots, n \right\}$. 
\end{proof}

\begin{lemma}\label{b3.4}
Let $p$ be a rank one projection of $A$ and let 
$$\left\{p, \left(\mathbf{1}-p\right)x_{2}p, \ldots,\left(\mathbf{1}-p\right)x_{n}p  \right\}$$
be a linearly independent subset of $Ap$. For each $j \in \left\{2, \ldots, n \right\}$, there exists a $y_{j} \in A$ such that $1 \in \sigma\left(\left(\mathbf{1}-p\right)x_{j}py_{j} \right)$ and $\sigma\left(\left(\mathbf{1}-p\right)x_{i}py_{j} \right) = \left\{0 \right\}$ for $i \neq j$. A similar result holds true if we start with a linearly independent subset of $pA$. 
\end{lemma}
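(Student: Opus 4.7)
The plan is to reduce this to a direct application of Theorem \ref{b3.3} after verifying that we have the right linearly independent set of rank one elements.

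First, I would observe that each element of the given linearly independent set is a rank one element of $A$. The projection $p$ is rank one by hypothesis. For each $i \in \{2,\dots,n\}$, the element $(\mathbf{1}-p)x_{i}p$ lies in $Ap$, and I would verify it has rank one as follows: for any $y \in A$, repeated applications of Jacobson's Lemma give $\sigma'\!\left(y\cdot(\mathbf{1}-p)x_{i}p\right) = \sigma'\!\left(p y (\mathbf{1}-p)x_{i}p\right)$, which is a spectrum of an element of $pAp = \mathbb{C}p$ and hence contains at most one nonzero point. So $\mathrm{rank}((\mathbf{1}-p)x_{i}p)\le 1$, and linear independence forces it to be nonzero, so by semisimplicity the rank is exactly one.

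Next, fix $j \in \{2,\dots,n\}$ and set
$$b := (\mathbf{1}-p)x_{j}p, \qquad \{a_{1},\dots,a_{n-1}\} := \{p\}\cup\bigl\{(\mathbf{1}-p)x_{i}p : i\in\{2,\dots,n\},\, i\neq j\bigr\}.$$
By the previous step, these are rank one elements, and linear independence of the original set is inherited by $\{b,a_{1},\dots,a_{n-1}\}$. Applying Theorem \ref{b3.3} produces a $y\in A$ with $\sigma(by)\neq\{0\}$ and $\sigma(a_{k}y)=\{0\}$ for every $k$.

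Finally, I would pick $\lambda \in \sigma(by)\setminus\{0\}$ and rescale: set $y_{j} := \lambda^{-1}y$. Then $1\in \sigma\!\left((\mathbf{1}-p)x_{j}py_{j}\right)$ by the spectral mapping property of scalar multiplication, while $\sigma\!\left((\mathbf{1}-p)x_{i}py_{j}\right)=\lambda^{-1}\sigma\!\left((\mathbf{1}-p)x_{i}py\right)=\{0\}$ for each $i\neq j$ in $\{2,\dots,n\}$, as required. The analogous statement for a linearly independent subset of $pA$ follows by the symmetric version of Theorem \ref{b3.3} (multiplying on the left instead of the right), which is proved in exactly the same way.

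The main obstacle I anticipate is the rank one verification for $(\mathbf{1}-p)x_{i}p$; everything after that is essentially bookkeeping. The Jacobson's Lemma juggling used there mirrors the kind of manipulations already appearing in the proof of Lemma \ref{b2.2}, so the argument is short but needs to be written out carefully.
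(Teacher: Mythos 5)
Your proposal is correct and follows the same route as the paper, which simply cites Theorem \ref{b3.3} together with the Spectral Mapping Theorem; you have supplied the details the paper leaves implicit (the Jacobson's Lemma computation showing each $\left(\mathbf{1}-p\right)x_{i}p$ has rank one, and the rescaling of $y$ to move a nonzero spectral value to $1$), and all of these check out.
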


\begin{proof}
This is an immediate consequence of Theorem \ref{b3.3} and the Spectral Mapping Theorem \cite[Theorem 3.3.3]{aupetit1991primer}.
\end{proof}

\begin{lemma}\label{b3.5}
Let $\left\{y_{2}, \ldots, y_{n} \right\}$ be the set obtained in the conclusion of Lemma \textnormal{\ref{b3.4}}. Then $\left\{p, py_{2}\left(\mathbf{1}-p\right), \ldots,py_{n}\left(\mathbf{1}-p\right)  \right\}$ is a linearly independent subset of $pA$. A similar result holds true if we start with a linearly independent subset of $pA$ in Lemma \textnormal{\ref{b3.4}}.
\end{lemma}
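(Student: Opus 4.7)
The plan is to prove linear independence by the standard trick of testing a vanishing linear combination against a family of auxiliary elements that separate the summands. I would suppose
\[
\lambda_{1} p + \lambda_{2} p y_{2}(\mathbf{1}-p) + \cdots + \lambda_{n} p y_{n}(\mathbf{1}-p) = 0,
\]
and then multiply this relation on the right by $(\mathbf{1}-p) x_{i} p$ for each $i \in \{2, \ldots, n\}$. The $\lambda_{1}$-term is killed immediately since $p(\mathbf{1}-p) = 0$, and each surviving product $p y_{j}(\mathbf{1}-p) x_{i} p$ lies in $pAp$. By the minimality of $p$, this means
\[
p y_{j}(\mathbf{1}-p) x_{i} p = \beta_{ji}\, p \qquad (\beta_{ji} \in \mathbb{C}),
\]
so the problem reduces to showing that the matrix $(\beta_{ji})$ has nonzero diagonal and vanishing off-diagonal entries.

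The next step is to identify the scalars $\beta_{ji}$. Taking the trace of the equation above and using properties (i) and (ii) of Section 1, together with $\mathrm{Tr}(p) = 1$, gives $\beta_{ji} = \mathrm{Tr}(p y_{j}(\mathbf{1}-p) x_{i}) = \mathrm{Tr}((\mathbf{1}-p) x_{i} p y_{j})$. For $i \neq j$, Lemma \ref{b3.4} ensures $\sigma((\mathbf{1}-p) x_{i} p y_{j}) = \{0\}$, so $\beta_{ji} = 0$. For $i = j$, I would invoke Jacobson's lemma directly: with $u = p y_{j}$ and $v = (\mathbf{1}-p) x_{j} p$ we have $uv = \beta_{jj} p$ and $vu = (\mathbf{1}-p) x_{j} p y_{j}$, hence $\sigma'(\beta_{jj} p) = \sigma'((\mathbf{1}-p) x_{j} p y_{j}) \ni 1$, which forces $\beta_{jj} \neq 0$ (indeed $\beta_{jj} = 1$).

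Combining these, each right-multiplication by $(\mathbf{1}-p) x_{i} p$ yields $\lambda_{i} \beta_{ii} p = 0$, hence $\lambda_{i} = 0$ for every $i \in \{2, \ldots, n\}$. Substituting back leaves $\lambda_{1} p = 0$, and since $p \neq 0$ we conclude $\lambda_{1} = 0$. The symmetric assertion, starting from a linearly independent subset of $pA$ in Lemma \ref{b3.4}, follows by the same argument with the roles of left and right interchanged.

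The only genuine obstacle is recognizing that the right test elements are $(\mathbf{1}-p) x_{i} p$ themselves: this is exactly the choice that converts the spectral data packaged in Lemma \ref{b3.4} into scalar data in $pAp$, via Jacobson's lemma and the minimality of $p$, and thereby drives the linear independence argument. Once that reduction is made, the computation is routine.
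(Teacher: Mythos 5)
Your proof is correct and follows essentially the same route as the paper: multiply the vanishing linear combination on the right by $(\mathbf{1}-p)x_{i}p$ and use the minimality of $p$ together with Jacobson's Lemma to show $py_{j}(\mathbf{1}-p)x_{i}p=\delta_{ij}\,p$, which isolates each $\lambda_{i}$. The only cosmetic differences are that the paper first reduces via Lemma~\ref{b3.2} to the set $\left\{p,py_{2},\ldots,py_{n}\right\}$ and handles the off-diagonal vanishing by Jacobson's Lemma rather than by your (equally valid) trace computation.
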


\begin{proof}
By Lemma \ref{b3.2} it suffices to show that the set $\left\{p, py_{2}, \ldots, py_{n} \right\}$ is linearly independent in $pA$: Firstly, note that $py_{j} \neq 0$ for each $j \in \left\{2, \ldots, n\right\}$, for otherwise we obtain a contradiction with the fact that $1 \in \sigma\left(\left(\mathbf{1}-p\right)x_{j}py_{j} \right)$ for each $j \in \left\{2, \ldots, n\right\}$. Secondly, by Jacobson's Lemma and the minimality of $p$ it readily follows that $py_{j}\left(\mathbf{1}-p\right)x_{j}p = p$ for each $j \in \left\{2, \ldots, n\right\}$, and, moreover, that $py_{i}\left(\mathbf{1}-p\right)x_{j}p = 0$ for $i \neq j$. Now, suppose that
\begin{equation}
\lambda_{1}p + \lambda_{2}py_{2} + \cdots + \lambda_{n}py_{n} = 0
\label{beq7}
\end{equation}
for some $\lambda_{1}, \ldots, \lambda_{n} \in \mathbb{C}$. Let $j \in \left\{2, \ldots, n\right\}$ be arbitrary but fixed. Multiplying both sides of (\ref{beq7}) on the right by $\left(\mathbf{1}-p\right)x_{j}p$ yields $\lambda_{j}p = 0$. Hence, $\lambda_{j} = 0$ for each $j \in \left\{2, \ldots, n\right\}$. Thus, (\ref{beq7}) becomes $\lambda_{1}p = 0$ from which we get $\lambda_{1} = 0$. Therefore, $\left\{p, py_{2}, \ldots, py_{n} \right\}$ is a linearly independent set as desired.
\end{proof}

\begin{lemma}\label{b3.6}
Let $p$ be a rank one projection of $A$. Then $\mathrm{dim}\:Ap = \mathrm{dim}\:pA$.
\end{lemma}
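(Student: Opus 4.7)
The plan is to chain together Lemmas \ref{b3.2}, \ref{b3.4}, and \ref{b3.5}: each of these has a symmetric ``$pA$-version'', so we will get both inequalities $\dim Ap \leq \dim pA$ and $\dim pA \leq \dim Ap$ by the same argument, done in opposite directions.

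First I would show $\dim pA \geq \dim Ap$. Take any finite linearly independent subset $\{v_1, \ldots, v_n\} \subseteq Ap$; it suffices to exhibit $n$ linearly independent elements in $pA$. The first small step is to replace this by a linearly independent subset of $Ap$ of size at least $n$ that contains $p$ itself: if $p \notin \mathrm{span}\{v_1,\ldots,v_n\}$, simply adjoin $p$; otherwise write $p$ as a combination of the $v_i$'s and swap $p$ for some $v_i$ appearing with nonzero coefficient. Having arranged $p$ to be in the independent set, apply Lemma \ref{b3.2} to replace the remaining vectors by elements of the form $(\mathbf{1}-p)x_j p$, preserving both independence and cardinality.

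Next, feed this normalized set $\{p, (\mathbf{1}-p)x_2 p, \ldots, (\mathbf{1}-p)x_n p\}$ into Lemma \ref{b3.4} to produce $y_2, \ldots, y_n \in A$ satisfying the spectral biorthogonality condition, and then apply Lemma \ref{b3.5} to conclude that $\{p, p y_2 (\mathbf{1}-p), \ldots, p y_n (\mathbf{1}-p)\}$ is a linearly independent subset of $pA$ of size $n$. This yields $\dim pA \geq n$, and since $n$ was arbitrary, $\dim pA \geq \dim Ap$ (including the case where $\dim Ap = \infty$, in which $n$ can be made arbitrarily large).

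The reverse inequality $\dim Ap \geq \dim pA$ is obtained by running the same argument on the $pA$-side, using the ``similar result'' clauses stated in Lemmas \ref{b3.2}, \ref{b3.4}, and \ref{b3.5}. Combining the two inequalities gives the claim. I do not expect any real obstacle: the substance was already assembled in the three preparatory lemmas, and the only mildly delicate point is the trivial bookkeeping step of ensuring the initial independent set can be assumed to contain $p$ before Lemma \ref{b3.2} is applied.
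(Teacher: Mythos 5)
Your proposal is correct and follows essentially the same route as the paper, which likewise deduces both inequalities by contradiction from Lemmas \ref{b3.2} and \ref{b3.5} (the paper's proof is just a two-line sketch of the construction you spell out). The only addition is your explicit exchange step to get $p$ into the initial independent set, a detail the paper leaves implicit but which is handled correctly here.
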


\begin{proof}
If $\mathrm{dim}\:pA > \mathrm{dim}\:Ap$ or $\mathrm{dim}\:Ap > \mathrm{dim}\:pA$, then in either case we may use Lemma \ref{b3.2} and Lemma \ref{b3.5} to construct a linearly independent set of sufficiently large cardinality in $Ap$ or $pA$, respectively, to produce a contradiction. Hence, it must be the case that $\mathrm{dim}\:pA \leq \mathrm{dim}\:Ap$ and $\mathrm{dim}\:Ap \leq \mathrm{dim}\:pA$, establishing the result. 
\end{proof}

\begin{theorem}\label{b3.7}
Let $p$ be a rank one projection of $A$ and suppose that either $\mathrm{dim}\:Ap$ or $\mathrm{dim}\:pA$ is finite. Then there exist a basis $\left\{p, u_{2}, \ldots, u_{n} \right\}$ of $Ap$ and a basis $\left\{p, v_{2}, \ldots, v_{n} \right\}$ of $pA$ such that the following properties hold:
\begin{itemize}
\item[\textnormal{(i)}]
$pu_{i}=v_{i}p=0$ and $u_{i}^{2} = v_{i}^{2} = 0$ for each $i \in \left\{2, \ldots, n \right\}$.
\item[\textnormal{(ii)}]
$u_{i}p = u_{i}$ and $pv_{i} = v_{i}$ for each $i \in \left\{2, \ldots, n \right\}$.
\item[\textnormal{(iii)}]
$v_{i}u_{i}=p$ for each $i \in \left\{2, \ldots, n \right\}$ and $v_{i}u_{j} = 0$ for $i \neq j$.
\end{itemize}
\end{theorem}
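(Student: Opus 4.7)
The plan is to manufacture the required bases directly from the tools developed in Lemmas \ref{b3.2}, \ref{b3.4}, and \ref{b3.5}. By Lemma \ref{b3.6}, $\dim Ap = \dim pA =: n$, which is finite under the hypothesis. Start with any basis of $Ap$ containing $p$, and then invoke Lemma \ref{b3.2} to replace it by a basis of the form $\{p, (\mathbf{1}-p)x_{2}p, \ldots, (\mathbf{1}-p)x_{n}p\}$. Define $u_{i} = (\mathbf{1}-p)x_{i}p$ for $i \in \{2, \ldots, n\}$. Properties (i) and (ii) for the $u_{i}$ are immediate: $pu_{i}=p(\mathbf{1}-p)x_{i}p=0$, $u_{i}p = (\mathbf{1}-p)x_{i}p = u_{i}$ because $p$ is idempotent, and $u_{i}^{2} = (\mathbf{1}-p)x_{i}p(\mathbf{1}-p)x_{i}p = 0$ since $p(\mathbf{1}-p)=0$.

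Next I apply Lemma \ref{b3.4} to the linearly independent set $\{p, u_{2}, \ldots, u_{n}\}$ to produce $y_{2}, \ldots, y_{n} \in A$ with $1 \in \sigma(u_{j}y_{j})$ and $\sigma(u_{i}y_{j}) = \{0\}$ whenever $i \neq j$. Set $v_{j} = py_{j}(\mathbf{1}-p)$. By Lemma \ref{b3.5}, $\{p, v_{2}, \ldots, v_{n}\}$ is linearly independent in $pA$, and since $\dim pA = n$ it is a basis. Properties (i) and (ii) for the $v_{j}$ follow by the same direct calculation as for the $u_{i}$: $v_{j}p=0$, $pv_{j}=v_{j}$, and $v_{j}^{2}=0$.

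The substantive point is property (iii). Because $(\mathbf{1}-p)^{2} = \mathbf{1}-p$, one has
$$v_{i}u_{j} = py_{i}(\mathbf{1}-p)x_{j}p$$
for all $i, j \in \{2, \ldots, n\}$. Since $p$ is a rank one and hence minimal projection, $py_{i}(\mathbf{1}-p)x_{j}p = \lambda_{ij}p$ for some $\lambda_{ij} \in \mathbb{C}$. To determine $\lambda_{ij}$, I appeal to Jacobson's Lemma, which gives
$$\sigma\bigl(py_{i}(\mathbf{1}-p)x_{j}p\bigr) = \sigma\bigl((\mathbf{1}-p)x_{j}py_{i}\bigr) = \sigma\bigl(u_{j}y_{i}\bigr).$$
For $i=j$ this set contains $1$ by the choice of $y_{i}$ in Lemma \ref{b3.4}, forcing $\lambda_{ii}=1$ and therefore $v_{i}u_{i}=p$. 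For $i \neq j$ this set equals $\{0\}$, forcing $\lambda_{ij}=0$ and therefore $v_{i}u_{j}=0$.

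The only step requiring care is the bookkeeping in (iii); everything else is a direct consequence of the structural lemmas preceding the theorem. I do not anticipate any additional obstacle, since the minimality of $p$ and Jacobson's Lemma together convert the spectral information supplied by Lemma \ref{b3.4} into the exact scalar identities needed.
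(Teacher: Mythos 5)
Your proposal is correct and follows essentially the same route as the paper: pass to a basis of the form $\{p,(\mathbf{1}-p)x_ip\}$ via Lemma \ref{b3.2}, produce the dual family via Lemmas \ref{b3.4} and \ref{b3.5}, and extract the scalar identities in (iii) from the minimality of $p$ together with Jacobson's Lemma. The only (harmless) difference is that you spell out the Jacobson's Lemma computation that the paper merely recalls from the proof of Lemma \ref{b3.5}.
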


\begin{proof}
From Lemma \ref{b3.6} it follows that $\mathrm{dim}\:Ap = \mathrm{dim}\:pA$. If $\mathrm{dim}\:Ap = 1$, then $\left\{p \right\}$ is a basis for $Ap$ and for $pA$. However, then the result is trivially true. So assume that $\mathrm{dim}\:Ap \geq 2$ and let $\left\{p, x_{2}, \ldots, x_{n} \right\}$ be any basis for $Ap$. From Lemma \ref{b3.2}  it follows that $\left\{p, \left(\mathbf{1}-p\right)x_{2}p, \ldots, \left(\mathbf{1}-p\right)x_{n}p \right\}$ is a basis for $Ap$. Let $\left\{p, py_{2}\left(\mathbf{1}-p\right), \ldots, py_{n}\left(\mathbf{1}-p\right) \right\}$ be the basis for $pA$ constructed in Lemma \ref{b3.4} and Lemma \ref{b3.5} using the aforementioned basis for $Ap$. Recall that Jacobson's Lemma and the minimality of $p$ gives that $py_{i}\left(\mathbf{1}- p\right)x_{i}p = p$ for each $i \in \left\{2, \ldots, n \right\}$ and that $py_{i}\left(\mathbf{1}- p\right)x_{j}p = 0$ for $i \neq j$. Consequently, if we take $u_{i} = \left(\mathbf{1}- p\right)x_{i}p$ and $v_{i} = py_{i}\left(\mathbf{1}- p\right)$ for each $i \in \left\{1, \ldots, n \right\}$, then a moment's investigation shows that $\left\{p, u_{2}, \ldots, u_{n} \right\}$ and $\left\{p, v_{2}, \ldots, v_{n} \right\}$ satisfy properties (i) to (iii).
\end{proof}

\begin{lemma}\label{b3.8}
Let $p$ be a rank one projection of $A$ and suppose that either $\mathrm{dim}\:Ap$ or $\mathrm{dim}\:pA$ is finite. Then $J_{p} \cong M_{n}\left(\mathbb{C}\right)$.
\end{lemma}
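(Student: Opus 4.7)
The plan is to exhibit an explicit set of matrix units inside $J_p$ using the dual bases from Theorem \ref{b3.7}, and then show they form a linear basis of $J_p$ satisfying the multiplication rules of $M_n(\mathbb{C})$.

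First, I would set $u_1 = v_1 = p$ and observe that the three properties in Theorem \ref{b3.7} then extend cleanly to the whole range $i \in \{1, \ldots, n\}$: we have $u_i p = u_i$ and $p v_i = v_i$ for all $i$, and the ``dual'' orthogonality sharpens to
\[
v_i u_j = \delta_{ij}\, p \qquad (i,j \in \{1,\ldots,n\}),
\]
where the cases involving $i=1$ or $j=1$ (namely $p u_j = 0$ for $j \geq 2$, $v_i p = 0$ for $i \geq 2$, and $p\cdot p = p$) are built directly into property (i).

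Next, I would define $e_{ij} := u_i v_j$ for $i,j \in \{1,\ldots,n\}$, giving $n^2$ elements of $J_p$. A one-line computation then gives the matrix-unit relations:
\[
e_{ij} e_{kl} = u_i v_j u_k v_l = \delta_{jk}\, u_i p v_l = \delta_{jk}\, u_i v_l = \delta_{jk}\, e_{il}.
\]
Linear independence of $\{e_{ij}\}$ follows by the standard trick: if $\sum_{i,j} \lambda_{ij} e_{ij} = 0$, then multiplying on the left by $v_k$ and on the right by $u_l$ collapses the sum to $\lambda_{kl}\, p = 0$, forcing $\lambda_{kl} = 0$.

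The only step that needs a small argument is spanning. Here I would use that an arbitrary element of $J_p$ is a finite sum of terms of the form $xpy$ with $x,y \in A$. Since $xp \in Ap$ and $py \in pA$, I expand them in the bases of Theorem \ref{b3.7} as $xp = \sum_i \alpha_i u_i$ and $py = \sum_j \beta_j v_j$, and use $p^2 = p$ to factor
\[
xpy = (xp)(py) = \sum_{i,j} \alpha_i \beta_j\, u_i v_j = \sum_{i,j} \alpha_i \beta_j\, e_{ij}.
\]
Hence $\{e_{ij} : 1 \leq i,j \leq n\}$ is a basis of $J_p$ with the $M_n(\mathbb{C})$ multiplication table, and the linear map $\Phi : M_n(\mathbb{C}) \to J_p$ sending the standard matrix unit $E_{ij}$ to $e_{ij}$ is the desired algebra isomorphism.

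There is no real obstacle here beyond correctly bookkeeping the $i=1$ case; the substance has all been absorbed into Theorem \ref{b3.7}. The only subtlety worth double-checking is that the decomposition $xpy = (xp)(py)$ gives a bona fide element of the span (which is immediate from $p = p^2$), so that the counting dimension $\dim J_p = n^2$ drops out as a free bonus of the construction.
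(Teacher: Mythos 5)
Your proof is correct, and it arrives at the same matrix units as the paper: after the identifications $u_ip=u_i$ and $pv_j=v_j$, your $e_{ij}=u_iv_j$ are exactly the basis elements $p$, $u_ip$, $pv_j$, $u_ipv_j$ used in the paper's proof, and your map $\Phi$ is the inverse of the paper's $\phi$. The one genuine difference is how the basis property of $\{e_{ij}\}$ is established. The paper routes this through the tensor product: it invokes Corollary \ref{b2.5} to get $J_p\cong Ap\otimes pA$ and then cites a lemma of Bonsall--Duncan to conclude that products of basis elements of $Ap$ and $pA$ form a basis of the tensor product, hence of $J_p$. You instead verify linear independence and spanning directly --- independence by sandwiching a vanishing combination between $v_k$ and $u_l$ and using $v_iu_j=\delta_{ij}p$, spanning by expanding $xp$ and $py$ in the two bases and factoring $xpy=(xp)(py)$ via $p=p^2$. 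Your version is more elementary and self-contained (it makes Section 2's tensor-product machinery unnecessary for this lemma), at the cost of redoing by hand what the cited tensor-product lemma packages up; the paper's version buys uniformity with Theorem \ref{b2.3} and Corollary \ref{b2.5}, which it needs elsewhere anyway. Your extension of properties (i)--(iii) of Theorem \ref{b3.7} to the index $1$ (i.e.\ $v_iu_j=\delta_{ij}p$ for all $i,j\in\{1,\dots,n\}$) is checked correctly, and your argument degenerates gracefully to the case $\dim Ap=1$, which the paper treats separately.
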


\begin{proof}
From Lemma \ref{b3.6} it follows that $\mathrm{dim}\:Ap = \mathrm{dim}\:pA$. If $\mathrm{dim}\:Ap = 1$, then $J_{p} = \mathbb{C}p \cong \mathbb{C}$ and we are done. So assume that $\mathrm{dim}\:Ap \geq 2$. By Theorem \ref{b3.7} we can find a basis $\left\{p, u_{2}, \ldots, u_{n} \right\}$ of $Ap$ and a basis $\left\{p, v_{2}, \ldots, v_{n} \right\}$ of $pA$ satisfying properties (i) to (iii) as listed there. From Corollary \ref{b2.5} it follows that $J_{p} \cong Ap \otimes pA$. Moreover, by \cite[Lemma 42.5]{bonsall1973complete} it follows that
$$\left\{a \otimes b: a \in \left\{p, u_{2}, \ldots, u_{n} \right\}, b \in \left\{p, v_{2}, \ldots, v_{n} \right\} \right\}$$
is a basis for $Ap \otimes pA$. Hence,
$$\left\{p \right\} \cup \left\{u_{i}pv_{j}, u_{i}p, pv_{j}: i, j \in \left\{2, \ldots, n\right\}  \right\}$$
is a basis for $J_{p}$. Let $e_{i, j}$ be the $n \times n$ matrix with $1$ in its $\left(i, j\right)$-entry and $0$ everywhere else. Then, in particular, $\left\{e_{i, j} : i, j \in \left\{1, \ldots, n\right\} \right\}$ is a basis for $M_{n}\left(\mathbb{C}\right)$. We define a linear mapping $\phi : J_{p} \rightarrow M_{n}\left(\mathbb{C}\right)$ in terms of basis elements as follows: $\phi (p) = e_{1, 1}$, $\phi \left(u_{i}p\right) = e_{i, 1}$, $\phi \left(pv_{j}\right) = e_{1, j}$ and $\phi \left(u_{i}pv_{j}\right) = e_{i, j}$ for each $i, j \in \left\{2, \ldots, n\right\}$. It is not hard to show that $\phi$ is bijective. Moreover, by properties (i) to (iii) in Theorem \ref{b3.7} it readily follows that $\phi$ is multiplicative. This gives the result. 
\end{proof}

\begin{theorem}\label{b3.9}
Let $\mathrm{Soc}\:A$ be finite-dimensional. Then 
$$\mathrm{Soc}\:A \cong M_{n_{1}}\left(\mathbb{C}\right) \oplus \cdots \oplus M_{n_{k}}\left(\mathbb{C}\right).$$
\end{theorem}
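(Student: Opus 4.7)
The plan is to combine the structural decomposition Theorem \ref{b3.1} with the finite-dimensional identification in Lemma \ref{b3.8}. Since Theorem \ref{b3.1} already gives $\mathrm{Soc}\:A \cong \oplus_{p \in \mathcal{P}} Ap \otimes pA$, and since Corollary \ref{b2.5} gives $Ap \otimes pA \cong J_p$, it suffices to show that $\mathcal{P}$ is finite and that each $J_p$ (for $p \in \mathcal{P}$) is isomorphic to some matrix algebra $M_{n_i}(\mathbb{C})$.

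First I would argue that $\mathcal{P}$ must be finite. By Lemma \ref{b2.1} the two-sided ideals $J_p$ are pairwise orthogonal, and since each $p \in J_p$, this forces $pq = qp = 0$ for distinct $p, q \in \mathcal{P}$. Thus $\mathcal{P}$ is a set of pairwise orthogonal nonzero rank one projections. Such a set is linearly independent in $\mathrm{Soc}\:A$ (a standard consequence of orthogonality and the fact that each $p \neq 0$). Therefore $|\mathcal{P}| \leq \dim \mathrm{Soc}\:A < \infty$, so we may write $\mathcal{P} = \{p_{1}, \ldots, p_{k}\}$.

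Next, for each $i \in \{1, \ldots, k\}$, the left ideal $Ap_i$ is contained in $\mathrm{Soc}\:A$ (since it is a sum of rank one elements, or alternatively since $Ap_i \subseteq J_{p_i} \subseteq \mathrm{Soc}\:A$). Hence $\dim Ap_i \leq \dim \mathrm{Soc}\:A < \infty$, so the hypothesis of Lemma \ref{b3.8} is satisfied for each $p_i$. Applying that lemma yields $J_{p_i} \cong M_{n_i}(\mathbb{C})$ for some positive integer $n_i$.

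Finally, chaining the isomorphisms, $\mathrm{Soc}\:A \cong \oplus_{i=1}^{k} Ap_i \otimes p_iA \cong \oplus_{i=1}^{k} J_{p_i} \cong M_{n_1}(\mathbb{C}) \oplus \cdots \oplus M_{n_k}(\mathbb{C})$, as desired. I do not anticipate a serious obstacle here: the only delicate point is the finiteness of $\mathcal{P}$, which rests on the orthogonality statement in Lemma \ref{b2.1} together with the elementary observation that pairwise orthogonal nonzero idempotents are linearly independent. Everything else is an immediate assembly of results already proved in the preceding section.
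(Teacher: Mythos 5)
Your proof is correct, and the overall architecture (finiteness of $\mathcal{P}$, then Lemma \ref{b3.8} applied to each $J_{p_i}$, then assembling the orthogonal direct sum) matches the paper. The one place where you genuinely diverge is the finiteness of $\mathcal{P}$. You argue that distinct $p,q\in\mathcal{P}$ satisfy $pq=qp=0$ because $pq\in J_pJ_q=\{0\}$, that pairwise orthogonal nonzero idempotents are linearly independent, and hence $\#\mathcal{P}\leq \dim \mathrm{Soc}\:A<\infty$. This is airtight and entirely elementary. The paper instead invokes the spectral bound $\left|\mathrm{Tr}\,(a)\right|\leq c\cdot\rho(a)$ from \cite[Theorem 2.2]{tracesocleident}: if $\mathcal{P}$ were infinite one could sum $n>c$ orthogonal rank one projections into a projection $q$ with $\left|\mathrm{Tr}\,(q)\right|=n>c=c\cdot\rho(q)$, a contradiction. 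Your route avoids the external trace estimate altogether and uses the finite-dimensionality hypothesis directly, which is arguably cleaner here; the paper's route illustrates how the trace inequality controls the number of orthogonal minimal projections, a technique that does not presuppose a bound on $\dim\mathrm{Soc}\:A$ as such. The remaining steps --- $Ap_i\subseteq J_{p_i}\subseteq\mathrm{Soc}\:A$ giving $\dim Ap_i<\infty$ so that Lemma \ref{b3.8} applies, and the identification $\mathrm{Soc}\:A\cong J_{p_1}\oplus\cdots\oplus J_{p_k}$ via pairwise orthogonality --- coincide with the paper's.
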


\begin{proof}
By \cite[Theorem 2.2]{tracesocleident} there exists a $c > 0$ such that
\begin{equation}
\left|\mathrm{Tr}\,(a)\right| \leq c \cdot \rho (a)
\label{beq8}
\end{equation}
for all $a \in \mathrm{Soc}\:A$. Suppose that the collection of two-sided ideals $\left\{J_{p} : p \in \mathcal{P} \right\}$, which exists by Lemma \ref{b2.1}, contains infinitely many elements. However, then $\mathcal{P}$ contains a subset of $n$ distinct pairwise orthogonal rank one projections, say $\left\{q_{1}, \ldots, q_{n} \right\}$, where $n > c$. By the orthogonality of the $q_{i}$, it follows that $q = q_{1} + \cdots +q_{n}$ is also a projection. But then
$$\left|\mathrm{Tr}\,(q)\right| = n > c = c \cdot 1 = c \cdot \rho (q),$$
contradicting (\ref{beq8}). We may therefore conclude that $\mathcal{P}$ is a finite set, say $\mathcal{P} = \left\{p_{1}, \ldots, p_{k} \right\}$. Since the two-sided ideals are pairwise orthogonal, it follows that
$$\mathrm{Soc}\:A \cong J_{p_{1}} \oplus \cdots \oplus J_{p_{k}}.$$
But by Lemma \ref{b3.8} it follows that $J_{p_{i}} \cong M_{n_{i}} \left(\mathbb{C}\right)$ for each $i \in \left\{1, \ldots, k \right\}$. This yields the result. 
\end{proof}

From results in \cite{puhltrace} it is easy to deduce that $\mathrm{Soc}\:A = A$ if and only if $A$ is finite-dimensional. Hence, Theorem \ref{b3.9} readily gives the following:

\begin{corollary}\label{b3.10} \textnormal{(Wedderburn-Artin.)}\\
Let $A$ be finite-dimensional. Then 
$$A \cong M_{n_{1}}\left(\mathbb{C}\right) \oplus \cdots \oplus M_{n_{k}}\left(\mathbb{C}\right).$$
\end{corollary}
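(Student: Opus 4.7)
The plan is essentially to invoke the structural result for the socle, Theorem \ref{b3.9}, after observing that in the finite-dimensional setting the socle fills up the whole algebra. First, I would appeal to the result from \cite{puhltrace} cited in the paragraph immediately preceding the corollary, which asserts that $\mathrm{Soc}\:A = A$ precisely when $A$ is finite-dimensional; under the hypothesis that $\dim A < \infty$ this gives $\mathrm{Soc}\:A = A$ directly. Intuitively, every element of a finite-dimensional semisimple Banach algebra is a finite-rank element, which is why the socle swallows the algebra; this is where the semisimplicity assumption standing throughout the paper is quietly used.

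Next, I would apply Theorem \ref{b3.9} to $\mathrm{Soc}\:A$. That theorem says that whenever the socle is finite-dimensional, there exist positive integers $n_{1}, \ldots, n_{k}$ with
\[
\mathrm{Soc}\:A \cong M_{n_{1}}(\mathbb{C}) \oplus \cdots \oplus M_{n_{k}}(\mathbb{C}).
\]
Substituting $\mathrm{Soc}\:A = A$ on the left-hand side produces the desired isomorphism for $A$ itself, which is exactly the classical Wedderburn-Artin decomposition.

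There is no genuine obstacle to overcome here: the corollary is a one-line specialization of Theorem \ref{b3.9}, with all the work having already been done in building up Lemma \ref{b2.1}, Theorem \ref{b3.7} and Lemma \ref{b3.8}. The only point that requires any external input is the equivalence $\mathrm{Soc}\:A = A \iff \dim A < \infty$, which is imported from \cite{puhltrace}; everything else is internal to the machinery already developed. Accordingly, my write-up would be a two-sentence proof that (i) uses \cite{puhltrace} to identify $A$ with $\mathrm{Soc}\:A$, and (ii) quotes Theorem \ref{b3.9} to finish.
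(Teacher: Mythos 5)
Your proposal is correct and follows exactly the paper's own route: the paper likewise cites \cite{puhltrace} for the equivalence $\mathrm{Soc}\:A = A \iff \dim A < \infty$ and then deduces the corollary immediately from Theorem \ref{b3.9}. Nothing further is needed.
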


To conclude this section, we shall prove two more lemmas which will lead to an illuminative theorem. In particular, this theorem states that any finite collection of socle elements is contained in a subalgebra $B$ of $\mathrm{Soc}\:A$ which has the Wedderburn-Artin structure; that is,
$$B \cong M_{n_{1}}\left(\mathbb{C}\right) \oplus \cdots \oplus M_{n_{k}}\left(\mathbb{C}\right).$$

\begin{lemma}\label{b3.11}
Let $S = \left\{p, \left(\mathbf{1}-p\right)a_{2}p, \ldots,\left(\mathbf{1}-p\right)a_{n}p  \right\}$ and 
$$T = \left\{p, pb_{2}\left(\mathbf{1}-p\right), \ldots,pb_{m}\left(\mathbf{1}-p\right)  \right\}$$ 
be linearly independent subsets of $Ap$ and $pA$, respectively. Then there exist two linearly independent subsets $S' = \left\{p, u_{2}, \ldots, u_{k} \right\}$ and $T' = \left\{p, v_{2}, \ldots, v_{k} \right\}$ of $Ap$ and $pA$, respectively, such that $\mathrm{span}\;S \subseteq \mathrm{span}\;S'$, $\mathrm{span}\;T \subseteq \mathrm{span}\;T'$ and properties \textnormal{(i)} to \textnormal{(iii)} in Theorem \textnormal{\ref{b3.7}} holds for $S'$ and $T'$.
\end{lemma}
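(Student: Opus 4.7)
The plan is to carry out a two-sided Gram--Schmidt procedure: first anchor a matched pair of families from $S$ alone, then enlarge the $pA$-family to absorb $T$, and finally complete the $Ap$-family, performing a triangular correction at each step to preserve the relations already established.

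First I would set $u_i := (\mathbf{1}-p) a_i p$ for $i=2,\ldots,n$. Applying Lemmas \ref{b3.4} and \ref{b3.5} to $S$ yields elements $v_2, \ldots, v_n \in pA$ of the form $py_i(\mathbf{1}-p)$ such that $\{p, v_2, \ldots, v_n\}$ is linearly independent in $pA$ and $v_i u_j = \delta_{ij} p$ (the latter by Jacobson's Lemma and the minimality of $p$, exactly as in the proof of Theorem \ref{b3.7}). I then extend $\{p, v_2, \ldots, v_n\}$ by elements of $T\setminus\{p\}$ to a basis $\{p, v_2, \ldots, v_n, t_{n+1}, \ldots, t_k\}$ of $\mathrm{span}(\{v_2, \ldots, v_n\}\cup T)$, which ensures $\mathrm{span}\,T$ is contained in the ambient span.

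Since $t_j u_l \in pAp = \mathbb{C}p$, I write $t_j u_l = c_{l,j} p$ and replace $t_j$ by $v_j := t_j - \sum_{l=2}^n c_{l,j} v_l$ for $j\in\{n+1,\ldots,k\}$. This triangular change of basis preserves linear independence and span, keeps each $v_j$ in the form $p(\cdot)(\mathbf{1}-p)$, and achieves $v_j u_l = 0$ for all $l\leq n$. Next, I apply the $pA$-analogue of Lemma \ref{b3.4} to $\{p, v_{n+1}, \ldots, v_k\}$ to obtain $\tilde u_j = (\mathbf{1}-p)r_j p$ with $v_i \tilde u_j = \delta_{ij} p$ for $i, j\in\{n+1,\ldots,k\}$, and perform the symmetric correction: writing $v_l \tilde u_j = d_{l,j} p$ for $l\leq n$, set $u_j := \tilde u_j - \sum_{l=2}^n d_{l,j} u_l$. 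The full relation $v_i u_j = \delta_{ij} p$ for $i, j\in\{2,\ldots,k\}$ then holds, since the correction cancels the $l\leq n$ discrepancies while the $l\geq n+1$ components are untouched by virtue of the previously arranged $v_l u_m = 0$ for $l\geq n+1$, $m\leq n$.

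To wrap up, linear independence of $\{p, u_2, \ldots, u_k\}$ follows by multiplying any linear relation on the left by each $v_i$ (using $v_i p = 0$ together with $v_i u_j = \delta_{ij} p$); properties (i) and (ii) of Theorem \ref{b3.7} are immediate from the forms $(\mathbf{1}-p)(\cdot)p$ and $p(\cdot)(\mathbf{1}-p)$ (which also force $u_j^2 = v_j^2 = 0$), and (iii) is exactly $v_i u_j = \delta_{ij} p$. The main obstacle is the sequencing: the $v$-side orthogonalization must be carried out before the $u$-side construction, so that the triangular correction of the $\tilde u_j$'s does not disturb the already-established relations $v_l u_m = \delta_{lm} p$ for $l, m \leq n$.
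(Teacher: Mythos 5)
Your proof is correct and follows essentially the same route as the paper's: both are biorthogonalization arguments that first build the dual family for $S$ via Lemmas \ref{b3.4} and \ref{b3.5}, then adjoin representatives of $T$ modulo the span already obtained, using the fact that the pairings land in $pAp=\mathbb{C}p$ to perform triangular corrections, and finally invoke Theorem \ref{b3.3} to manufacture the matching elements on the $Ap$ side. The only difference is organizational: the paper adjoins one element of $T$ at a time and reapplies Theorem \ref{b3.3} to the full current family at each step, whereas you process the new elements as a single block and repair the cross terms with the old block by an explicit correction --- the two are functionally equivalent.
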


\begin{proof}
Apply Lemma \ref{b3.4} and Lemma \ref{b3.5} to $S$ in order to obtain a corresponding linearly independent set $\left\{p, pc_{2}\left(\mathbf{1}-p\right), \ldots,pc_{n}\left(\mathbf{1}-p\right)  \right\}$ in $pA$ such that the sets together satisfy properties (i) to (iii) in Theorem \ref{b3.7}. We shall use these sets to construct $S'$ and $T'$: To this end, set $S' := S$ and $T' := \left\{p, pc_{2}\left(\mathbf{1}-p\right), \ldots,pc_{n}\left(\mathbf{1}-p\right)  \right\}$. Consider the element $pb_{2}\left(\mathbf{1}-p\right)$. If $pb_{2}\left(\mathbf{1}-p\right) \in \mathrm{span}\;T'$, then we leave $S'$ and $T'$ unchanged. On the other hand, if $pb_{2}\left(\mathbf{1}-p\right) \notin \mathrm{span}\;T'$, then we proceed with the following scheme: For each $i \in \left\{2, \ldots, n \right\}$, let $\alpha_{i} = - \mathrm{Tr}\,\left(pb_{2}\left(\mathbf{1}-p\right) a_{i}p\right)$. Let $c_{n+1} = b_{2} + \sum_{i = 2}^{n} \alpha_{i}c_{i}$ and notice that $pc_{n+1}\left(\mathbf{1}-p\right) \notin \mathrm{span}\;T'$. Thus, in particular, the set $\left\{p, pc_{2}\left(\mathbf{1}-p\right), \ldots,pc_{n+1}\left(\mathbf{1}-p\right) \right\}$ is linearly independent. Moreover, for any $j \in \left\{2, \ldots, n \right\}$ we have that
\begin{eqnarray*}
pc_{n+1}\left(\mathbf{1}-p\right)a_{j}p & = & p\left(b_{2} + \sum_{i = 2}^{n} \alpha_{i}c_{i}\right)\left(\mathbf{1}-p\right)a_{j}p \\
& = & pb_{2}\left(\mathbf{1}-p\right)a_{j}p + \alpha_{j}p \\
& = & \mathrm{Tr}\,\left(pb_{2}\left(\mathbf{1}-p\right)a_{j}p\right) p + \alpha_{j} p = 0.
\end{eqnarray*}
Next we apply Theorem \ref{b3.3} and the Spectral Mapping Theorem to obtain an element $a_{n+1} \in A$ such that $pc_{i}\left(\mathbf{1}-p\right)a_{n+1}p = 0$ for each $i \in \left\{2, \ldots, n \right\}$ and $pb_{2}\left(\mathbf{1}-p\right)a_{n+1}p = p$. We claim that the set $\left\{p, \left(\mathbf{1}-p\right)a_{2}p, \ldots,\left(\mathbf{1}-p\right)a_{n+1}p \right\}$ is linearly independent: Assume that
\begin{equation}
\lambda_{1}p + \lambda_{2}\left(\mathbf{1}- p\right)a_{2}p + \cdots + \lambda_{n+1}\left(\mathbf{1}- p\right)a_{n+1}p = 0
\label{beq9}
\end{equation}
for some $\lambda_{1}, \ldots, \lambda_{n} \in \mathbb{C}$. Let $i \in \left\{2, \ldots, n+1 \right\}$ be arbitrary but fixed. Multiplying both sides of (\ref{beq9}) on the left by $pc_{i}\left(\mathbf{1}-p\right)$ yields $\lambda_{i}p = 0$. Consequently, we may conclude that $\lambda_{i} = 0$ for each $i \in \left\{2, \ldots, n+1 \right\}$. Thus, (\ref{beq9}) becomes $\lambda_{1}p = 0$ which gives $\lambda_{1} = 0$ and proves our claim. We now set $S':= \left\{p, \left(\mathbf{1}-p\right)a_{2}p, \ldots,\left(\mathbf{1}-p\right)a_{n+1}p  \right\}$ and $T' = \left\{p, pc_{2}\left(\mathbf{1}-p\right), \ldots,pc_{n+1}\left(\mathbf{1}-p\right) \right\}$. A quick inspection shows that $S'$ and $T'$ satisfy properties (i) to (iii) from Theorem \ref{b3.7}. Next we consider the element $pb_{3}\left(\mathbf{1}-p\right)$ and repeat the iteration scheme above. After a total of $m-1$ iterations, we obtain the desired sets $S'$ and $T'$.
\end{proof}

For any $z_{1}, \ldots, z_{r} \in A$, we denote by $C\left[z_{1}, \ldots, z_{r}\right]$ the algebra generated by $z_{1}, \ldots, z_{r}$, that is,
$$C\left[z_{1}, \ldots, z_{r}\right] = \left\{q\left(z_{1}, \ldots, z_{r}\right) : q\;\,\mathrm{is\;a\;polynomial\;without\;constant\;term} \right\}.$$ 

\begin{lemma}\label{b3.12}
Let $p$ be a rank one projection and let $z_{1}, \ldots, z_{r} \in J_{p}$. Then there exists a subalgebra $B$ of $J_{p}$ such that $z_{1},\ldots, z_{r} \in B \cong M_{k} \left(\mathbb{C}\right)$. Moreover, $C\left[z_{1}, \ldots, z_{r}\right] \subseteq B$ and $z_{j}Az_{j} \subseteq B$ for each $j \in \left\{1, \ldots, r \right\}$.
\end{lemma}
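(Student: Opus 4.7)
The plan is to build $B$ out of a ``matrix-unit basis'' produced by Lemma~\ref{b3.11}, large enough to contain every piece of the $z_{j}$'s, and then identify the span of this basis with $M_{k}(\mathbb{C})$ exactly as in Lemma~\ref{b3.8}. The key observation driving the whole argument is that inside $J_{p}$ all products collapse through the identity $pAp = \mathbb{C}p$, so conjugation-type products never produce new directions beyond the finite-dimensional subspace $(\mathrm{span}\,S')\, p\, (\mathrm{span}\,T')$.

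First I would write each $z_{j}$ as a finite sum $z_{j} = \sum_{i} x_{ji}\, p\, y_{ji}$ with $x_{ji} \in Ap$ and $y_{ji} \in pA$. Collecting all the $x_{ji}$ together with $p$ and extracting a linearly independent subset yields a finite set $S_{0} \subseteq Ap$ with $p \in S_{0}$ whose span contains every $x_{ji}$; similarly I obtain $T_{0} \subseteq pA$. Applying Lemma~\ref{b3.2} to each of $S_{0}$ and $T_{0}$, I may replace them (without changing their spans) by sets of the canonical forms $\{p, (\mathbf{1}-p) a_{i} p\}$ and $\{p, p b_{j} (\mathbf{1}-p)\}$. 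Lemma~\ref{b3.11} then delivers enlarged linearly independent sets $S' = \{p, u_{2}, \ldots, u_{k}\}$ and $T' = \{p, v_{2}, \ldots, v_{k}\}$ satisfying properties (i)--(iii) of Theorem~\ref{b3.7}, with $\mathrm{span}\,S_{0} \subseteq \mathrm{span}\,S'$ and $\mathrm{span}\,T_{0} \subseteq \mathrm{span}\,T'$.

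I would then define $B$ to be the linear span of $\{p\} \cup \{u_{i}, v_{j}, u_{i}v_{j} : i,j \in \{2, \ldots, k\}\}$ (using $u_{i}p = u_{i}$ and $pv_{j} = v_{j}$), and verify exactly as in Lemma~\ref{b3.8} that the assignment $p \mapsto e_{1,1}$, $u_{i} \mapsto e_{i,1}$, $v_{j} \mapsto e_{1,j}$, $u_{i}v_{j} \mapsto e_{i,j}$ extends to an algebra isomorphism $B \cong M_{k}(\mathbb{C})$. Since each $x_{ji} \in \mathrm{span}\,S'$ and each $y_{ji} \in \mathrm{span}\,T'$, the elementary terms $x_{ji}\, p\, y_{ji}$ are linear combinations of basis elements of $B$; hence $z_{j} \in B$ for each $j$. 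As $B$ is a subalgebra, this immediately gives $C[z_{1}, \ldots, z_{r}] \subseteq B$.

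The final inclusion $z_{j} A z_{j} \subseteq B$ is where I expect the real content. For arbitrary $a \in A$, I would expand
\begin{equation*}
z_{j}\, a\, z_{j} = \sum_{i, l} x_{ji}\, (p\, y_{ji}\, a\, x_{jl}\, p)\, y_{jl},
\end{equation*}
and observe that minimality of $p$ forces $p\, y_{ji}\, a\, x_{jl}\, p = \lambda_{i,l,a}\, p$ for some scalar. Thus $z_{j}\, a\, z_{j} = \sum_{i, l} \lambda_{i, l, a}\, x_{ji}\, p\, y_{jl}$, which again lies in $B$ by the same reasoning as for $z_{j}$. The main obstacle is recognizing that no matter how wild $a \in A$ is, it contributes only a scalar through this $pAp = \mathbb{C}p$ collapse, so the ``outer'' factors $x_{ji}, y_{jl}$ — and hence the entire expression — remain trapped in the finite-dimensional subalgebra built from $S'$ and $T'$.
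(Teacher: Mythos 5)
Your proposal is correct and follows essentially the same route as the paper: decompose the $z_{j}$ into elementary terms $x\,p\,y$, normalize the resulting finite sets via Lemma~\ref{b3.2}, enlarge them with Lemma~\ref{b3.11} to matrix-unit systems $S'$, $T'$, and take $B$ to be the span of $\{p\} \cup \{u_{i}p,\, pv_{j},\, u_{i}pv_{j}\}$. Your explicit verification of $z_{j}Az_{j} \subseteq B$ via the collapse $pAp = \mathbb{C}p$ is exactly the mechanism the paper invokes (more tersely) when it cites properties (i)--(iii) and the minimality of $p$.
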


\begin{proof}
If $\mathrm{dim}\:Ap$ or $\mathrm{dim}\:pA$ is finite, then the result follows from Lemma \ref{b3.8}. So we may assume that $\mathrm{dim}\:Ap = \mathrm{dim}\:pA = \infty$. Let $V$ and $W$ be bases for $Ap$ and $pA$, respectively, both containing $p$. By Lemma \ref{b3.2} we may assume that any element $y \in V-\left\{p \right\}$ is of the form $y = \left(\mathbf{1}-p\right)xp$ for some $x \in A$. A similar observation holds for $W$. Now, for any $j \in \left\{1, \ldots, r \right\}$ it follows that
$z_{j} = \sum_{i = 1}^{N_{j}} \alpha_{j, i}x_{j, i}py_{j, i}$, where $\alpha_{j, 1}, \ldots, \alpha_{j, N_{j}} \in \mathbb{C}$, $x_{j, 1}, \ldots, x_{j, N_{j}} \in V$ and $y_{j, 1}, \ldots, y_{j, N_{j}} \in W$. To simplify our notation we may assume that
\begin{eqnarray*}
S & := & \left\{p\right\} \cup \left\{x_{j, 1}, \ldots, x_{j, N_{j}} : j \in \left\{1, \ldots, r\right\} \right\} \\
& = & \left\{p, \left(\mathbf{1}-p\right)a_{2}p, \ldots, \left(\mathbf{1}-p\right)a_{n}p  \right\},
\end{eqnarray*}
and that
\begin{eqnarray*} 
T & := & \left\{p \right\} \cup \left\{y_{j, 1}, \ldots, y_{j, N_{j}} : j \in \left\{1, \ldots, r\right\} \right\} \\
& = & \left\{p, pb_{2}\left(\mathbf{1}-p\right), \ldots,pb_{m}\left(\mathbf{1}-p\right)  \right\}.
\end{eqnarray*} 
By Lemma \ref{b3.11} there are two linearly independent subsets $S'$ and $T'$ of $Ap$ and $pA$, respectively, such that $\mathrm{span}\;S \subseteq \mathrm{span}\;S'$, $\mathrm{span}\;T \subseteq \mathrm{span}\;T'$ and properties \textnormal{(i)} to \textnormal{(iii)} in Theorem \textnormal{\ref{b3.7}} holds for $S'$ and $T'$. Say $S' = \left\{p, u_{2}, \ldots, u_{k} \right\}$ and $T' = \left\{p, v_{2}, \ldots, v_{k} \right\}$. If we take
$$B = \mathrm{span}\;\left(\left\{p \right\} \cup \left\{u_{i}pv_{j}, u_{i}p, pv_{j} : i, j \in \left\{2, \ldots, k \right\}\right\}\right),$$
then $B$ is the desired subalgebra of $J_{p}$. Finally, we note that the containments $C\left[z_{1}, \ldots, z_{r}\right] \subseteq B$ and $z_{j}Az_{j} \subseteq B$ for each $j \in \left\{1, \ldots, r \right\}$ follows from properties (i) to (iii) for $S'$ and $T'$ and the minimality of $p$. 
\end{proof}

\begin{theorem}\label{b3.13}
Let $z_{1}, \ldots, z_{r} \in \mathrm{Soc}\:A$. Then there exists a subalgebra $B$ of $\mathrm{Soc}\:A$ such that
$$z_{1}, \ldots, z_{r} \in B \cong M_{n_{1}}\left(\mathbb{C}\right) \oplus \cdots \oplus M_{n_{k}}\left(\mathbb{C}\right).$$
Moreover, $C\left[z_{1}, \ldots, z_{r}\right] \subseteq B$ and $z_{j}Az_{j} \subseteq B$ for each $j \in \left\{1, \ldots, r \right\}$. 
\end{theorem}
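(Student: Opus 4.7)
The plan is to reduce the general socle case to the minimal two-sided ideal case (Lemma \ref{b3.12}) by exploiting the orthogonal decomposition provided by Lemma \ref{b2.1}.

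First, I would invoke Lemma \ref{b2.1} to note that each $z_j \in \mathrm{Soc}\:A$ is a finite sum of elements taken from the pairwise orthogonal two-sided ideals $J_p$, $p \in \mathcal{P}$. Pooling all such decompositions of $z_1, \ldots, z_r$ together involves only finitely many indices, so there exist distinct $p_1, \ldots, p_k \in \mathcal{P}$ such that, for each $j \in \{1, \ldots, r\}$, we may write $z_j = z_j^{(1)} + \cdots + z_j^{(k)}$ with $z_j^{(i)} \in J_{p_i}$. By pairwise orthogonality, the sum $J_{p_1} + \cdots + J_{p_k}$ is internally a direct sum of algebras, i.e. $J_{p_1} + \cdots + J_{p_k} \cong J_{p_1} \oplus \cdots \oplus J_{p_k}$.

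Next, for each fixed $i \in \{1, \ldots, k\}$, I would apply Lemma \ref{b3.12} to the finite collection $z_1^{(i)}, \ldots, z_r^{(i)} \in J_{p_i}$ to obtain a subalgebra $B_i \subseteq J_{p_i}$ with $B_i \cong M_{n_i}(\mathbb{C})$ such that $z_j^{(i)} \in B_i$, $C[z_1^{(i)}, \ldots, z_r^{(i)}] \subseteq B_i$ and $z_j^{(i)} A z_j^{(i)} \subseteq B_i$ for each $j$. Setting
$$B := B_1 \oplus \cdots \oplus B_k \subseteq J_{p_1} \oplus \cdots \oplus J_{p_k} \subseteq \mathrm{Soc}\:A,$$
we then have $B \cong M_{n_1}(\mathbb{C}) \oplus \cdots \oplus M_{n_k}(\mathbb{C})$, and each $z_j = z_j^{(1)} + \cdots + z_j^{(k)}$ lies in $B$.

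It remains to verify the two containment statements, which is where I would need to be careful; orthogonality carries the argument but should be spelled out. For $C[z_1, \ldots, z_r] \subseteq B$: since $J_{p_i} J_{p_l} = \{0\}$ for $i \neq l$, any monomial $z_{j_1} \cdots z_{j_s}$ decomposes (after expanding $z_{j_t} = \sum_i z_{j_t}^{(i)}$ and discarding vanishing cross terms) as $\sum_{i = 1}^{k} z_{j_1}^{(i)} \cdots z_{j_s}^{(i)}$, with each $i$-summand lying in $B_i$. Hence $C[z_1, \ldots, z_r] \subseteq B_1 \oplus \cdots \oplus B_k = B$. For $z_j A z_j \subseteq B$, fix $j$ and $a \in A$; write $z_j a z_j = \sum_{i, l} z_j^{(i)} a z_j^{(l)}$. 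For $i \neq l$, $z_j^{(i)} a \in J_{p_i}$ (as $J_{p_i}$ is two-sided), and $z_j^{(l)} \in J_{p_l}$, so $z_j^{(i)} a z_j^{(l)} \in J_{p_i} J_{p_l} = \{0\}$. Thus $z_j a z_j = \sum_{i} z_j^{(i)} a z_j^{(i)} \in B_1 \oplus \cdots \oplus B_k = B$, completing the proof.

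The main obstacle is conceptual rather than technical: the proof is really just orchestrating the previously developed structural results, and the only place where one must pay genuine attention is using orthogonality of the $J_{p_i}$ to kill the cross terms in the polynomial and ``sandwich'' decompositions.
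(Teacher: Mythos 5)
Your proposal is correct and follows exactly the route the paper intends: the paper's own proof simply states that the theorem is an immediate consequence of Lemma \ref{b2.1} and Lemma \ref{b3.12}, and you have filled in precisely the decomposition, the componentwise application of Lemma \ref{b3.12}, and the orthogonality argument killing the cross terms that make that deduction work. No gaps.
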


\begin{proof}
This is an immediate consequence of Lemma \ref{b2.1} and Lemma \ref{b3.12}.
\end{proof}

As in \cite{finiterankelements}, we denote by $M_{r, n}$, where $r \leq n \leq 2r$, the algebra of $n \times n$ matrices $S=\left[\alpha_{ij}\right]$ satisfying $\alpha_{ij} = 0$ whenever $i > r$ or $j \leq n-r$. In \cite[Lemma 2.7]{finiterankelements}, Bre\v{s}ar and \v{S}emrl managed to prove that an operator $T \in B(X)$ has rank $r$ if and only if $TB(X)T \cong M_{r, n}$ for some $n$. Moreover, by \cite[Theorem 3.8, Theorem 3.13]{tracesocleident} it follows that $\mathrm{Soc}\:B(X)$ is a minimal two-sided ideal. Thus, although the structure of $TB(X)T$ may be very complicated, by Lemma \ref{b2.2} and Lemma \ref{b3.12} it is possible to find a full matrix algebra in which $TB(X)T$ can be embedded.     

\section{Commutators in the Socle}

For the convenience of the reader we recall a few results from \cite{tracesocleident}:

\begin{lemma}\cite[Lemma 3.2]{tracesocleident}\label{b4.1}
Let $x \in pAp = B$, where $p = p_{1} + \cdots + p_{n}$ with $p_{1}, \ldots, p_{n}$ orthogonal rank one projections of $A$. Then $\mathrm{Tr}_{A} (x) = \mathrm{Tr}_{B} (x)$.
\end{lemma}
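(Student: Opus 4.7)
The plan is to reduce the lemma to the equality of multiplicities at each nonzero spectral value of $x$, since by definition $\mathrm{Tr}_A(x) = \sum_{\lambda \in \sigma_A'(x)} \lambda\, m_A(\lambda, x)$ and likewise for $B$, while the zero spectral value contributes nothing. From (\ref{beq2.1}) the nonzero spectra in $A$ and in $B$ coincide, so it remains to show $m_A(\lambda, x) = m_B(\lambda, x)$ for each $\lambda \in \sigma_A'(x)$. I would do this by showing that the Riesz projections associated with $x$ and $\lambda$ in the two algebras agree, and then invoke the identification of multiplicity with the rank of the corresponding Riesz projection (\cite[Corollary 2.7]{aupetitmoutontrace}).

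To compare the Riesz projections, I would exploit that $x = pxp$ satisfies $x(\mathbf{1}-p)=(\mathbf{1}-p)x=0$. Writing $\alpha\mathbf{1} - x = (\alpha p - x) + \alpha(\mathbf{1}-p)$ as an orthogonal decomposition in $A$, for every $\alpha$ that is nonzero and lies outside $\sigma_B(x)$ one obtains the resolvent identity
$$(\alpha\mathbf{1}-x)^{-1} = (\alpha p - x)^{-1}_B + \frac{1}{\alpha}(\mathbf{1}-p),$$
the first inverse being taken in $B$. Fix a nonzero $\lambda \in \sigma_A'(x)$, choose $r>0$ small enough that $\overline{B}(\lambda,r)$ separates $\lambda$ from the rest of $\sigma_A(x)$ and excludes $0$, and let $\Gamma_0$ be a contour surrounding $\sigma_A(x)$ that lies in the domain of $f_{\lambda}$. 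Substituting the above identity into the definition of $p_A(\lambda, x)$ splits the contour integral into two pieces: the first is precisely $p_B(\lambda, x)$, while the second equals $(\mathbf{1}-p)\cdot \frac{1}{2\pi i}\int_{\Gamma_0} \frac{f_{\lambda}(\alpha)}{\alpha}\,d\alpha$. Splitting $\Gamma_0$ into arcs around the individual spectral values and observing that $f_{\lambda}$ vanishes outside $B(\lambda,r)$, while $1/\alpha$ is holomorphic on the arc surrounding $\lambda$ (since $0$ is excluded there), this latter integral vanishes. Hence $p_A(\lambda, x) = p_B(\lambda, x)$.

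Once this equality of Riesz projections is in hand, I would finish quickly: the common projection lies in $pAp = B$, so (\ref{beq2.2}) gives that its rank in $A$ equals its rank in $B$; combined with $m(\lambda, a) = \mathrm{rank}(p(\lambda, a))$ this yields $m_A(\lambda, x) = m_B(\lambda, x)$ for every $\lambda \in \sigma_A'(x)$, and the trace equality follows by summation. The main obstacle, and where the care is needed, is the orthogonal resolvent decomposition together with the vanishing of the $(\mathbf{1}-p)$ contribution to the contour integral; the finite-rank hypothesis $p = p_1 + \cdots + p_n$ is only used implicitly to guarantee that $x \in \mathrm{Soc}\,A$ and that both traces are defined, while the core argument rests entirely on the machinery (\ref{beq2.1}) and (\ref{beq2.2}) of Section 2.
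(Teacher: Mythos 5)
The paper does not actually prove this lemma: it is recalled verbatim from \cite[Lemma 3.2]{tracesocleident} purely for reference, so there is no in-text argument to compare yours against. Judged on its own, your proof is correct and self-contained given the machinery recalled in Sections 1--2. The reduction to multiplicities is legitimate because the $\lambda=0$ term contributes $0\cdot m(0,x)$ to each trace; the orthogonal resolvent identity checks out, since for $y=(\alpha p-x)^{-1}_{B}$ one computes $(\alpha\mathbf{1}-x)\left(y+\tfrac{1}{\alpha}(\mathbf{1}-p)\right)=p+(\mathbf{1}-p)=\mathbf{1}$ using $x(\mathbf{1}-p)=(\mathbf{1}-p)x=0$ and $(\mathbf{1}-p)y=0$; and the scalar integral $\frac{1}{2\pi i}\int_{\Gamma_{0}}\frac{f_{\lambda}(\alpha)}{\alpha}\,d\alpha$ vanishes by Cauchy's theorem exactly as you say, because $f_{\lambda}$ kills every arc except the one around $\lambda$, inside which $1/\alpha$ is holomorphic. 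Two small points worth making explicit: the contour must avoid $0$, which is automatic because either $p=\mathbf{1}$ (and the lemma is trivial) or $0\in\sigma_{A}(x)$ so $\Gamma_{0}$ already encircles it without passing through it; and the identification $m_{B}(\lambda,x)=\mathrm{rank}_{B}\left(p_{B}(\lambda,x)\right)$ requires applying \cite[Theorem 2.6]{aupetitmoutontrace} inside $B$, which is permitted since $pAp$ is a semisimple Banach algebra with identity $p$ and $x$ has finite rank there by (\ref{beq2.2}). With those remarks your argument goes through.
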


\begin{theorem}\cite[Theorem 3.4]{tracesocleident}\label{b4.2}
Suppose that $\mathrm{Soc}\:A \neq \left\{0 \right\}$. For any linear functional $f$ on $\mathrm{Soc}\:A$ we have that $f = \alpha\mathrm{Tr}$ for some $\alpha \in \mathbb{C}$ if and only if $f$ is constant on the rank one projections of $A$.
\end{theorem}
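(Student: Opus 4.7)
The plan is to treat the two directions asymmetrically: the forward direction is a direct spectral computation, while the converse reduces, via the Wedderburn-Artin-type result of Theorem \ref{b3.13}, to the classical characterization of the trace on a finite-dimensional matrix algebra.

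For the ``only if'' direction, I would compute $\mathrm{Tr}(p)$ for a rank one projection $p$ by combining $\sigma(p) \subseteq \{0,1\}$, the lower bound $m(\lambda, p) \geq 1$, and formula (\ref{beq0}); this pins down $m(1,p) = 1$ and yields $\mathrm{Tr}(p) = 1$. Hence $f = \alpha\,\mathrm{Tr}$ automatically takes the constant value $\alpha$ on rank one projections.

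For the converse, assume $f \equiv \alpha$ on rank one projections, fix any $x \in \mathrm{Soc}\:A$, and invoke Theorem \ref{b3.13} to place $x$ in a subalgebra $B \cong M_{n_{1}}(\mathbb{C}) \oplus \cdots \oplus M_{n_{k}}(\mathbb{C})$. Each matrix factor $B_{i}$ sits inside some $J_{p_{i}}$ via Lemma \ref{b3.12}, and I would establish the key bridge: every rank one idempotent $q$ of $B_{i}$ is already a rank one projection of $A$. The plan here is to use the matrix-unit presentation of $B_{i}$, built from the $u_{r},v_{s}$ of Theorem \ref{b3.7}, to write $q = uv$ with $u \in Ap_{i}$, $v \in p_{i}A$ and $vu = p_{i}$. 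Then for any $a \in A$, the minimality of $p_{i}$ gives $vau \in p_{i}Ap_{i} = \mathbb{C}p_{i}$, whence $qaq = u(vau)v \in \mathbb{C}uv = \mathbb{C}q$. So $qAq = \mathbb{C}q$ and $q$ is a rank one projection of $A$.

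Once this bridge is in place, the remainder is linear algebra: both $f|_{B_{i}}$ and $\mathrm{Tr}|_{B_{i}}$ are linear functionals on $M_{n_{i}}(\mathbb{C})$ that are constant on rank one idempotents, with values $\alpha$ and $1$ respectively. Any such functional on $M_{n}(\mathbb{C})$ must equal its constant value times the standard matrix trace (a quick verification on the matrix units $e_{rr}$ and $e_{rr}+e_{rs}$). Therefore $f|_{B_{i}} = \alpha\,\mathrm{Tr}|_{B_{i}}$ on each factor, so $f(x) = \alpha\,\mathrm{Tr}(x)$ for this arbitrary $x$, and thus $f = \alpha\,\mathrm{Tr}$ on all of $\mathrm{Soc}\:A$. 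The main obstacle I foresee is exactly the bridge step: without the explicit matrix-unit realization of $B_{i}$ provided by Theorem \ref{b3.7} and Lemma \ref{b3.12}, it is not transparent that a rank one idempotent of an abstract matrix factor of $B$ remains minimal in the ambient algebra $A$, and that identification is what makes the comparison of $f$ and $\mathrm{Tr}$ go through.
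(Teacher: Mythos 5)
Your argument is correct, but note that the paper does not actually prove this statement: Theorem \ref{b4.2} is imported verbatim from \cite[Theorem 3.4]{tracesocleident}, so there is no internal proof to compare against, and your derivation is necessarily a different (and self-contained) route. The forward direction is fine: $\sigma(p)\subseteq\{0,1\}$ with $1\in\sigma(p)$ for $p\neq 0$, together with $m(\lambda,p)\geq 1$ and (\ref{beq0}), forces $m(1,p)=1$ and $\mathrm{Tr}(p)=1$. For the converse, your reduction via Theorem \ref{b3.13} to the factors $M_{n_i}(\mathbb{C})$ works, and the ``bridge'' you single out is indeed the crux and is handled correctly: writing a rank one idempotent of a factor as $q=uv$ with $u\in Ap_i$, $v\in p_iA$, $vu=p_i$ (which the matrix-unit presentation of Lemma \ref{b3.8}/Lemma \ref{b3.12} makes possible, since $q^2=u(vu)v$ forces $vu=p_i$ for a nonzero idempotent), one gets $qaq=u(vau)v\in\mathbb{C}q$ by minimality of $p_i$, so $q$ is a minimal, hence rank one, projection of $A$. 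The finite-dimensional step is also right: a linear functional on $M_n(\mathbb{C})$, written as $X\mapsto\mathrm{tr}(CX)$, that is constant $c$ on the rank one idempotents $e_{rr}$ and $e_{rr}+e_{rs}$ must have $C=cI$; applying this to both $f$ and $\mathrm{Tr}$ on each factor (the latter taking value $1$ by the forward computation) gives $f=\alpha\mathrm{Tr}$ there, and linearity finishes. A quick check shows no circularity: the Section 3 machinery you invoke (Theorems \ref{b3.3}, \ref{b3.7}, \ref{b3.13} and their supporting lemmas) rests on \cite{aupetitmoutontrace} and on Lemma \ref{b2.1}, not on the statement being proved. What your approach buys is precisely this independence from \cite{tracesocleident}: it re-derives the trace characterization from the generalized Wedderburn--Artin structure developed in this paper, at the cost of invoking that heavier machinery where the original presumably argues more directly.
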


\begin{theorem}\cite[Theorem 3.8, Theorem 3.9]{tracesocleident}\label{b4.3}
The following are equivalent:
\begin{itemize}
\item[\textnormal{(a)}]
For any linear functional $f$ on $\mathrm{Soc}\:A$ we have that
$$f(ab) = f(ba) \;\,\mathrm{for\;all}\;\, a, b \in \mathrm{Soc}\:A \Leftrightarrow f = \alpha \mathrm{Tr} \;\,\mathrm{for\;some}\;\, \alpha \in \mathbb{C}.$$
\item[\textnormal{(b)}]
$\mathrm{Soc}\:A$ is a minimal two-sided ideal.
\item[\textnormal{(c)}]
$pAp \cong M_{n_{p}} \left(\mathbb{C}\right)$ for each finite-rank projection $p$ of $A$.
\end{itemize}
\end{theorem}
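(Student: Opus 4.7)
The plan is to prove the equivalence via (a)$\Leftrightarrow$(b) and (b)$\Leftrightarrow$(c), leaning on four ingredients from the preceding text: Lemma~\ref{b2.1} (the pairwise orthogonal decomposition $\mathrm{Soc}\:A = \oplus_{p\in\mathcal{P}} J_p$), Lemma~\ref{b3.12} (any finite subset of a single $J_p$ can be placed inside a subalgebra $B\cong M_k(\mathbb{C})$ of $J_p$ that absorbs each corner $z_jAz_j$), Theorem~\ref{b4.2} ($f=\alpha\mathrm{Tr}$ iff $f$ is constant on rank one projections), and Lemma~\ref{b4.1} (trace compatibility on corners of the form $pAp$).

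For (b)$\Rightarrow$(a), the converse direction is property (ii) of the trace. For the forward direction, let $f$ be commutator-vanishing; by Theorem~\ref{b4.2} it suffices to verify $f(p)=f(q)$ for any two rank one projections $p,q$. Minimality of $\mathrm{Soc}\:A$ places both in the unique ideal $J_r=\mathrm{Soc}\:A$, so Lemma~\ref{b3.12} applied to the pair $\{p,q\}$ yields $B\cong M_k(\mathbb{C})$ with $p,q\in B$ and $pAp,qAq\subseteq B$. The sandwich $pBp\subseteq pAp=\mathbb{C}p$ combined with $p\in pBp$ forces $p$ to be rank one in $B$, and similarly for $q$. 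In a full matrix algebra any two rank one projections are similar, so $q=upu^{-1}$ for some $u$ invertible in $B$; then $q-p=[u,pu^{-1}]$ is a commutator whose entries lie in $\mathrm{Soc}\:A$, and $f(q-p)=0$.

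For (a)$\Rightarrow$(b), I argue contrapositively. If $\mathrm{Soc}\:A$ is not minimal, Lemma~\ref{b2.1} supplies two distinct ideals $J_{p_1},J_{p_2}$ from the orthogonal family. Orthogonality together with Von~Neumann regularity makes the decomposition $a=\sum a_i$ of every $a\in\mathrm{Soc}\:A$ (with $a_i\in J_{p_i}$) unique, so $f(a):=\mathrm{Tr}(a_1)$ is a well-defined linear functional. Because $J_{p_i}J_{p_j}=\{0\}$ for $i\neq j$, products collapse componentwise, $(ab)_1=a_1b_1$, and hence $f(ab)=\mathrm{Tr}(a_1b_1)=\mathrm{Tr}(b_1a_1)=f(ba)$. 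Yet $f(p_1)=1\neq 0=f(p_2)$ while $\mathrm{Tr}(p_1)=\mathrm{Tr}(p_2)=1$, ruling out $f=\alpha\mathrm{Tr}$ for any $\alpha$.

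For (c)$\Rightarrow$(b), again by contraposition: distinct ideals $J_p,J_q$ force $pq=qp=0$ by Lemma~\ref{b2.1}, so $p+q$ is a rank two projection, yet $(p+q)A(p+q)=pAp+pAq+qAp+qAq=\mathbb{C}p\oplus\mathbb{C}q$ (since $pAq,qAp\subseteq J_p\cap J_q=\{0\}$) has dimension $2\neq 4=\dim M_2(\mathbb{C})$. For (b)$\Rightarrow$(c), given a finite-rank projection $\pi$ of rank $n$, apply Lemma~\ref{b3.12} to $\{\pi\}\subseteq J_r$ to obtain $B\cong M_k(\mathbb{C})$ with $\pi\in B$ and $\pi A\pi\subseteq B$; then $\pi A\pi=\pi B\pi\cong M_j(\mathbb{C})$ for some $j$. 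Writing $\pi=p_1+\cdots+p_n$ as a sum of orthogonal rank one projections and invoking Lemma~\ref{b4.1} gives $\mathrm{Tr}_A(\pi)=\mathrm{Tr}_{\pi A\pi}(\pi)$; the left side is $n$ and the right side is the trace of the identity of $M_j(\mathbb{C})$, namely $j$, so $j=n$. The main obstacle in both (b)$\Rightarrow$(a) and (b)$\Rightarrow$(c) is controlling what happens inside the matrix subalgebra provided by Lemma~\ref{b3.12}: the corner-absorption $z_jAz_j\subseteq B$ is precisely what lets one realise $q-p$ as an explicit commutator in the former and what pins the block size down to $\mathrm{rank}(\pi)$ via the trace identity in the latter.
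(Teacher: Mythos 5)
The paper offers no proof of this theorem: it is quoted wholesale from \cite[Theorems 3.8 and 3.9]{tracesocleident}, so there is nothing in-text to compare your argument against. Judged on its own, your derivation from the paper's internal machinery --- Lemmas \ref{b2.1} and \ref{b2.2} for the orthogonal decomposition, Lemma \ref{b3.12} for the embedding into a full matrix algebra, Theorem \ref{b4.2} for recognising multiples of $\mathrm{Tr}$, and Lemma \ref{b4.1} for trace compatibility --- is sound, and it is very much in the spirit of how the paper itself deploys these tools later (compare Lemmas \ref{b4.7} and \ref{b4.8}). In particular: minimality of $\mathrm{Soc}\:A$ is indeed equivalent to $\mathcal{P}$ being a singleton; the component functional $a\mapsto\mathrm{Tr}\,(a_{1})$ is well defined because orthogonality plus Von Neumann regularity forces uniqueness of the decomposition; $pBp\subseteq pAp=\mathbb{C}p$ does make $p$ a rank one idempotent of $B\cong M_{k}(\mathbb{C})$, so the similarity trick legitimately exhibits $q-p$ as a commutator of two socle elements; and the two-dimensional commutative corner $\mathbb{C}p\oplus\mathbb{C}q$ is not a full matrix algebra.

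Two small repairs. First, in (b)$\Rightarrow$(c) you apply Lemma \ref{b4.1} to an arbitrary finite-rank projection $\pi$, which presupposes that $\pi$ is a sum of orthogonal rank one projections; this is true (decompose the identity of the finite-dimensional semisimple algebra $\pi A\pi$ into orthogonal minimal idempotents and use (\ref{beq2.1})--(\ref{beq2.2})), but it needs saying --- or can be avoided altogether, since $\mathrm{rank}_{\pi A\pi}(\pi)=\mathrm{rank}_{A}(\pi)=n$ by (\ref{beq2.2}) while the identity of $M_{j}(\mathbb{C})$ has rank $j$, giving $j=n$ with no trace computation at all. Second, your closing sentence misstates the role of the corner-absorption in (b)$\Rightarrow$(a): there you only use $p,q\in B$ together with $pBp\subseteq pAp$, which holds for any subalgebra containing $p$; the inclusion $z_{j}Az_{j}\subseteq B$ from Lemma \ref{b3.12} is genuinely needed only in (b)$\Rightarrow$(c).
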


If $x, y \in A$, then we define the \textit{commutator} of $x, y$ by
$$\left[x, y\right] = xy-yx.$$
Shoda's Theorem (see \cite{shodathm}) says that if $A = M_{k} \left(\mathbb{C}\right)$, then the traceless matrices in $A$ are precisely those matrices that can be expressed as commutators. As a particular example, we obtain the classical fact that this result can be generalized to $B(X)$. However, Shoda's Theorem fails in general. We can in fact give a precise characterization of those Banach algebras in which Shoda's Theorem holds:

\begin{theorem}\label{b4.4}\textnormal{(Generalized Shoda's Theorem.)}\\
Every traceless element $a \in \mathrm{Soc}\:A$ can be expressed as the commutator of two elements belonging to $\mathrm{Soc}\:A$ if and only if $\mathrm{Soc}\:A$ is a minimal two-sided ideal. Moreover, the rank of each of the two elements in the commutator does not exceed $\mathrm{rank}\,(a)$. In particular, the Generalized Shoda's Theorem is valid for $A = B(X)$.
\end{theorem}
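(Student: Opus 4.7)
My plan is to prove the equivalence via two implications and to secure the rank bound by reducing to a strengthened matrix-level Shoda. For the forward direction I argue by contrapositive: if $\mathrm{Soc}\,A$ fails to be a minimal two-sided ideal, then Lemma \ref{b2.1} provides at least two distinct rank one projections $p_{1}, p_{2} \in \mathcal{P}$, with the associated ideals $J_{p_{1}}, J_{p_{2}}$ pairwise orthogonal. The element $a = p_{1} - p_{2}$ is traceless. Suppose for contradiction that $a = [x, y]$ with $x, y \in \mathrm{Soc}\,A$. Using Lemma \ref{b2.1} decompose $x = \sum_{p \in \mathcal{P}} x_{p}$ and $y = \sum_{p \in \mathcal{P}} y_{p}$ with $x_{p}, y_{p} \in J_{p}$. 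The orthogonality $J_{p}J_{q} = \{0\}$ for $p \neq q$ kills the cross terms, so $[x,y] = \sum_{p \in \mathcal{P}} [x_{p}, y_{p}]$. Comparing $J_{p_{1}}$-components forces $p_{1} = [x_{p_{1}}, y_{p_{1}}]$, whence trace property (ii) delivers the contradiction $1 = \mathrm{Tr}(p_{1}) = 0$.

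For the converse I assume $\mathrm{Soc}\,A$ is a minimal two-sided ideal, which by Lemma \ref{b2.1} collapses $\mathcal{P}$ to a single element, so $\mathrm{Soc}\,A = J_{p}$ for one rank one projection $p$. Given a traceless $a \in J_{p}$ of rank $r$, Lemma \ref{b3.12} supplies a subalgebra $B \subseteq J_{p}$ with $a \in B \cong M_{k}(\mathbb{C})$. A brief inspection of the proof of Lemma \ref{b3.12} reveals that $B = qAq$ for a projection $q$ which is a sum of orthogonal rank one projections (namely $q = p + \sum_{i \geq 2} u_{i}pv_{i}$ in the notation there), so Lemma \ref{b4.1} gives $\mathrm{Tr}_{B}(a) = \mathrm{Tr}_{A}(a) = 0$, and (\ref{beq2.2}) gives $\mathrm{rank}_{B}(a) = r$. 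The theorem then reduces to the finite-dimensional claim that every trace-zero matrix of rank $r$ in $M_{k}(\mathbb{C})$ is a commutator of two matrices of rank at most $r$.

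To dispatch the matrix case, I would choose a basis adapted to $\mathrm{Im}(a)$ so that the last $k-r$ rows of $a$ vanish, placing $a$ in block form with a traceless $r \times r$ top-left block $A_{1}$ and an $r \times (k-r)$ top-right block $A_{2}$. A strengthened (Albert--Muckenhoupt) form of Shoda's theorem supplies $U, V \in M_{r}(\mathbb{C})$ with $A_{1} = UV - VU$ and $U$ invertible (one may in fact take $U$ diagonal with distinct nonzero entries). Setting
\begin{equation*}
X = \begin{pmatrix} U & 0 \\ 0 & 0 \end{pmatrix}, \qquad Y = \begin{pmatrix} V & U^{-1}A_{2} \\ 0 & 0 \end{pmatrix},
\end{equation*}
a direct computation gives $[X, Y] = a$ with $\mathrm{rank}\,(X) = r$ (because $U$ is invertible) and $\mathrm{rank}\,(Y) \leq r$ (because the last $k-r$ rows of $Y$ vanish). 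Pulling $X, Y$ back through the isomorphism $B \cong M_{k}(\mathbb{C})$ and using (\ref{beq2.2}) to transport the rank bounds back to $A$ yields the desired commutator presentation in $\mathrm{Soc}\,A$. The $B(X)$ statement is then immediate, since $\mathrm{Soc}\,B(X)$ is a minimal two-sided ideal.

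The main obstacle is precisely the rank bound: classical Shoda offers no control over the ranks of the commutator factors, so everything rests on the strengthened Albert--Muckenhoupt form where one factor may be chosen invertible. That invertibility is exactly what allows the off-diagonal block $A_{2}$ to be absorbed through $U^{-1}A_{2}$ without pushing $\mathrm{rank}\,(Y)$ past $r$; a naive lift of the classical Shoda decomposition to $M_{k}(\mathbb{C})$ would in general inflate both ranks.
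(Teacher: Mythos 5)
Your proof is correct, but it takes a genuinely different route from the paper's on both implications. For the forward direction the paper argues through linear functionals: traceless differences $p-q$ of rank one projections being commutators forces every tracial functional to be constant on rank one projections, whence $f=\alpha\mathrm{Tr}$ by Theorem \ref{b4.2} and minimality by Theorem \ref{b4.3}. You instead argue by contrapositive with the explicit test element $p_{1}-p_{2}$ and the orthogonal decomposition of $\mathrm{Soc}\:A$ into the $J_{p}$; this is more hands-on and bypasses Theorems \ref{b4.2} and \ref{b4.3} entirely, though your ``comparison of $J_{p_{1}}$-components'' tacitly uses that the sum $\sum_{p}J_{p}$ is direct (true, by orthogonality plus Von Neumann regularity, but worth a line). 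For the converse the paper diagonalizes $a$ itself, writes $a=pap+r_{a}$ with $\mathrm{rank}\,(p)=\mathrm{rank}\,(a)$, applies classical Shoda inside $pAp\cong M_{k}(\mathbb{C})$, and absorbs the remainder via $\left[pxp+r_{a}\left(\lambda p+pyp\right)^{-1},\lambda p+pyp\right]=a$, so the rank bound is automatic because both factors lie in $Ap$. You instead embed $a$ into a possibly much larger $B\cong M_{k}(\mathbb{C})$ via Lemma \ref{b3.12} --- exactly the route the paper's remark after Theorem \ref{b4.4} says loses the rank bound --- and then recover the bound by proving a sharpened matrix statement: a traceless rank-$r$ matrix in $M_{k}(\mathbb{C})$ is a commutator of two matrices of rank at most $r$, via the Albert--Muckenhoupt refinement with one invertible factor. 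Your block computation checks out, and your identification $B=qAq$ with $q=p+\sum_{i\geq 2}u_{i}pv_{i}$ a sum of orthogonal rank one projections is correct (it follows from $B\subseteq qAq$ and $\dim qAq\leq[\mathrm{rank}\,(q)]^{2}=k^{2}=\dim B$), which legitimizes your appeals to Lemma \ref{b4.1} and (\ref{beq2.2}). The trade-off: the paper's argument is shorter and needs only classical Shoda, while yours salvages the Lemma \ref{b3.12} route and isolates a rank-controlled matrix Shoda that is of independent interest.
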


\begin{proof}
Suppose that $\mathrm{Soc}\:A$ is a minimal two-sided ideal of $A$, and suppose that $a \in \mathrm{Soc}\:A$ has $\mathrm{Tr}\,(a)=0$. If $a = 0$, then $a$ is a commutator. So assume that $a \neq 0$. By the density of $E(a)$ and the Diagonalization Theorem there exist a $u \in G(A)$, orthogonal rank one projections $p_{1}, \ldots, p_{n}$ and $\lambda_{1}, \ldots, \lambda_{n} \in \mathbb{C}-\left\{0 \right\}$ such that $a = \lambda_{1}up_{1} + \cdots + \lambda_{n}up_{n}$. So, by the orthogonality of the $p_{i}$, $a$ can be expressed as $a=ap$, where $p = p_{1} + \ldots + p_{n}$ is a finite-rank projection. Setting $r_{a} = ap-pap$, write
$$a = pap + \left(ap - pap\right) = pap + r_{a}$$
and notice that $pr_{a} = 0$ and $r_{a}p = r_{a}$. Observe also that $\mathrm{Tr}\,(pap) = \mathrm{Tr}\,(ap) = \mathrm{Tr}\,(a) = 0$. Now, by Theorem \ref{b4.3} it follows that $pAp \cong M_{k}\left(\mathbb{C}\right)$. Thus, by Lemma \ref{b4.1} and the classical Shoda's Theorem for matrices, we have $pap = \left[pxp, pyp\right]$ for some $x, y \in A$. Finally pick $\left|\lambda\right|$ sufficiently large so that $\lambda p + pyp \in G\left(pAp\right)$. Then
\begin{eqnarray*}
&&\left[pxp+r_{a}\left(\lambda p +pyp\right)^{-1}, \lambda p + pyp\right] \\ & = & (pxp)(pyp)+\lambda pxp + r_{a}p - (pyp)(pxp) - \lambda pxp \\
& = & \left[pxp, pyp\right] + r_{a}p  =  pap +r_{a}p = a.
\end{eqnarray*}
Since
$$\mathrm{rank}\,(p) = \mathrm{rank}\,\left(p_{1}\right) + \cdots + \mathrm{rank}\,\left(p_{n}\right) = n =  \mathrm{rank}\,(a),$$ 
it follows from the properties of the rank that the rank of each of the two elements in the commutator does not exceed $\mathrm{rank}\,(a)$. This proves the reverse implication. For the forward implication, let $f$ be any linear functional on $\mathrm{Soc}\:A$ such that $f(ab) = f(ba)$ for all $a, b \in \mathrm{Soc}\:A$. Now, for any rank one projections $p$ and $q$ of $A$ we have $\mathrm{Tr}\,(p-q) = \mathrm{Tr}\,(p) - \mathrm{Tr}\,(q) = 0$. Hence, $p - q$ is a commutator. Therefore, by the linearity of $f$ we may conclude that $f(p) = f(q)$. In fact, since $p$ and $q$ were arbitrary, it follows that $f$ is constant on the rank one projections of $A$. Thus, by Theorem \ref{b4.2} it follows that $f = \alpha \mathrm{Tr}$ for some $\alpha \in \mathbb{C}$. By Theorem \ref{b4.3} this shows that $\mathrm{Soc}\:A$ is indeed a minimal two-sided ideal. This proves the forward implication. The last observation is true since $\mathrm{Soc}\:B(X)$ is a minimal two-sided ideal.
\end{proof}

Of course it is possible to obtain the reverse implication of Theorem \ref{b4.4} by means of Lemma \ref{b2.2} and Lemma \ref{b3.12}. However, this approach does not yield the upper bound for the rank of each of the two elements in the commutator.\\

By \cite[Theorem 2.12]{aupetitmoutontrace} it follows that
$$\mathrm{rank}\,(a) \leq \mathrm{dim}\:aAa \leq \left[\mathrm{rank}\,(a)\right]^{2} \;\,\mathrm{for\;all}\;\,a\in A.$$
We also have the following:

\begin{theorem}\label{b4.5}
Suposse that $a \in \mathrm{Soc}\:A$ and that $\mathrm{Tr}\,(a) = 0$. If $\mathrm{dim}\:aAa = \left[\mathrm{rank}\,(a)\right]^{2}$, then $a = \left[x, y\right]$ for some $x, y \in \mathrm{Soc}\:A$.
\end{theorem}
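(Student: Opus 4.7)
The plan is to use the dimension hypothesis to confine $a$ to a single minimal two-sided ideal $J_p$, then apply Lemma \ref{b3.12} to embed $a$ in a subalgebra $B \cong M_n(\mathbb{C})$ of $\mathrm{Soc}\:A$, match the trace of $A$ with the standard matrix trace on $B$, and conclude via the classical Shoda theorem. First, by Lemma \ref{b2.1} I write $a = a_1 + \cdots + a_k$ with each $a_j \in J_{p_j}$ for distinct $p_j \in \mathcal{P}$. Since $J_{p_i}J_{p_j} = \left\{0\right\}$ for $i \neq j$, for every $x \in A$ we have $a_i x a_j \in J_{p_i}J_{p_j} = \left\{0\right\}$, whence $aAa = \bigoplus_{j=1}^{k} a_j A a_j$ and $\dim aAa = \sum_{j=1}^{k} \dim a_j A a_j$.

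Next I claim the rank is additive: $\mathrm{rank}\,(a) = \sum_j \mathrm{rank}\,(a_j)$. Since each $a_j$ is Von Neumann regular there is $b_j \in A$ with $a_j b_j a_j = a_j$, so $e_j := a_j b_j$ is an idempotent in the two-sided ideal $J_{p_j}$ satisfying $e_j a_j = a_j$. For any $x \in A$ we have $x a_j = (x e_j) a_j$ with $x e_j \in J_{p_j}$, so the rank of $a_j$ is already achieved by some $x_j \in J_{p_j}$. Setting $y := x_1 + \cdots + x_k$, the ideal orthogonality yields $y a = \sum_j x_j a_j$. Each maximal element $x_j a_j$ is written by the Diagonalization Theorem as $\sum_i \mu_i^{(j)} q_i^{(j)}$, a linear combination of $\mathrm{rank}\,(a_j)$ orthogonal rank-one projections in $J_{p_j}$ with nonzero scalars; since projections from distinct $J_{p_j}$ are automatically orthogonal, the entire family $\{q_i^{(j)}\}$ is pairwise orthogonal, and \cite[Theorem 2.16]{aupetitmoutontrace} gives $\mathrm{rank}\,(y a) = \sum_j \mathrm{rank}\,(a_j)$. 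Combining with $\mathrm{rank}\,(y a) \leq \mathrm{rank}\,(a)$ and the subadditivity of rank, the claim follows. Writing $r_j := \mathrm{rank}\,(a_j)$, the Aupetit--Mouton bound $\dim a_j A a_j \leq r_j^2$ together with the hypothesis yield
\[
\Bigl(\sum_j r_j\Bigr)^{2} = \mathrm{rank}\,(a)^2 = \dim aAa = \sum_j \dim a_j A a_j \leq \sum_j r_j^2 \leq \Bigl(\sum_j r_j\Bigr)^{2},
\]
so $\sum_j r_j^2 = \bigl(\sum_j r_j\bigr)^2$, which forces all but one $r_j$ to vanish. Therefore $a$ lies in a single $J_p$, and Lemma \ref{b3.12} supplies a subalgebra $B \subseteq J_p \subseteq \mathrm{Soc}\:A$ with $a \in B \cong M_n(\mathbb{C})$.

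To finish, I verify that the restriction $\mathrm{Tr}|_B$ coincides with the standard matrix trace on $B \cong M_n(\mathbb{C})$. The construction in Lemma \ref{b3.8} exhibits the identity $e$ of $B$ as a sum of $n$ mutually orthogonal rank-one projections of $A$, so $\mathrm{Tr}\,(e) = \mathrm{rank}\,(e) = n$. Since $\mathrm{Tr}|_B$ is a trace functional on the simple algebra $B$, it must be a scalar multiple of the standard matrix trace, and the value $n$ at $e$ pins that scalar to be $1$. Consequently the hypothesis $\mathrm{Tr}\,(a) = 0$ makes $a$ traceless in $M_n(\mathbb{C})$, and the classical Shoda theorem delivers $x, y \in B \subseteq \mathrm{Soc}\:A$ with $a = \left[x, y\right]$. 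I expect the rank-additivity step to be the main obstacle; the essential point is that $x_j$ can be chosen inside $J_{p_j}$ (thanks to Von Neumann regularity), so that a single element $y$ simultaneously witnesses the ranks of all the $a_j$.
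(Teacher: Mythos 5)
Your proof is correct, but it follows a genuinely different route from the paper's. The paper never leaves the corner algebra: it diagonalizes $a$ to get a rank-$n$ projection $p$ with $ap=a$, shows $aAa = upAp$ so that $\dim pAp = n^2$, applies Wedderburn--Artin to the finite-dimensional semisimple algebra $pAp$, and uses the identity $(n_1+\cdots+n_k)^2 = n_1^2+\cdots+n_k^2$ to force a single block $pAp\cong M_{n_1}(\mathbb{C})$; it then finishes with the classical Shoda theorem applied to $pap$ together with the correction term $r_a = ap-pap$ and the commutator trick from Theorem \ref{b4.4}. You instead decompose $a=a_1+\cdots+a_k$ along the orthogonal minimal ideals $J_{p_j}$, prove that both $\dim aAa$ and $\mathrm{rank}\,(a)$ are additive across the blocks (your rank-additivity argument --- choosing the witnessing element $x_j$ inside $J_{p_j}$ via Von Neumann regularity, diagonalizing each $x_ja_j$, and invoking \cite[Theorem 2.16]{aupetitmoutontrace} on the resulting pairwise orthogonal family --- is the novel step and it checks out), and then the same sum-of-squares identity confines $a$ to a single $J_p$; Lemma \ref{b3.12} then places $a$ itself inside $B\cong M_n(\mathbb{C})$ and classical Shoda applies directly. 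The trade-offs: the paper's localization at $pAp$ gets the trace identification for free from Lemma \ref{b4.1} but must handle the off-corner part of $a$ via $r_a$; your route needs the extra rank-additivity lemma and an independent identification of $\mathrm{Tr}|_B$ with the matrix trace (your argument via tracial functionals on a simple algebra, pinned down by $\mathrm{Tr}\,(e)=n$, is fine), but it avoids the correction term entirely, makes explicit that the dimension hypothesis is precisely what forces $a$ into one minimal two-sided ideal, and in effect re-derives Lemma \ref{b4.8} by a different mechanism (Lemma \ref{b3.12} in place of Lemma \ref{b4.7}). The only cosmetic omission is the trivial case $a=0$.
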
 

\begin{proof}
If $a = 0$, then the result is obviously true. So assume that $a \neq 0$. By the density of $E(a)$ and the Diagonalization Theorem there exist a $u \in G(A)$, orthogonal rank one projections $p_{1}, \ldots, p_{n}$ and $\lambda_{1}, \ldots, \lambda_{n} \in \mathbb{C}-\left\{0 \right\}$ such that $a = \lambda_{1}up_{1} + \cdots + \lambda_{n}up_{n}$, where $\mathrm{rank}\,(a) = n \geq 1$. Let $p = p_{1} + \cdots +p_{n}$ and observe that $ap = a$ and that
$$a \left(\frac{1}{\lambda_{1}}p_{1} + \cdots + \frac{1}{\lambda_{n}}p_{n}\right) = up.$$
Let $w \in A$ be arbitrary and consider $awa$. Then
\begin{eqnarray*}
awa = apwap & = & a\left(\frac{1}{\lambda_{1}}p_{1} + \cdots + \frac{1}{\lambda_{n}}p_{n}\right)\left(\lambda_{1}p_{1} + \cdots +\lambda_{n}p_{n}\right) wap \\
& = & up\left(\lambda_{1}p_{1} + \cdots +\lambda_{n}p_{n}\right) wap.
\end{eqnarray*}
This shows that $aAa \subseteq upAp$. Conversely, if we let $v \in A$ be arbitrary, then
\begin{eqnarray*}
upvp  & = &  u\left(\lambda_{1}p_{1} + \cdots +\lambda_{n}p_{n}\right)\left(\frac{1}{\lambda_{1}}p_{1} + \cdots + \frac{1}{\lambda_{n}}p_{n}\right)vp \\
& = & a\left(\frac{1}{\lambda_{1}}p_{1} + \cdots + \frac{1}{\lambda_{n}}p_{n}\right)v\left(\frac{1}{\lambda_{1}}p_{1} + \cdots + \frac{1}{\lambda_{n}}p_{n}\right)u^{-1}u\left(\lambda_{1}p_{1} + \cdots +\lambda_{n}p_{n}\right) \\
& = & a\left(\frac{1}{\lambda_{1}}p_{1} + \cdots + \frac{1}{\lambda_{n}}p_{n}\right)v\left(\frac{1}{\lambda_{1}}p_{1} + \cdots + \frac{1}{\lambda_{n}}p_{n}\right)u^{-1}a.
\end{eqnarray*}
This shows that $upAp \subseteq aAa$. Hence, equality holds. In particular, this implies that
$$\mathrm{dim}\,\left(pAp\right) = \mathrm{dim}\,\left(upAp\right) = \mathrm{dim}\,\left(aAa\right) = \left[\mathrm{rank}\,(a)\right]^{2}.$$
Moreover, we have
$$\mathrm{rank}\,(p) = \mathrm{rank}\,\left(p_{1} + \cdots + p_{n}\right) = \sum_{i=1}^{n} \mathrm{rank}\,\left(p_{i}\right) = n = \mathrm{rank}\,(a).$$
Now, recall that $pAp$ is a finite-dimensional closed semisimple subalgebra of $A$ with identity $p$. Also, $\mathrm{rank}_{pAp} (pxp) = \mathrm{rank}_{A} (pxp)$ for all $x \in A$. By the Wedderburn-Artin Theorem, it follows that
$$pAp \cong M_{n_{1}} \left(\mathbb{C}\right) \oplus \cdots \oplus M_{n_{k}} \left(\mathbb{C}\right),$$
where we may assume without loss of generality that $k \geq 1$ and $n_{i} \geq 1$ for each $i \in \left\{1, \ldots, k \right\}$. We claim that $k=1$: Suppose not. We have $n = \mathrm{rank}\,(p) = n_{1} + \cdots +n_{k}$ and $n^{2} = \mathrm{dim}\:pAp = n_{1}^{2} + \cdots +n_{k}^{2}$. However, then
\begin{eqnarray*}
n^{2} = \left(n_{1} + \cdots + n_{k}\right)^{2} & = & n_{1}^{2} + \cdots +n_{k}^{2} + \sum_{i<j}2n_{i}n_{j} \\
& > & n_{1}^{2} + \cdots +n_{k}^{2} = \mathrm{dim}\:pAp = n^{2},
\end{eqnarray*}
which is clearly a contradiction. Hence, $pAp \cong M_{n_{1}} \left(\mathbb{C}\right)$ for some integer $n_{1} \geq 1$. Using the argument in the proof of Theorem \ref{b4.4}, it readily follows that $a = pap + r_{a}$ is a commutator as desired.
\end{proof}

\begin{example}\label{b4.6}
Let $A = M_{2}\left(\mathbb{C}\right) \oplus M_{2}\left(\mathbb{C}\right)$ and let
$$a = \left( \left(\begin{array}{cc} 1 & 0 \\ 0 & -1\end{array}\right), \left(\begin{array}{cc} 1 & 0 \\ 0 & -1\end{array}\right) \right).$$
Then $\mathrm{rank}\,(a) = 4$ and by the classical Shoda's Theorem for matrices it follows that $a = \left[x, y\right]$ for some $x, y \in A = \mathrm{Soc}\:A$. However, $aAa = A$, so 
$$\mathrm{dim}\:aAa = 8 \neq 16 = \left[\mathrm{rank}\,(a)\right]^{2}.$$
This shows that the converse of Theorem \ref{b4.5} does not hold in general.
\end{example}

The aim of the next few results is to prove the highly nontrivial fact that
$$\mathcal{C} = \left\{\left[x, y\right]: x, y \in \mathrm{Soc}\:A \right\}$$
is a vector subspace of $\mathrm{Soc}\:A$.

\begin{lemma}\label{b4.7}
Let $p$ be a rank one projection of $A$. If $q \in J_{p} - \left\{0 \right\}$ is a projection, then $qAq \cong M_{k} \left(\mathbb{C}\right)$ for some integer $k \geq 1$.
\end{lemma}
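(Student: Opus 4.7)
The plan is to invoke Lemma \ref{b3.12} applied to the single element $z_{1} = q$. Since $q \in J_{p}$, this yields a subalgebra $B$ of $J_{p}$ which contains $q$, which is isomorphic to $M_{k}(\mathbb{C})$ for some integer $k \geq 1$, and which most importantly satisfies $qAq \subseteq B$. All the heavy lifting has already been done in Section 3, so this identifies the ambient matrix algebra inside which $qAq$ lives; it remains only to cut down to the corner determined by $q$.

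Next I would argue that in fact $qAq = qBq$. The inclusion $qBq \subseteq qAq$ is immediate from $B \subseteq A$. For the reverse inclusion, any element of $qAq$ has the form $qxq$ with $x \in A$; by Lemma \ref{b3.12} we have $qxq \in B$, and since $q$ is a projection,
$$qxq = q(qxq)q \in qBq.$$
Hence $qAq$ is precisely the corner $qBq$ of the matrix algebra $B$.

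Finally, transport the situation through an algebra isomorphism $\Phi : B \to M_{k}(\mathbb{C})$. Then $Q := \Phi(q)$ is a nonzero projection in $M_{k}(\mathbb{C})$, and a standard change-of-basis argument shows $QM_{k}(\mathbb{C})Q \cong M_{m}(\mathbb{C})$, where $m = \mathrm{rank}(Q) \geq 1$. Since $qAq = qBq$ is mapped onto $QM_{k}(\mathbb{C})Q$ by $\Phi$, we conclude $qAq \cong M_{m}(\mathbb{C})$, which is what was required.

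I do not anticipate any real obstacle: the key point is simply the realization that $q$ sits inside a full matrix subalgebra of $J_{p}$ by Lemma \ref{b3.12}, after which reducing to the corner is routine. The only step worth being careful about is the equality $qAq = qBq$, which relies on the containment $qAq \subseteq B$ rather than the a priori smaller $qBq$.
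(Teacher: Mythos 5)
Your proof is correct, but it takes a genuinely different route from the paper's. The paper applies the Wedderburn--Artin Theorem directly to the finite-dimensional semisimple algebra $qAq$, obtaining $qAq \cong M_{n_{1}}\left(\mathbb{C}\right) \oplus \cdots \oplus M_{n_{k}}\left(\mathbb{C}\right)$, and then rules out two or more summands: if $r$ and $s$ were minimal projections sitting in distinct blocks, then $rxs = 0$ for all $x \in A$, while the minimality of $J_{p}$ (Lemma \ref{b2.2}) forces $J_{r} = J_{s} = J_{p}$, whence $r = r^{2} = \sum_{i} rx_{i}sy_{i} = 0$, a contradiction. You instead invoke Lemma \ref{b3.12} with $z_{1} = q$ to place $qAq$ inside a full matrix subalgebra $B \cong M_{k}\left(\mathbb{C}\right)$ of $J_{p}$, observe that $qAq = qBq$ (the reverse inclusion correctly using $qAq \subseteq B$), and cut down to the corner $QM_{k}\left(\mathbb{C}\right)Q \cong M_{m}\left(\mathbb{C}\right)$ determined by the nonzero idempotent $Q = \Phi(q)$. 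Both arguments are sound, and both rely only on Section 3 material that precedes this lemma, so there is no circularity. Your approach outsources the work to the comparatively heavy constructions of Lemmas \ref{b3.11} and \ref{b3.12}, plus the standard fact that an idempotent corner of $M_{k}\left(\mathbb{C}\right)$ is again a full matrix algebra (via similarity of $Q$ to a diagonal idempotent); what it buys is that no fresh argument is needed. The paper's approach is shorter given Corollary \ref{b3.10} and Lemma \ref{b2.2}, and it makes transparent exactly why only one block can occur: the minimality of the two-sided ideal $J_{p}$ is what prevents $qAq$ from splitting into orthogonal pieces.
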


\begin{proof}
By the Wedderburn-Artin Theorem,
\begin{equation}
qAq \cong M_{n_{1}}\left(\mathbb{C}\right) \oplus \cdots \oplus M_{n_{k}}\left(\mathbb{C}\right).
\label{beq11}
\end{equation}
Assume that the direct sum in (\ref{beq11}) contains at least two nonzero terms. This means that we can find distinct rank one projections $r$ and $s$ such that $rxs = 0$ and $sxr = 0$ for all $x \in A$. Since $q \in J_{p}$, it follows that $r \in J_{p}$. Consequently, $\left\{0 \right\} \neq J_{r} \subseteq J_{p}$. Thus, by Lemma \ref{b2.2} it follows that $J_{r} = J_{p}$. Similarly, $J_{s} = J_{p}$. However, then $r = \sum_{i=1}^{m} x_{i}sy_{i}$ for some $x_{1}, \ldots, x_{m}, y_{1}, \ldots, y_{m} \in A$ and integer $m \geq 1$. But this implies that
$$r = r^{2} = \sum_{i=1}^{m} rx_{i}sy_{i} = 0$$
which is absurd. From this contradiction, the result now follows.
\end{proof}

\begin{lemma}\label{b4.8}
Let $p$ be a rank one projection. If $a \in J_{p}$ and $\mathrm{Tr}\,(a) = 0$, then $a = \left[x, y\right]$ for some $x, y \in J_{p}$. Hence, if $b, c \in \mathcal{C} \cap J_{p}$, then $b+c \in \mathcal{C}$. In particular, $b+c = \left[u, v\right]$ for some $u, v \in J_{p}$.
\end{lemma}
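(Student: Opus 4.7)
The plan is to reduce to the classical Shoda theorem for $M_{k}(\mathbb{C})$ by embedding $a$ into a finite-dimensional full matrix subalgebra of $J_{p}$. I would start by applying Lemma \ref{b3.12} to the singleton $\{a\}$ to obtain a subalgebra $B \subseteq J_{p}$ with $a \in B \cong M_{k}(\mathbb{C})$ for some $k \geq 1$. From the explicit construction in Lemma \ref{b3.8}, the identity of $B$ is $q = p + u_{2}pv_{2} + \cdots + u_{k}pv_{k}$, where the $u_{i}, v_{j}$ are the dual bases supplied by Theorem \ref{b3.7}; a short computation using properties (i)--(iii) there shows that each $u_{i}pv_{i}$ is a minimal (hence rank one) projection and that $\{p, u_{2}pv_{2}, \ldots, u_{k}pv_{k}\}$ is pairwise orthogonal, so $q$ is a sum of $k$ pairwise orthogonal rank one projections.

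Next I would identify $B$ with the corner $qAq$. Since $q \in J_{p}$ is a nonzero projection, Lemma \ref{b4.7} gives $qAq \cong M_{\ell}(\mathbb{C})$ for some $\ell$, and using that the identity of $M_{\ell}(\mathbb{C})$ has rank $\ell$ together with the rank--sum formula for orthogonal projections forces $\ell = \mathrm{rank}(q) = k$. The inclusion $B \subseteq qAq$ is immediate from $q$ being the identity of $B$, and equality follows from $\mathrm{dim}\:B = k^{2} = \mathrm{dim}\:qAq$. Consequently, Lemma \ref{b4.1} yields $\mathrm{Tr}_{A}(a) = \mathrm{Tr}_{qAq}(a) = 0$. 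Fixing an algebra isomorphism $\phi : qAq \to M_{k}(\mathbb{C})$, the functional $\mathrm{tr} \circ \phi$ is constant on the rank one projections of $qAq$ (algebra isomorphisms preserve the property $pAp = \mathbb{C}p$), so Theorem \ref{b4.2} combined with normalisation on a single minimal projection gives $\mathrm{tr} \circ \phi = \mathrm{Tr}_{qAq}$. Hence $\mathrm{tr}(\phi(a)) = 0$, and the classical Shoda theorem produces $X, Y \in M_{k}(\mathbb{C})$ with $\phi(a) = [X, Y]$. Setting $x = \phi^{-1}(X)$ and $y = \phi^{-1}(Y)$ yields $a = [x, y]$ with $x, y \in B \subseteq J_{p}$, as required.

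For the second assertion, if $b, c \in \mathcal{C} \cap J_{p}$ then property (ii) of the trace (vanishing on commutators) forces $\mathrm{Tr}(b) = \mathrm{Tr}(c) = 0$, so $\mathrm{Tr}(b + c) = 0$. Since $b + c \in J_{p}$, the first half of the lemma applied to $b + c$ supplies $u, v \in J_{p}$ with $b + c = [u, v]$, establishing that $\mathcal{C} \cap J_{p}$ is closed under addition and yielding the stronger statement that the witnesses can be chosen inside $J_{p}$.

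The main obstacle I expect is the trace-matching step: pinning down that the spectral trace $\mathrm{Tr}_{A}$, when restricted to $B$, corresponds under the isomorphism with $M_{k}(\mathbb{C})$ to the usual matrix trace. Without the identification $B = qAq$ (with $q$ a sum of orthogonal rank one projections, so that Lemma \ref{b4.1} applies) and the uniqueness statement in Theorem \ref{b4.2}, the hypothesis $\mathrm{Tr}_{A}(a) = 0$ would not translate into the vanishing of the matrix trace of $\phi(a)$, and classical Shoda could not be invoked.
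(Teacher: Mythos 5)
Your proof is correct, but it takes a genuinely different route from the paper's. The paper stays inside Section 4: it applies the Diagonalization Theorem to write $a=\lambda_{1}uq_{1}+\cdots+\lambda_{n}uq_{n}$ with the $q_{i}\in Aa\subseteq J_{p}$, sets $q=q_{1}+\cdots+q_{n}$, and observes that $aq=a$ but not necessarily $qa=a$; it therefore splits $a=qaq+r_{a}$ with $r_{a}=aq-qaq$, applies Lemma \ref{b4.7} and classical Shoda to the corner piece $qaq$, and then absorbs $r_{a}$ by the perturbation trick $\bigl[qxq+r_{a}(\lambda q+qyq)^{-1},\,\lambda q+qyq\bigr]$ already used in Theorem \ref{b4.4}. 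You instead invoke Lemma \ref{b3.12} to embed $a$ itself into a full matrix corner: once you have verified (correctly) that the identity $q$ of $B$ is a sum of pairwise orthogonal minimal projections, that $B=qAq$ by the dimension count against Lemma \ref{b4.7}, and that the spectral trace corresponds to the matrix trace via Lemma \ref{b4.1} and Theorem \ref{b4.2}, classical Shoda applies to $a=qaq$ directly and no correction term is needed. What your route buys is a cleaner endgame with no $r_{a}$; what it costs is the heavier machinery of Lemmas \ref{b3.11}--\ref{b3.12} and the loss of control on $\mathrm{rank}\,(q)$ (which in the paper's construction equals $\mathrm{rank}\,(a)$, hence bounds the ranks of the commutator witnesses — immaterial here since Lemma \ref{b4.8} claims no such bound). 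The paper itself flags this trade-off in the remark following Theorem \ref{b4.4}. Your trace-matching step, which the paper leaves implicit when it cites ``Lemma \ref{b4.1} and the classical Shoda's Theorem,'' is a worthwhile explicit addition. The deduction of the second assertion is the same in both arguments.
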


\begin{proof}
If $a = 0$, then the result trivially holds true. So assume that $a \neq 0$. By the density of $E(a)$ and the Diagonalization Theorem there exist a $u \in G(A)$, mutually orthogonal rank one projections $q_{1}, \ldots, q_{n} \in Aa$ and $\lambda_{1}, \ldots, \lambda_{n} \in \mathbb{C}-\left\{0 \right\}$ such that $a = \lambda_{1}uq_{1} + \cdots +\lambda_{n}uq_{n}$, where $\mathrm{rank}\,(a) = n\geq 1$. Let $q = q_{1} + \cdots +q_{n}$. Then $aq = a$, and so, by the properties of the trace and Lemma \ref{b4.1},
$$ 0 = \mathrm{Tr}_{A}(a) = \mathrm{Tr}_{A}(qaq) = \mathrm{Tr}_{qAq}(qaq).$$
Moreover, by Lemma \ref{b4.7} it follows that $qAq \cong M_{k} \left(\mathbb{C}\right)$. So, by the classical Shoda's Theorem for matrices, we have $qaq = \left[qxq, qyq\right]$ for some $x, y \in A$. Consequently, by the argument used in the proof of Theorem \ref{b4.4} with $\left|\lambda\right|$ sufficiently large we get
$$a = \left[qxq + r_{a}\left(\lambda q + qyq\right)^{-1}, \lambda q + qyq\right],$$
where $r_{a} = aq - qaq$. Of course, since $q \in Aa\subseteq J_{p}$, it follows that $qxq + r_{a}\left(\lambda q + qyq\right)^{-1}$ and $\lambda q + qyq$ both belong to $J_{p}$. This proves the first part of the lemma. The last part now follows from the fact that $b, c \in \mathcal{C} \cap J_{p}$ implies that $b+c \in J_{p}$ and $\mathrm{Tr}\,(b+c) = 0$.
\end{proof}

\begin{theorem}\label{b4.9}
$\mathcal{C} = \left\{\left[x, y\right] : x, y \in \mathrm{Soc}\:A \right\}$ is a vector subspace of $\mathrm{Soc}\:A$.
\end{theorem}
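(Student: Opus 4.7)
The plan is to verify the two subspace axioms. Closure under scalar multiplication is immediate since $\alpha[x,y] = [\alpha x, y]$ and $0 = [0,0] \in \mathcal{C}$, so the entire content of the theorem lies in showing that the sum of two commutators is again a commutator.

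To this end, given $[x_{1}, y_{1}], [x_{2}, y_{2}] \in \mathcal{C}$, I would use Lemma \ref{b2.1} to decompose each of $x_{1}, y_{1}, x_{2}, y_{2} \in \mathrm{Soc}\:A$ as finite sums supported on a common finite subset $\left\{p_{1}, \ldots, p_{m}\right\} \subseteq \mathcal{P}$; say $x_{i} = \sum_{k=1}^{m} x_{i}^{(k)}$ and $y_{i} = \sum_{k=1}^{m} y_{i}^{(k)}$ with $x_{i}^{(k)}, y_{i}^{(k)} \in J_{p_{k}}$, allowing some of these summands to be zero. The pairwise orthogonality $J_{p_{j}}J_{p_{k}} = \left\{0\right\}$ for $j \neq k$ asserted in Lemma \ref{b2.1} then collapses all cross terms and yields
\[
\left[x_{i}, y_{i}\right] = \sum_{k=1}^{m} \left[x_{i}^{(k)}, y_{i}^{(k)}\right],
\]
with the $k$-th summand lying in $J_{p_{k}}$. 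Consequently,
\[
\left[x_{1}, y_{1}\right] + \left[x_{2}, y_{2}\right] = \sum_{k=1}^{m} c_{k}, \quad c_{k} := \left[x_{1}^{(k)}, y_{1}^{(k)}\right] + \left[x_{2}^{(k)}, y_{2}^{(k)}\right] \in \mathcal{C} \cap J_{p_{k}}.
\]

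Next, the key step is to apply Lemma \ref{b4.8} inside each ideal $J_{p_{k}}$ separately: this gives $u_{k}, v_{k} \in J_{p_{k}}$ with $c_{k} = \left[u_{k}, v_{k}\right]$ for each $k$. Setting $u := \sum_{k=1}^{m} u_{k}$ and $v := \sum_{k=1}^{m} v_{k}$, which both lie in $\mathrm{Soc}\:A$, and again invoking orthogonality of the $J_{p_{k}}$ to eliminate cross terms, one obtains
\[
\left[u, v\right] = \sum_{k=1}^{m} \left[u_{k}, v_{k}\right] = \sum_{k=1}^{m} c_{k} = \left[x_{1}, y_{1}\right] + \left[x_{2}, y_{2}\right],
\]
and closure under addition is established.

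The main obstacle is the intra-ideal statement, and this is exactly what Lemma \ref{b4.8} absorbs via the Wedderburn--Artin structure of $qAq$ (secured through Lemma \ref{b4.7}) combined with the classical Shoda's Theorem for matrices. Once that lemma is in hand, the orthogonality of the family $\left\{J_{p} : p \in \mathcal{P}\right\}$ reduces the problem of summing two global commutators to a coordinate-wise question, and the reassembly into a single global commutator costs nothing beyond the orthogonality relations.
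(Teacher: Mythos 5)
Your proposal is correct and follows essentially the same route as the paper: decompose the entries of each commutator over the pairwise orthogonal ideals $J_{p}$ via Lemma \ref{b2.1}, use orthogonality to kill cross terms, invoke Lemma \ref{b4.8} to merge the two contributions inside each $J_{p}$ into a single commutator with entries in $J_{p}$, and reassemble by orthogonality. The only cosmetic difference is that you work over a common finite index set with zero summands allowed, whereas the paper treats the matching and non-matching ideals separately.
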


\begin{proof}
If $a \in \mathcal{C}$, then it readily follows that $\lambda a \in \mathcal{C}$ for all $\lambda \in \mathbb{C}$. So it remains to show that if $a, b \in \mathcal{C}$, then $a+b \in \mathcal{C}$: Let $a, b \in \mathcal{C}$ be arbitrary. Let $\mathcal{P}$ be the class of minimal projections from Lemma \textnormal{\ref{b2.1}} generating the $J_{p}$. By Lemma \ref{b2.1} it follows that
$$a = \left[w_{1}, x_{1}\right] + \cdots + \left[w_{n}, x_{n}\right],$$
where $w_{i}, x_{i} \in J_{q_{i}}$ and $q_{i} \in \mathcal{P}$ for each $i \in \left\{1, \ldots, n \right\}$, and where $q_{i} \neq q_{j}$ for $i \neq j$. Similarly,
$$b = \left[y_{1}, z_{1}\right] + \cdots + \left[y_{k}, z_{k}\right],$$
where $y_{i}, z_{i} \in J_{p_{i}}$ and $p_{i} \in \mathcal{P}$ for each $i \in \left\{1, \ldots, k \right\}$, and where $p_{i} \neq p_{j}$ for $i \neq j$. Consequently,
$$a + b = \left[w_{1}, x_{1}\right] + \cdots + \left[w_{n}, x_{n}\right] + \left[y_{1}, z_{1}\right] + \cdots + \left[y_{k}, z_{k}\right].$$
Now, if $q_{i} = p_{j}$ for some $i \in \left\{1, \ldots, n \right\}$ and $j \in \left\{1, \ldots, k \right\}$, then by Lemma \ref{b4.8} it follows that
$$\left[w_{i}, x_{i}\right] + \left[y_{j}, z_{j}\right] = \left[x, y\right]$$
for some $x, y \in J_{p_{i}} = J_{q_{j}}$. So, in order to simplify our notation we may assume, without loss of generality, that $q_{i} \neq p_{j}$ for all $i \in \left\{1, \ldots, n \right\}$ and $j \in \left\{1, \ldots, k \right\}$. Thus, using the fact that the two-sided ideals in $\left\{J_{p} : p \in \mathcal{P} \right\}$ are pairwise orthogonal, we get
$$a+b = \left[w_{1} + \cdots +w_{n} +y_{1} + \cdots +y_{k}, x_{1} + \cdots +x_{n} +z_{1} + \cdots +z_{k}\right].$$
The elements in this commutator are of course in $\mathrm{Soc}\:A$, and so, $a+b \in \mathcal{C}$. This completes the proof. 
\end{proof}

\begin{corollary}\label{b4.10}
Let $a \in \mathcal{C}$ and let $p$ be any finite-rank projection such that $ap = a$. Then $pap \in \mathcal{C}$.
\end{corollary}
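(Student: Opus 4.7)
The plan is to reduce to Lemma \ref{b4.8} via the orthogonal decomposition supplied by Lemma \ref{b2.1}. First, since $a \in \mathcal{C}$, I would write $a = [x, y]$ with $x, y \in \mathrm{Soc}\:A$, and then decompose $x = \sum_{i=1}^{n} x_i$ and $y = \sum_{i=1}^{n} y_i$ with $x_i, y_i \in J_{q_i}$ for finitely many pairwise distinct $q_i \in \mathcal{P}$ (padding with zeros so that the same index set appears in both sums). Because $J_{q_i} J_{q_j} = \{0\}$ for $i \neq j$, every cross commutator $[x_i, y_j]$ vanishes, and hence $a = \sum_{i=1}^{n} a_i$ with $a_i := [x_i, y_i] \in J_{q_i} \cap \mathcal{C}$.

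The hard part will be to verify that $a_i p = a_i$ for each $i$; this is the step I expect to be the main obstacle. The hypothesis $ap = a$ gives $\sum_i (a_i p - a_i) = 0$, and multiplying this equation on the left by an arbitrary $z \in J_{q_i}$ kills every $j \neq i$ summand by pairwise orthogonality (since $z a_j \in J_{q_i} J_{q_j} = \{0\}$, and hence also $z a_j p = 0$), leaving $z(a_i p - a_i) = 0$ for all $z \in J_{q_i}$. Setting $b := a_i p - a_i \in J_{q_i}$, I would then have $J_{q_i} \cdot b = \{0\}$; then the von Neumann regularity of socle elements forces $b = 0$, because writing $b = bwb$ one gets $bw \in J_{q_i}$ and thus $b = (bw)b \in J_{q_i} \cdot b = \{0\}$. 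Pairwise orthogonality alone is not quite enough here — von Neumann regularity is the essential extra input that lets one isolate the component in a single $J_{q_i}$.

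With $a_i p = a_i$ in hand, the remainder should be routine. The cyclic property of the trace will give $\mathrm{Tr}(p a_i p) = \mathrm{Tr}(a_i p) = \mathrm{Tr}(a_i) = 0$, where the last equality uses that $a_i$ is itself a commutator. Since $p a_i p \in p J_{q_i} p \subseteq J_{q_i}$, Lemma \ref{b4.8} then delivers $p a_i p \in \mathcal{C}$ for every $i$. Finally, since $\mathcal{C}$ is a vector subspace by Theorem \ref{b4.9}, summing yields $pap = \sum_{i=1}^{n} p a_i p \in \mathcal{C}$, completing the proof.
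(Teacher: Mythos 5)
Your argument is correct, but it takes a much heavier route than the paper's, which is a two-line observation: since $ap=a$, one has $pap = (pap-ap) + a$, and a direct computation shows $pap-ap = \left[pap-ap,\,p\right]$ (indeed $(pap-ap)p = pap-ap$ while $p(pap-ap)=0$), so both summands lie in $\mathcal{C}$ and Theorem \ref{b4.9} finishes the proof. No decomposition into the ideals $J_{q_i}$, no trace computation, no appeal to Lemma \ref{b4.8} or von Neumann regularity is needed. Your proof does work: the splitting $a=\sum_i [x_i,y_i]$ via Lemma \ref{b2.1} and the vanishing of cross commutators is fine; your isolation of the identity $a_ip=a_i$ via left multiplication by $J_{q_i}$ together with von Neumann regularity is a legitimate (and the genuinely nontrivial) step; and the componentwise application of Lemma \ref{b4.8} followed by Theorem \ref{b4.9} is sound. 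What your approach buys is slightly more information --- it exhibits $pap$ explicitly as a sum of commutators of elements of the individual ideals $J_{q_i}$ --- but that refinement is already delivered by Corollary \ref{b4.11}, so for the statement at hand the paper's commutator identity is the more economical path; it is worth internalizing as a standard trick for absorbing ``corner'' corrections like $pap-ap$ into the commutator subspace.
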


\begin{proof}
Simply observe that $pap = pap - ap + a$ and that $pap -ap = \left[pap - ap, p\right]$. Now apply Theorem \ref{b4.9}.
\end{proof}

\begin{corollary}\label{b4.11}
Let $\mathcal{P}$ be the class of minimal projections from Lemma \textnormal{\ref{b2.1}} generating the $J_{p}$. Then $a \in \mathcal{C}$ if and only if $a = a_{1} + \cdots + a_{n}$, where $a_{i} \in J_{p_{i}}$, $p_{i} \in \mathcal{P}$ and $\mathrm{Tr}_{A} \left(a_{i}\right) = 0$ for each $i \in \left\{1, \ldots, n \right\}$.
\end{corollary}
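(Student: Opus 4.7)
The plan is to derive both implications essentially for free from the preparatory results already assembled. The reverse implication is the cleaner of the two: given $a = a_1 + \cdots + a_n$ with each $a_i \in J_{p_i}$ and $\mathrm{Tr}_A(a_i) = 0$, I would apply Lemma \ref{b4.8} to each summand to obtain $x_i, y_i \in J_{p_i}$ with $a_i = [x_i, y_i] \in \mathcal{C}$. Then Theorem \ref{b4.9} (closure of $\mathcal{C}$ under addition) immediately yields $a = \sum_{i=1}^n a_i \in \mathcal{C}$.

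For the forward implication, given $a \in \mathcal{C}$, by definition $a = [x,y]$ for some $x, y \in \mathrm{Soc}\:A$. My plan is to invoke Lemma \ref{b2.1} to decompose $x = \sum_{p \in S} x_p$ and $y = \sum_{p \in S} y_p$ over a common finite subset $S \subseteq \mathcal{P}$ (padding with zeros where necessary), with each $x_p, y_p \in J_p$. The essential observation is the pairwise orthogonality $J_p J_q = J_q J_p = \{0\}$ for $p \neq q$ from Lemma \ref{b2.1}, which annihilates every cross term:
$$
[x, y] \;=\; \sum_{p, q \in S} \bigl(x_p y_q - y_q x_p\bigr) \;=\; \sum_{p \in S} [x_p, y_p].
$$
Setting $a_p := [x_p, y_p]$, the ideal property of $J_p$ gives $a_p \in J_p$, and property (ii) of the trace gives $\mathrm{Tr}_A(a_p) = \mathrm{Tr}_A(x_p y_p) - \mathrm{Tr}_A(y_p x_p) = 0$. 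Relabelling the nonzero $a_p$ as $a_1, \ldots, a_n$ produces the required decomposition.

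No substantial obstacle remains; the argument is really a bookkeeping exercise once the structural decomposition of $\mathrm{Soc}\:A$ into pairwise orthogonal two-sided ideals (Lemma \ref{b2.1}) and the local Shoda-type statement (Lemma \ref{b4.8}) are in hand. The only point deserving a moment's care is the vanishing of the cross terms in $[x,y]$, which is precisely what makes the decomposition $J_p$-coordinatewise and aligns the two directions of the equivalence.
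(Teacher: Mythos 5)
Your proof is correct and follows essentially the same route as the paper: the forward direction uses the decomposition from Lemma \ref{b2.1} together with the orthogonality of the $J_{p}$ to kill the cross terms and the trace property $\mathrm{Tr}(ab)=\mathrm{Tr}(ba)$ to get tracelessness of each component, and the reverse direction combines Lemma \ref{b4.8} with Theorem \ref{b4.9}. The paper states this more tersely, but your bookkeeping fills in exactly the intended details.
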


\begin{proof}
The forward implication follows from Lemma \ref{b2.1} since
$$a = \left[w_{1}, x_{1}\right] + \cdots + \left[w_{n}, x_{n}\right],$$
where $w_{i}, x_{i} \in J_{p_{i}}$, $p_{i} \in \mathcal{P}$ for each $i \in \left\{1, \ldots, n \right\}$ and $p_{i} \neq p_{j}$ for $i \neq j$. The reverse implication follows from Lemma \ref{b4.8} and Theorem \ref{b4.9}.
\end{proof}

\begin{lemma}\label{b4.12}
Let $a \in \mathrm{Soc}\:A$ and suppose that $\sigma (a) = \left\{0 \right\}$. Then $a \in \mathcal{C}$.
\end{lemma}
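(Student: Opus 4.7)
The plan is to exploit the canonical decomposition of $\mathrm{Soc}\:A$ given by Lemma \ref{b2.1} together with the characterization of $\mathcal{C}$ in Corollary \ref{b4.11}. First I would write $a = a_1 + \cdots + a_n$ with $a_i \in J_{p_i}$ for pairwise distinct $p_i \in \mathcal{P}$. Since the two-sided ideals $J_{p_i}$ are pairwise orthogonal, $a_i a_j = 0$ whenever $i \neq j$, so every mixed term in the expansion of $(a_1 + \cdots + a_n)^k$ vanishes and
$$a^k = a_1^k + \cdots + a_n^k \qquad (k \geq 1).$$
By Corollary \ref{b4.11}, it will then suffice to show $\mathrm{Tr}(a_i) = 0$ for each $i$.

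Next, the hypothesis $\sigma(a) = \{0\}$ together with the Spectral Mapping Theorem give $\sigma(a^k) = \{0\}$, hence $\mathrm{Tr}(a^k) = 0$ for every $k \geq 1$. Combining linearity of the trace with the spectral identity
$$\mathrm{Tr}(a_i^k) = \sum_{\lambda \in \sigma'(a_i)} \lambda^k\, m(\lambda, a_i)$$
(which follows from the multiplicity version of spectral mapping for finite-rank elements in \cite{aupetitmoutontrace}) would yield
$$\sum_{\lambda \in \Lambda} N(\lambda)\, \lambda^k = 0 \qquad (k \geq 1),$$
where $\Lambda = \bigcup_{i=1}^{n} \sigma'(a_i)$ is a finite set of pairwise distinct nonzero complex numbers and $N(\lambda) = \sum_{i \colon \lambda \in \sigma'(a_i)} m(\lambda, a_i)$ is a strictly positive integer for every $\lambda \in \Lambda$.

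Finally, restricting to $k = 1, \ldots, |\Lambda|$ produces a square linear system whose coefficient matrix is Vandermonde-like and nonsingular (the elements of $\Lambda$ being distinct and nonzero), forcing $N(\lambda) = 0$ for every $\lambda \in \Lambda$. This is incompatible with $N(\lambda) \geq 1$ unless $\Lambda = \emptyset$, so $\sigma(a_i) \subseteq \{0\}$ and therefore $\mathrm{Tr}(a_i) = 0$ for every $i$. Corollary \ref{b4.11} then delivers $a \in \mathcal{C}$. The one point that will require some care is the spectral trace identity invoked in the second step; once that is granted, the remainder is routine bookkeeping and a classical Vandermonde argument.
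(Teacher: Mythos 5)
Your route is genuinely different from the paper's. The paper never touches the decomposition into the ideals $J_{p}$: it writes $a=ap$ for a finite-rank projection $p$ coming from the Diagonalization Theorem, observes $\sigma_{pAp}(pap)=\{0\}$, applies Wedderburn--Artin to the finite-dimensional algebra $pAp$, notes that each matrix block of $pap$ is nilpotent hence a commutator by classical Shoda, and then recovers $a$ from $pap$ via the $r_{a}=ap-pap$ trick of Theorem \ref{b4.4}. You instead split $a=a_{1}+\cdots+a_{n}$ along the pairwise orthogonal ideals $J_{p_{i}}$ and aim for $\mathrm{Tr}(a_{i})=0$, after which Corollary \ref{b4.11} (which legitimately precedes this lemma) finishes the job. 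That reduction is sound, and your Vandermonde argument is internally correct.

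The one real weak point is exactly the one you flag: the identity $\mathrm{Tr}(a_{i}^{k})=\sum_{\lambda\in\sigma'(a_{i})}\lambda^{k}m(\lambda,a_{i})$ is nowhere available in the paper or in the quoted portions of \cite{aupetitmoutontrace}; in this abstract setting it amounts to a spectral mapping theorem for multiplicities and would need its own (nontrivial) proof via Riesz projections. Fortunately you do not need it, nor the Vandermonde system: since $a_{i}b=ba_{i}=0$ for $b=\sum_{j\neq i}a_{j}$, one has $(\lambda\mathbf{1}-a_{i})(\lambda\mathbf{1}-b)=\lambda(\lambda\mathbf{1}-a)$ with commuting factors, so for $\lambda\neq 0$ the invertibility of $\lambda\mathbf{1}-a$ forces that of $\lambda\mathbf{1}-a_{i}$; hence $\sigma(a_{i})=\{0\}$ and $\mathrm{Tr}(a_{i})=\sum_{\lambda\in\sigma(a_{i})}\lambda\,m(\lambda,a_{i})=0$ directly from the definition of the trace. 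With that substitution your proof is complete and arguably shorter than the paper's; what the paper's version buys in exchange is independence from Lemma \ref{b4.8}, Theorem \ref{b4.9} and Corollary \ref{b4.11}, plus the explicit commutator representation $a=pap+r_{a}$ that is reused in Theorem \ref{b4.13}.
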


\begin{proof}
If $a = 0$, then the result is obviously true. So assume that $a \neq 0$. By the density of $E(a)$ and the Diagonalization Theorem there exist a $u \in G(A)$, orthogonal rank one projections $p_{1}, \ldots, p_{n}$ and $\lambda_{1}, \ldots, \lambda_{n} \in \mathbb{C}-\left\{0 \right\}$ such that $a = \lambda_{1}up_{1} + \cdots + \lambda_{n}up_{n}$, where $\mathrm{rank}\,(a) = n \geq 1$. Let $p = p_{1} + \cdots +p_{n}$ and observe that $ap = a$. Hence, by Jacobson's Lemma it follows that $\left\{0 \right\} = \sigma_{A} (pap) = \sigma_{pAp} (pap)$. Now, by the Wedderburn-Artin Theorem it follows that $pAp$ is isomorphic as an algebra to
$$M = M_{n_{1}} \left(\mathbb{C}\right) \oplus \cdots \oplus M_{n_{1}} \left(\mathbb{C}\right).$$
Let $\phi : pAp \rightarrow M$ be the algebra isomorphism, and let $M_{i} = M_{n_{i}} \left(\mathbb{C}\right)$ for each $i \in \left\{1, \ldots, k \right\}$. We claim that $pap \in \mathcal{C}$: To prove our claim it will suffice to show that $\phi (pap) = \left(a_{1}, \ldots, a_{k}\right)$ is a commutator in $M$. Since
$$ \left\{0 \right\} = \sigma_{pAp} (pap) = \sigma_{M} \left(\phi(pap)\right) = \bigcup_{i=1}^{k} \sigma_{M_{i}} \left(a_{i}\right),$$
it readily follows that $a_{i}$ is a nilpotent and hence traceless element of $M_{i}$ for each $i \in \left\{1, \ldots, k \right\}$. So by Shoda's Theorem for matrices, it follows that $a_{i}$ is a commutator of $M_{i}$ for each $i \in \left\{1, \ldots, k \right\}$. By the pointwise definition of multiplication in the direct sum, this proves our claim. It is now possible to proceed as in the proof of Theorem \ref{b4.4} and conclude that $a = pap +r_{a} \in \mathcal{C}$. So the lemma is proved. 
\end{proof}

\begin{theorem}\label{b4.13}
Let $a \in \mathrm{Soc}\:A$ with $\mathrm{Tr}\,(a) = 0$, and let $p$ be the Riesz projection associated with $\sigma'(a)$ and $a$. If $\mathrm{dim}\:pAp = \left[\mathrm{rank}\,(p)\right]^{2}$, then $a \in \mathcal{C}$. 
\end{theorem}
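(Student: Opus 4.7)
The plan is to split $a$ via the Riesz projection $p$ into a nilpotent part and a part living inside a full matrix algebra, then show that both parts lie in $\mathcal{C}$ and appeal to Theorem \ref{b4.9} to combine them. Because $p$ is obtained by applying the holomorphic functional calculus to $a$ and a function equal to $1$ on a neighborhood of $\sigma'(a)$ and $0$ on a neighborhood of $0$, we have $ap = pa$, and so
\begin{equation*}
a = pap + (\mathbf{1}-p)a(\mathbf{1}-p),
\end{equation*}
with $\sigma\bigl((\mathbf{1}-p)a(\mathbf{1}-p)\bigr) = \{0\}$.

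The first step is to handle the nilpotent piece: since $(\mathbf{1}-p)a(\mathbf{1}-p) \in \mathrm{Soc}\:A$ has spectrum $\{0\}$, Lemma \ref{b4.12} places it in $\mathcal{C}$, and its trace is $0$; linearity of $\mathrm{Tr}$ together with the hypothesis $\mathrm{Tr}\,(a) = 0$ then forces $\mathrm{Tr}\,(pap) = 0$.

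The second step is to analyse $pAp$. Applying the Wedderburn-Artin Theorem (Corollary \ref{b3.10}) to the finite-dimensional semisimple algebra $pAp$ gives
$$pAp \cong M_{n_{1}}\left(\mathbb{C}\right) \oplus \cdots \oplus M_{n_{k}}\left(\mathbb{C}\right).$$
Using (\ref{beq2.2}) and additivity of rank on orthogonal projections I obtain $\mathrm{rank}\,(p) = n_1 + \cdots + n_k$, while $\mathrm{dim}\:pAp = n_1^2 + \cdots + n_k^2$. The hypothesis $\mathrm{dim}\:pAp = [\mathrm{rank}\,(p)]^2$ then reduces to $(n_1 + \cdots + n_k)^2 = n_1^2 + \cdots + n_k^2$; since each $n_i \geq 1$, the cross-terms force $k = 1$, so $pAp \cong M_{n_1}(\mathbb{C})$. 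This is precisely the $k=1$ argument from the proof of Theorem \ref{b4.5}. Now Lemma \ref{b4.1} gives $\mathrm{Tr}_{pAp}(pap) = \mathrm{Tr}_A(pap) = 0$, and the classical Shoda's Theorem in $M_{n_1}(\mathbb{C})$ produces $x, y \in A$ with $pap = [pxp, pyp]$. Since $pxp$ and $pyp$ have finite rank they lie in $\mathrm{Soc}\:A$, so $pap \in \mathcal{C}$. Theorem \ref{b4.9} then lets me add the two commutator pieces to conclude $a \in \mathcal{C}$.

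The point that I expect needs the most care is confirming the spectral splitting of $a$ via $p$: that $ap = pa$ and that $\sigma\bigl((\mathbf{1}-p)a(\mathbf{1}-p)\bigr) = \{0\}$ both follow from the holomorphic functional calculus and formula (\ref{beq1}), but they should be recorded carefully. Apart from this, the argument is a straightforward synthesis of Lemma \ref{b4.1}, Lemma \ref{b4.12}, the dimension argument of Theorem \ref{b4.5}, and Theorem \ref{b4.9}.
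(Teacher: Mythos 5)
Your proposal is correct and follows essentially the same route as the paper: split $a$ via the Riesz projection into $pap$ (handled by the $k=1$ Wedderburn--Artin dimension count from Theorem \ref{b4.5}, Lemma \ref{b4.1} and classical Shoda) plus the quasinilpotent remainder (handled by Lemma \ref{b4.12}), then combine with Theorem \ref{b4.9}. Note that since $ap=pa$ your piece $(\mathbf{1}-p)a(\mathbf{1}-p)$ equals the paper's $(\mathbf{1}-p)a$, so the decompositions coincide exactly.
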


\begin{proof}
By the Holomorphic Functional Calculus it follows that $p \in Aa$, $ap = pa = pap$ and that $\sigma \left(\left(\mathbf{1}-p\right)a\right) = \left\{0 \right\}$. Thus, in particular, $\mathrm{Tr}\,\left(\left(\mathbf{1}-p\right)a\right) = 0$. Hence, by the linearity of the trace it follows that $\mathrm{Tr}\,(pap) = 0$. Now, since $\mathrm{dim}\:pAp = \left[\mathrm{rank}\,(p)\right]^{2}$, it follows from the argument used in the proof of Theorem \ref{b4.5} that $pAp \cong M_{n}\left(\mathbb{C}\right)$. Hence, by Lemma \ref{b4.1} and the classical Shoda's Theorem for matrices that $pap = \left[pxp, pyp\right]$ for some $x, y \in A$. Moreover, since  $\left(\mathbf{1}-p\right)a \in \mathrm{Soc}\:A$ and $\sigma \left(\left(\mathbf{1}-p\right)a\right) = \left\{0 \right\}$, it follows from Lemma \ref{b4.12} that $\left(\mathbf{1}-p\right)a \in \mathcal{C}$. Therefore, since $\mathcal{C}$ is a vector space by Theorem \ref{b4.9} and $a = \left(\mathbf{1}-p\right)a + pap$, it follows that $a \in \mathcal{C}$ as desired. 
\end{proof}

\section{Characterizations of Central Socles}

If $\mathrm{Soc}\:A = \left\{0 \right\}$, then obviously $\mathrm{Soc}\:A \subseteq Z(A)$. For this reason we shall assume throughout this section that $\mathrm{Soc}\:A \neq \left\{0 \right\}$.

\begin{lemma}\label{b5.1}
$\mathrm{Soc}\:A \subseteq Z(A)$ if and only if 
\begin{equation}
xp = x \Leftrightarrow px = x
\label{beq12}
\end{equation}
for all $x \in \mathrm{Soc}\:A$ and rank one projections $p$ of $A$.
\end{lemma}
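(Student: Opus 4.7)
The plan is to prove both directions using that any rank one projection lies in $\mathrm{Soc}\,A$, together with a judicious choice of test elements of the form $(\mathbf{1}-p)yp$ and $py(\mathbf{1}-p)$.

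For the forward implication, I would argue almost instantly: if $\mathrm{Soc}\,A \subseteq Z(A)$ and $p$ is any rank one projection of $A$, then $p \in \mathrm{Soc}\,A \subseteq Z(A)$, so $p$ commutes with every $x \in \mathrm{Soc}\,A$. Consequently $xp = px$, and the equivalence $xp = x \Leftrightarrow px = x$ is immediate.

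For the reverse implication, the first step is to deduce from (\ref{beq12}) that every rank one projection $p$ is central. Fix $y \in A$ and set $x := (\mathbf{1}-p)yp$ and $x' := py(\mathbf{1}-p)$; both lie in $J_p \subseteq \mathrm{Soc}\,A$. A direct computation gives $xp = x$ and $px' = x'$. Applying (\ref{beq12}) (in both directions of the biconditional) yields $px = x$ and $x'p = x'$, that is,
\[
p(\mathbf{1}-p)yp = (\mathbf{1}-p)yp \quad \text{and} \quad py(\mathbf{1}-p)p = py(\mathbf{1}-p).
\]
Since $p(\mathbf{1}-p) = (\mathbf{1}-p)p = 0$, both left-hand sides vanish, giving $yp = pyp = py$. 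As $y \in A$ was arbitrary, $p \in Z(A)$.

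The second step promotes centrality of rank one projections to centrality of the whole socle. Since each rank one projection $p$ is central, $Ap \subseteq pAp = \mathbb{C}p$ by minimality of $p$, and symmetrically $pA = \mathbb{C}p$. Therefore every generator $xpy$ of $J_p$ lies in $\mathbb{C}p$, which shows $J_p = \mathbb{C}p \subseteq Z(A)$. By Lemma \ref{b2.1}, $\mathrm{Soc}\,A$ is a sum of such ideals $J_p$, so $\mathrm{Soc}\,A \subseteq Z(A)$.

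The only subtle point — and the part that carries the whole argument — is recognizing that the biconditional in (\ref{beq12}) must be fed the two asymmetric test elements $(\mathbf{1}-p)yp$ and $py(\mathbf{1}-p)$ so that each direction of the biconditional forces one of the two off-diagonal Peirce components of $y$ relative to $p$ to vanish; everything after that is routine.
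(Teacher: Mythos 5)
Your proof is correct and uses essentially the same mechanism as the paper's: feed suitable test elements built from an arbitrary $y\in A$ into the biconditional --- your $(\mathbf{1}-p)yp$ and $py(\mathbf{1}-p)$ play exactly the role of the paper's $xp$ and $px$ --- to force $yp = pyp = py$, so every rank one projection is central. The only difference is presentational: the paper argues by contraposition and asserts without comment that a non-central socle yields a non-central rank one projection, whereas you justify that reduction explicitly via Lemma \ref{b2.1}, which makes your version, if anything, slightly more complete.
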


\begin{proof}
The forward implication is obvious. Assume next that $\mathrm{Soc}\:A \not\subseteq Z(A)$. Then there exists a rank one projection $p$ such that $ p \notin Z(A)$. Hence, $xp \neq px$ for some $x \in A$. However, if (\ref{beq12}) holds, then 
$$xp^{2} = xp \Rightarrow pxp = xp \;\, \mathrm{and}\;\, p^{2}x = px \Rightarrow pxp = px.$$ 
But then $xp=px$ which is absurd. So (\ref{beq12}) dos not hold. This completes the proof.
\end{proof}

\begin{lemma}\label{b5.2}
$\mathrm{Soc}\:A \subseteq Z(A)$ if and only if for all rank one projections $p$ and $q$ of $A$, $\mathrm{dim}\:pAq = 1 \Rightarrow p = \alpha q$ for some $\alpha \in \mathbb{C}$.
\end{lemma}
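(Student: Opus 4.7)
The plan is to treat the two implications separately, with the reverse implication carrying the substance of the argument.

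For the forward direction, I would assume $\mathrm{Soc}\:A \subseteq Z(A)$ and let $p, q$ be rank one projections with $\mathrm{dim}\:pAq = 1$. Since $p$ and $q$ are central, one has $pxq = pqx$ for every $x \in A$, so $pAq = pqA$. In particular $pq \neq 0$; and because $p$ and $q$ commute, $pq$ is an idempotent lying simultaneously in $pAp = \mathbb{C}p$ and $qAq = \mathbb{C}q$. This gives $pq = \beta p = \gamma q$ with $\beta, \gamma \in \mathbb{C}-\{0\}$, whence $p = (\gamma/\beta)q$.

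For the reverse direction my plan is to show that every rank one projection $p$ is central; once this is done, the biconditional $xp = x \Leftrightarrow px = x$ in Lemma \ref{b5.1} is automatic and $\mathrm{Soc}\:A \subseteq Z(A)$ follows. Since $p \in Z(A)$ is equivalent to $pA(\mathbf{1}-p) = \{0\}$ and $(\mathbf{1}-p)Ap = \{0\}$, I would argue by contradiction. Suppose $a := py(\mathbf{1}-p) \neq 0$ for some $y \in A$ (the other case being symmetric). Then $pa = a$, $ap = 0$, and $a^{2} = 0$. The crucial construction is the candidate
\begin{equation*}
p' := p + a.
\end{equation*}
A direct check gives $(p')^{2} = p'$; and since $a^{2} = 0$ yields $(\mathbf{1}-a)^{-1} = \mathbf{1}+a$, one computes $p' = (\mathbf{1}-a)p(\mathbf{1}-a)^{-1}$, so $p'$ is similar to $p$ and hence a rank one projection, distinct from $p$ because $a \neq 0$.

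The main technical step is then to verify $\mathrm{dim}\:pAp' = 1$. Writing $pxp = \tau_{p}(x)p$ via minimality of $p$, one finds
\begin{equation*}
pxp' = pxp + pxa = \tau_{p}(x)p + \tau_{p}(x)a = \tau_{p}(x)p',
\end{equation*}
which together with $pp' = p'$ gives $pAp' = \mathbb{C}p'$. The hypothesis then forces $p = \alpha p'$; comparing $p^{2} = p$ and $(p')^{2} = p'$ forces $\alpha^{2} = \alpha$, so $\alpha = 1$ and $p = p'$, contradicting $a \neq 0$. In the symmetric case $a := (\mathbf{1}-p)yp \neq 0$, the same formula $p' := p + a$ yields a rank one projection with $\mathrm{dim}\:p'Ap = 1$ and produces the same contradiction. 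The main obstacle is isolating this construction: once one recognises that any non-commutativity witness $a$ delivers, via the transvection $\mathbf{1}-a$, a second rank one projection for which $pAp'$ (or $p'Ap$) collapses to $\mathbb{C}p'$, every remaining step is a short calculation using $a^{2}=0$ and the minimality of $p$.
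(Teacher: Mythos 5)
Your proof is correct, and while the forward implication is essentially the paper's own argument (minimality of $p$ and $q$ applied to the commuting nonzero idempotent $pq$), your reverse implication takes a genuinely different route. The paper argues by contradiction through Lemma \ref{b5.1}: a failure of centrality produces $x \in \mathrm{Soc}\:A$ and a rank one projection $p$ with $xp = x$ but $px \neq x$; the density of $E(x)$ and the Diagonalization Theorem factor $x = qu$ with $q$ a rank one projection and $u$ invertible, so $0 \neq x \in qAp$, and the cited inequality $\mathrm{dim}\:qAp \leq 1$ from \cite{puhltrace} forces $\mathrm{dim}\:qAp = 1$, whence $q = \lambda p$ and a one-line computation gives $px = x$, a contradiction. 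You instead take an arbitrary witness $a = py(\mathbf{1}-p) \neq 0$ of non-centrality and build the conjugate idempotent $p' = p + a = (\mathbf{1}-a)p(\mathbf{1}-a)^{-1}$, verifying directly that $pAp' = \mathbb{C}p'$, so the hypothesis forces $p = \alpha p'$ and then $\alpha = 1$, i.e. $a = 0$. Your construction is more self-contained: it avoids the Diagonalization Theorem and the external dimension bound entirely, and it uses Lemma \ref{b5.1} only in its trivial direction (centrality of all rank one projections makes the biconditional (\ref{beq12}) automatic), whereas the paper leans on Lemma \ref{b5.1} to produce its witness and on two quoted results to finish. The price is a slightly longer computation, but every step you indicate checks out: $(p')^2 = p'$ from $ap = 0 = a^2$, $pxa = pxpa = \tau_p(x)a$ from $a = pa$, $\mathrm{rank}\,(p') = 1$ by similarity invariance of the rank, and $\alpha^2 = \alpha$ with $\alpha \neq 0$ (since $p \neq 0$) giving $p = p'$; the symmetric case $a = (\mathbf{1}-p)yp$ goes through verbatim with $p'Ap = \mathbb{C}p'$.
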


\begin{proof}
Suppose that $\mathrm{Soc}\:A \subseteq Z(A)$. Let $p$ and $q$ be any rank one projections of $A$ such that $\mathrm{dim}\:pAq = 1$. Then, in particular, $pq=qp\neq 0$. Hence, by the minimality of $p$ and $q$ it follows that $pq = qpq = \lambda q$ and $pq = pqp = \beta p$ for some $\lambda, \beta \in \mathbb{C}$. Thus, $p = \frac{\lambda}{\beta}q$. For the converse, suppose that $\mathrm{dim}\:pAq = 1$ implies that $p = \alpha q$ for some $\alpha \in \mathbb{C}$ for all rank one projections $p$ and $q$ of $A$ but that $\mathrm{Soc}\:A \not\subseteq Z(A)$. By Lemma \ref{b5.1} there exist an $x \in \mathrm{Soc}\:A$ and a rank one projection $p$ such that either $xp = x$ and $px \neq x$, or, $px = x$ and $xp \neq x$. Say it is the former. In particular, $x \neq 0$. Moreover, since $xp=x$, it follows that $\mathrm{rank}\,(x) = 1$. Thus, by the density of $E(x)$ and the Diagonalization Theorem, there exist a $u \in G(A)$ and a rank one projection $q$ such that $x = qu$. Hence, $x = xp =qup$, and so, $0 \neq x \in qAp$. It can be shown that $\mathrm{dim}\:qAp \leq 1$ (see \cite[Lemma 4.2]{puhltrace}), so it must be the case that $\mathrm{dim}\:qAp = 1$. Hence, by hypothesis, $q = \lambda p$ for some $\lambda \in \mathbb{C}-\left\{0 \right\}$. However, then
$$px = p \left(qu\right)= p \left(\lambda pu\right) = \lambda pu = qu =x$$
which is a contradiction. Thus, it must be the case that $\mathrm{Soc}\:A \subseteq Z(A)$. This gives the result.
\end{proof}

\begin{lemma}\label{b5.3}
$\mathrm{Soc}\:A \subseteq Z(A)$ if and only if for all rank one projections $p$ and $q$ of $A$, $pq = 0 \Rightarrow \mathrm{dim}\:pAq = 0$ and $pq \neq 0 \Rightarrow p = \alpha q$ for some $\alpha \in \mathbb{C}$.
\end{lemma}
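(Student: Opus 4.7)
My plan is to derive Lemma \ref{b5.3} directly from Lemma \ref{b5.2}, treating the two conditions ($pq=0\Rightarrow\dim pAq = 0$ and $pq\neq 0\Rightarrow p = \alpha q$) as a case-split refinement of the single hypothesis in Lemma \ref{b5.2}.

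For the forward implication, I would assume $\mathrm{Soc}\:A\subseteq Z(A)$ and fix rank one projections $p,q$. In the case $pq = 0$, centrality of $p$ and $q$ immediately gives $pxq = xpq = 0$ for every $x\in A$, so $pAq = \{0\}$, i.e.\ $\dim pAq = 0$. In the case $pq \neq 0$, the element $pq = p\cdot\mathbf{1}\cdot q$ is a nonzero member of $pAq$, so $\dim pAq \geq 1$; combined with the bound $\dim pAq \leq 1$ (which is the content of \cite[Lemma 4.2]{puhltrace}, already invoked in the proof of Lemma \ref{b5.2}), one gets $\dim pAq = 1$, and Lemma \ref{b5.2} then delivers $p = \alpha q$ for some $\alpha\in\mathbb C$.

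For the reverse implication, I would argue by reduction to Lemma \ref{b5.2}: it suffices to show that whenever $\dim pAq = 1$, one has $p = \alpha q$ for some $\alpha$. So fix rank one projections $p,q$ with $\dim pAq = 1$. If $pq = 0$, the first of the assumed conditions forces $\dim pAq = 0$, a contradiction. Hence $pq\neq 0$, and the second assumed condition yields $p = \alpha q$, as required. Lemma \ref{b5.2} then gives $\mathrm{Soc}\:A\subseteq Z(A)$.

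There is no genuine obstacle here; the only ingredient beyond Lemma \ref{b5.2} is the universal bound $\dim pAq \leq 1$ for rank one projections, already in use elsewhere in the paper. The lemma is essentially a repackaging of Lemma \ref{b5.2} that separates the product $pq$ into its two mutually exclusive cases.
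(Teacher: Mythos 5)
Your proof is correct and follows essentially the same route as the paper: the reverse implication is verbatim the paper's reduction to Lemma \ref{b5.2} (if $\dim pAq=1$ then $pq\neq 0$ by the first condition, so $p=\alpha q$ by the second), and your forward implication merely spells out what the paper dismisses as ``obvious,'' using centrality for the $pq=0$ case and the bound $\dim pAq\leq 1$ together with Lemma \ref{b5.2} for the $pq\neq 0$ case.
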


\begin{proof}
The forward implication is obvious. For the reverse implication, let $p$ and $q$ be any rank one projections of $A$ such that $\mathrm{dim}\:pAq = 1$. By hypothesis, $pq \neq 0$ and hence $p = \alpha q$ for some $\alpha \in \mathbb{C}$. Thus, by Lemma \ref{b5.2} we have the result.
\end{proof}

\begin{theorem}\label{b5.4}
$\mathrm{Soc}\:A \subseteq Z(A)$ if and only if for all $x \in \mathrm{Soc}\:A$, $x^{2} = 0 \Rightarrow x=0$.
\end{theorem}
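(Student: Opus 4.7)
The plan is to handle the two implications separately, using Lemma \ref{b5.1} for the reverse direction and the radical characterization for the forward direction.

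For the forward implication, I would assume $\mathrm{Soc}\:A \subseteq Z(A)$ and take $x \in \mathrm{Soc}\:A$ with $x^{2}=0$. Since $x$ commutes with every $y\in A$, I compute $(yx)^{2} = yxyx = y^{2}x^{2} = 0$, so $yx$ is nilpotent and $\sigma(yx)=\{0\}$ for every $y \in A$. By the standard characterization of the Jacobson radical this forces $x \in \mathrm{Rad}\:A$, and semisimplicity gives $x=0$.

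For the reverse implication, I would invoke Lemma \ref{b5.1} and show the biconditional $xp=x \Leftrightarrow px=x$ for every $x \in \mathrm{Soc}\:A$ and every rank one projection $p$. Suppose $xp=x$, and set
$$y = (\mathbf{1}-p)xp.$$
Since $\mathrm{Soc}\:A$ is a two-sided ideal, $y \in \mathrm{Soc}\:A$. Using $p(\mathbf{1}-p)=0$ I obtain
$$y^{2} = (\mathbf{1}-p)xp(\mathbf{1}-p)xp = (\mathbf{1}-p)x\,[p(\mathbf{1}-p)]\,xp = 0.$$
By the hypothesis, $y=0$, i.e.\ $(\mathbf{1}-p)xp=0$, so $xp=pxp$. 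Combined with $xp=x$ this yields $x=pxp$, whence $px = p(pxp) = pxp = x$. The converse implication is obtained by the symmetric choice $z = px(\mathbf{1}-p)$, which again lies in $\mathrm{Soc}\:A$ and satisfies $z^{2}=0$, so $z=0$, giving $px=pxp$ and hence $xp=x$.

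The only real subtlety is in the reverse direction, where one has to produce a socle element whose square vanishes purely from the one-sided hypothesis $xp=x$; the construction $y=(\mathbf{1}-p)xp$ is essentially forced by the identity $p(\mathbf{1}-p)=0$. The forward direction is almost immediate once one remembers that the Jacobson radical of a Banach algebra can be described spectrally as $\{a \in A : \sigma(ya)=\{0\}\ \text{for all}\ y\in A\}$, so nothing beyond the hypothesis $x^{2}=0$ plus centrality is needed.
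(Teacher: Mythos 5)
Your proof is correct, but it takes a genuinely different route from the paper's in both directions. For the forward implication, the paper uses von Neumann regularity of socle elements: writing $x = xyx$ and using centrality gives $x = x^{2}y = 0$ immediately; you instead show $(yx)^{2}=0$ for every $y$ and appeal to the spectral characterization $\mathrm{Rad}\:A = \left\{a : \sigma(ya)=\left\{0\right\}\ \text{for all}\ y \in A\right\}$ together with semisimplicity. Both are one-line arguments; the paper's stays entirely within the toolkit it has already quoted, while yours imports a standard (and perfectly legitimate) radical characterization. For the reverse implication the divergence is more substantial: the paper routes through Lemma \ref{b5.3}, verifying the two projection conditions $pq=0 \Rightarrow \mathrm{dim}\:pAq=0$ and $pq\neq 0 \Rightarrow p=\alpha q$ by squaring the elements $qp$, $pyq$ and $pqp-pq$, whereas you go straight to Lemma \ref{b5.1} and establish the biconditional $xp=x \Leftrightarrow px=x$ by squaring $\left(\mathbf{1}-p\right)xp$ and $px\left(\mathbf{1}-p\right)$. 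Your computation is sound ($y=\left(\mathbf{1}-p\right)xp$ lies in $\mathrm{Soc}\:A$ because the socle is a two-sided ideal, $y^{2}=0$ since $p\left(\mathbf{1}-p\right)=0$, and $y=0$ yields $xp=pxp$, hence $x=pxp$ and $px=x$), and it bypasses Lemmas \ref{b5.2} and \ref{b5.3} entirely, which makes it shorter and arguably cleaner; the paper's detour through those lemmas earns its keep elsewhere, since Lemma \ref{b5.3} and Lemma \ref{b5.5} are reused in the proof of Theorem \ref{b5.6}. One small observation worth recording: your nilpotency argument never uses that $p$ has rank one — it works for any idempotent — so the minimality of $p$ enters only through the invocation of Lemma \ref{b5.1}.
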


\begin{proof}
Suppose first that $\mathrm{Soc}\:A \subseteq Z(A)$. Let $x \in \mathrm{Soc}\:A$ such that $x^{2}= 0$. Since $x$ is von Neumann regular, it follows that $x = xyx$ for some $y \in A$. Thus, $x = x^{2}y = 0$. Conversely, suppose that $x=0$ whenever $x^{2} = 0$ for all $x \in \mathrm{Soc}\:A$. Let $p$ and $q$ be any rank one projections such that $pq = 0$. Then, in particular, $\left(qp\right)^{2} = qpqp = 0$. Thus, $qp = 0$. Consequently, for any $y \in A$, we have $\left(pyq\right)^{2} = pyqpyq = 0$, and so, $pyq = 0$. In other words, $\mathrm{dim}\:pAq = 0$. This shows that $pq = 0 \Rightarrow \mathrm{dim}\:pAq = 0$. Next, assume that $pq \neq 0$ for some rank one projections $p$ and $q$ of $A$. Now, $\left(pqp-pq\right)^{2} = \left(qpq-pq\right)^{2} = 0$, and so, by hypothesis, $pqp = pq$ and $qpq = pq$. Hence, by the minimality of $p$ and $q$ and the fact that $pq \neq 0$, we get $p = \alpha q$ for some $\alpha \in \mathbb{C}-\left\{0 \right\}$. This shows that $pq \neq 0 \Rightarrow p = \alpha q$ for some $\alpha \in \mathbb{C}$. By Lemma \ref{b5.3} we conclude that $\mathrm{Soc}\:A \subseteq Z(A)$, establishing the result.   
\end{proof}

\begin{lemma}\label{b5.5}
$\mathrm{dim}\:aAa = \mathrm{rank}\,(a)$ for all $a \in \mathrm{Soc}\:A$ if and only if for all rank one projections $p$ and $q$ of $A$, $pq= 0 \Rightarrow \mathrm{dim}\:pAq = 0$.
\end{lemma}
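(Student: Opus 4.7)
The plan is to prove both implications directly, with the forward direction as the nontrivial half. The key move there is to promote a pair of rank one projections $p, q$ with $pq = 0$ to a \emph{genuinely orthogonal} rank two projection $e$, so that the standing hypothesis can be applied at rank $2$.

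For the forward direction, assume $\mathrm{dim}\:aAa = \mathrm{rank}\,(a)$ for every $a \in \mathrm{Soc}\:A$, and suppose toward a contradiction that rank one projections $p,q$ satisfy $pq = 0$ and $\mathrm{dim}\:pAq > 0$. Set $r := q - qp$. A short calculation using $pq = 0$ gives $r^{2} = r$, $pr = rp = 0$, and $r \neq 0$ (otherwise $q = qp$, whence $q = q^{2} = qp \cdot q = q \cdot pq = 0$). Hence $r$ is a rank one projection orthogonal to $p$ from both sides, so by \cite[Theorem 2.16]{aupetitmoutontrace} the element $e := p + r$ is a projection of rank $2$. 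Now pick $y \in A$ with $pyq \neq 0$; minimality of $p$ yields a scalar $\mu$ with $pyqp = \mu p$, and
$$pyr = py(q - qp) = pyq - \mu p.$$
If this vanished then $pyq = \mu p$; multiplying on the right by $q$ and invoking $pq = 0$ would give $pyq = 0$, a contradiction. Hence $pyr \neq 0$, so $\mathrm{dim}\:pAr \geq 1$. Decomposing $eAe = pAp \oplus pAr \oplus rAp \oplus rAr$ (the four summands being disjoint because an element of $p_{1}Ap_{2}$ satisfies $p_{1}\cdot x \cdot p_{2} = x$ but is annihilated on the correct side by the opposite idempotent) and using $\mathrm{dim}\:pAp = \mathrm{dim}\:rAr = 1$ by minimality, we obtain $\mathrm{dim}\:eAe \geq 3 > 2 = \mathrm{rank}\,(e)$, contradicting the hypothesis at $a = e$. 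Therefore $\mathrm{dim}\:pAq = 0$.

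For the converse, let $a \in \mathrm{Soc}\:A$ be nonzero and, by the density of $E(a)$ together with the Diagonalization Theorem, write $a = \lambda_{1}up_{1} + \cdots + \lambda_{n}up_{n}$ with $u \in G(A)$, pairwise orthogonal rank one projections $p_{1}, \ldots, p_{n}$, nonzero scalars $\lambda_{i}$, and $\mathrm{rank}\,(a) = n$. Exactly as in the proof of Theorem \ref{b4.5}, setting $p := p_{1} + \cdots + p_{n}$ yields $aAa = upAp$ and hence $\mathrm{dim}\:aAa = \mathrm{dim}\:pAp$. The $p_{i}$ are pairwise orthogonal, so the hypothesis forces $p_{i}Ap_{j} = \left\{0\right\}$ for $i \neq j$, while minimality gives $p_{i}Ap_{i} = \mathbb{C}p_{i}$. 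Decomposing $pAp = \bigoplus_{i, j} p_{i}Ap_{j}$ (directness follows by left-multiplying by $p_{k}$ and right-multiplying by $p_{l}$) we conclude $\mathrm{dim}\:pAp = n = \mathrm{rank}\,(a)$, as required.

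The main technical hurdle is the construction of $e$ in the forward direction: the hypothesis only supplies $pq = 0$, not $qp = 0$, so $p + q$ need not be idempotent and must first be orthogonalized via $r = q - qp$. Once $e$ is in hand, the rest reduces to the Peirce-style decomposition of $eAe$ and $pAp$ into the blocks cut out by the constituent rank one projections, together with minimality.
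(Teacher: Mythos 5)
Your proof is correct, and the forward direction takes a genuinely different route from the paper's. The paper applies the hypothesis directly to $a = p+q$, which need not be a projection when $qp \neq 0$: it uses subadditivity to get $\mathrm{rank}\,(p+q) \leq 2$, deduces that $\left\{p, q\right\}$ spans $(p+q)A(p+q)$, writes $(p+q)x_{0}(p+q) = \lambda p + \beta q$ for a witness $x_{0} \in pAq$, and kills $px_{0}q$ by multiplying through and using $pq = 0$. You instead orthogonalize first, replacing $q$ by $r = q - qp = q\left(\mathbf{1}-p\right)$ so that $e = p + r$ is an honest rank two projection, and then run a Peirce-type dimension count $\mathrm{dim}\:eAe \geq 3 > 2 = \mathrm{rank}\,(e)$. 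Both arguments apply the hypothesis at rank $2$; yours buys a cleaner structural contradiction (a dimension count on a genuine corner algebra $eAe$, consistent with the decompositions used elsewhere in the paper, e.g.\ in Theorem \ref{b5.8}), at the cost of the extra orthogonalization step and the verification that $pyr \neq 0$, whereas the paper's version avoids constructing a projection at all but relies on a more ad hoc spanning computation. The only point you leave implicit is that $r$ has rank one: this follows since $r = q\left(\mathbf{1}-p\right)$ gives $\mathrm{rank}\,(r) \leq \mathrm{rank}\,(q) = 1$, while $r \neq 0$ and $r = r^{2}$ force $\mathrm{rank}\,(r) \geq 1$; with that noted, the minimality of $r$ (hence $\mathrm{dim}\:rAr = 1$) and the application of \cite[Theorem 2.16]{aupetitmoutontrace} to get $\mathrm{rank}\,(e) = 2$ are both legitimate. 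Your converse is essentially identical to the paper's.
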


\begin{proof}
Suppose first that $\mathrm{dim}\:aAa = \mathrm{rank}\,(a)$ for all $a \in \mathrm{Soc}\:A$. Let $p$ and $q$ be any rank one projections such that $pq = 0$. We claim that $\mathrm{dim}\:pAq = 0$: Suppose this is false. Then there exists an $x_{0} \in pAq$ such that $px_{0}q \neq 0$. Since $pq = 0$, it follows that $p \neq \alpha q$ for all $\alpha \in \mathbb{C}$. Moreover, by the subadditivity of the rank we have $\mathrm{rank}\,(p+q) \leq 2$. So, by hypothesis, $\left\{p, q \right\}$ is a spanning set of $(p+q)A(p+q)$ since $(p+q)p(p+q)$, $(p+q)q(p+q)$ and $(p+q)^{2}$ in $(p+q)A(p+q)$ implies that $p, q \in (p+q)A(p+q)$. Consequently, we can find $\lambda, \beta \in \mathbb{C}$ such that
$$\lambda p + \beta q = (p+q)x_{0}(p+q) = px_{0}p +px_{0}q + qx_{0}p + qx_{0}q.$$
However, then
$$px_{0}q = p^{2}x_{0}q^{2} = \lambda pq + \beta pq -px_{0}pq - pqx_{0}pq - pqx_{0}q = 0,$$
contradicting our choice of $x_{0}$. This proves our claim. So $pq = 0$ implies $\mathrm{dim}\:pAq = 0$. For the converse, let $a \in \mathrm{Soc}\:A - \left\{0 \right\}$ be arbitrary. By the density of $E(a)$ and the Diagonalization Theorem there exist a $u \in G(A)$, mutually orthogonal rank one projections $p_{1}, \ldots, p_{n}$ and $\lambda_{1}, \ldots, \lambda_{n} \in \mathbb{C}-\left\{0 \right\}$ such that $ua = \lambda_{1}p_{1} + \cdots +\lambda_{n}p_{n}$, where $\mathrm{rank}\,(a) = n \geq 1$. Observe that $uaAua = pAp$, where $p = p_{1} + \cdots +p_{n}$, by the orthogonality of the $p_{i}$. Moreover, by hypothesis, $\mathrm{dim}\:p_{i}Ap_{j} = 0$ for $i \neq j$. Hence,
$$\mathrm{dim}\:pAp \leq \mathrm{dim}\:\left(\sum_{i=1}^{n}\sum_{j=1}^{n}p_{i}Ap_{j}\right) \leq \sum_{i=1}^{n}\sum_{j=1}^{n}\mathrm{dim}\:p_{i}Ap_{j} = n,$$
where the final equality follows from the minimality of the $p_{i}$. Furthermore, by orthogonality it follows that $\left\{p_{1}, \ldots, p_{n} \right\}$ is a linearly independent set contained in $pAp$. Consequently,
$$ \mathrm{dim}\:aAa = \mathrm{dim}\:uaAua = \mathrm{dim}\:pAp = n = \mathrm{rank}\,(a),$$
as desired. This completes the proof.
\end{proof}

\begin{theorem}\label{b5.6}
$\mathrm{Soc}\:A \subseteq Z(A)$ if and only if $\mathrm{dim}\:aAa = \mathrm{rank}\,(a)$ for all $a \in \mathrm{Soc}\:A$.
\end{theorem}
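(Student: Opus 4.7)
The plan is to handle the two implications separately, reducing each to an already-established criterion from this section. For the ``$\mathrm{Soc}\:A \subseteq Z(A) \Rightarrow$'' direction, I would suppose rank one projections $p,q$ of $A$ satisfy $pq = 0$; since $p \in \mathrm{Soc}\:A \subseteq Z(A)$ is central, $pxq = xpq = 0$ for every $x \in A$, so $\mathrm{dim}\:pAq = 0$. By Lemma~\ref{b5.5} this immediately gives $\mathrm{dim}\:aAa = \mathrm{rank}\,(a)$ for all $a \in \mathrm{Soc}\:A$.

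For the converse, assume $\mathrm{dim}\:aAa = \mathrm{rank}\,(a)$ for every $a \in \mathrm{Soc}\:A$. By Theorem~\ref{b5.4} it is enough to show that any $x \in \mathrm{Soc}\:A$ with $x^{2} = 0$ must be $0$. Given such an $x$, I would invoke Theorem~\ref{b3.13} to enclose $x$ in a subalgebra $B \subseteq \mathrm{Soc}\:A$ with $B \cong M_{n_{1}}\left(\mathbb{C}\right) \oplus \cdots \oplus M_{n_{k}}\left(\mathbb{C}\right)$. The strategy is then to prove that each $n_{j} = 1$; once that is done, $B \cong \mathbb{C}^{k}$ is commutative and $x^{2} = 0$ componentwise forces $x = 0$.

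To push the hypothesis through $B$, let $e_{j} \in B$ be the identity of the $j$-th matrix summand. Then $e_{j}$ is a finite-rank projection of $A$, and $e_{j}Ae_{j}$ is a closed semisimple subalgebra with identity $e_{j}$ which is finite-dimensional by the bound $\mathrm{dim}\:e_{j}Ae_{j} \leq [\mathrm{rank}\,(e_{j})]^{2}$ from \cite[Theorem~2.12]{aupetitmoutontrace}. The Wedderburn--Artin theorem applied to the finite-dimensional semisimple algebra $e_{j}Ae_{j}$ (see Corollary~\ref{b3.10}) therefore gives $e_{j}Ae_{j} \cong M_{m_{1}}\left(\mathbb{C}\right) \oplus \cdots \oplus M_{m_{\ell}}\left(\mathbb{C}\right)$. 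Decomposing the identity of each block into orthogonal minimal projections of $A$ and applying \cite[Theorem~2.16]{aupetitmoutontrace} yields $\mathrm{rank}\,(e_{j}) = \sum_{i} m_{i}$ and $\mathrm{dim}\:e_{j}Ae_{j} = \sum_{i} m_{i}^{2}$. The hypothesis applied to $a = e_{j}$ then forces $\sum m_{i}^{2} = \sum m_{i}$, which together with $m_{i} \geq 1$ forces every $m_{i} = 1$; in particular $e_{j}Ae_{j}$ is commutative. Since the subalgebra $e_{j}Be_{j} \cong M_{n_{j}}\left(\mathbb{C}\right)$ of $e_{j}Ae_{j}$ must also be commutative, we conclude $n_{j} = 1$.

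The main obstacle I anticipate is identifying the correct packaging of the hypothesis: a direct spectral or Diagonalization argument applied to an individual nilpotent $x$ in $\mathrm{Soc}\:A$ tends to get bogged down in cross-terms, whereas the combination of Theorem~\ref{b5.4} (which converts the central-inclusion statement into the clean test ``$x^{2} = 0 \Rightarrow x = 0$'') with Theorem~\ref{b3.13} (which drops $x$ into a finite-dimensional Wedderburn--Artin host) reduces everything to the elementary arithmetic observation that $\sum m_{i}^{2} = \sum m_{i}$ with $m_{i} \geq 1$ forces every $m_{i} = 1$.
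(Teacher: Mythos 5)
Your forward direction is the paper's own argument (centrality of $p$ gives $pAq = Apq = \{0\}$ when $pq=0$, then Lemma~\ref{b5.5}), but your converse takes a genuinely different and correct route. The paper also reduces to the test ``$x^{2}=0 \Rightarrow x=0$'' via Theorem~\ref{b5.4}, but then works directly on the nilpotent $x$: it diagonalizes $ux$ and $xv$ into orthogonal rank one projections $p_{i}$ and $q_{i}$, shows $p_{j}q_{i}=0$ from $x^{2}=0$, uses the hypothesis (through the spanning argument on $(p_{j}+q_{i})A(p_{j}+q_{i})$) to force $q_{i}p_{j}=0$ as well, invokes Lemma~\ref{b5.5} to get $\dim q_{i}Ap_{j}=0$, and concludes $xAx=\{0\}$, hence $x=0$ by von Neumann regularity --- precisely the cross-term bookkeeping you wanted to avoid. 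Your alternative instead applies the hypothesis to finite-rank projections: for such a $p$, the finite-dimensional semisimple algebra $pAp \cong M_{m_{1}}(\mathbb{C}) \oplus \cdots \oplus M_{m_{\ell}}(\mathbb{C})$ has $\dim pAp = \sum m_{i}^{2}$ and $\mathrm{rank}\,(p) = \sum m_{i}$ (via (\ref{beq2.2}) and \cite[Theorem~2.16]{aupetitmoutontrace}), so the hypothesis forces every $m_{i}=1$ and $pAp$ is commutative; embedding $x$ in a Wedderburn--Artin host $B$ by Theorem~\ref{b3.13} then kills $x$ since each block $e_{j}Be_{j}\cong M_{n_{j}}(\mathbb{C})$ sits inside the commutative $e_{j}Ae_{j}$. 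All the supporting facts you need are available without circularity. What your route buys is conceptual economy --- the whole converse collapses to the arithmetic fact that $\sum m_{i}^{2} = \sum m_{i}$ with $m_{i}\geq 1$ forces $m_{i}=1$ --- and it establishes the commutativity of $pAp$ for all finite-rank projections $p$ as a byproduct, which is essentially part (1) of Theorem~\ref{b5.8}; what the paper's route buys is independence from the Section~3 structural machinery, staying entirely within the elementary tools of Section~5.
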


\begin{proof}
The forward implication readily follows from Lemma \ref{b5.5}. Conversely, suppose that $\mathrm{dim}\:aAa = \mathrm{rank}\,(a)$ for all $a \in \mathrm{Soc}\:A$. Let $x \in \mathrm{Soc}\:A$ with $x^{2} = 0$ be arbitrary. We claim that $x = 0$: Suppose this is false. By the density of $E(x)$ and the Diagonalization Theorem there exist $u, v \in G(A)$, mutually orthogonal rank one projections $p_{1}, \ldots, p_{n}$, mutually orthogonal rank one projections $q_{1}, \ldots, q_{n}$, and $\alpha_{1}, \ldots, \alpha_{n}, \lambda_{1}, \ldots, \lambda_{n} \in \mathbb{C}-\left\{0 \right\}$ such that
$ux = \alpha_{1}p_{1} + \cdots +\alpha_{n}p_{n}$ and $xv = \lambda_{1}q_{1} + \cdots +\lambda_{n}q_{n}$, where $\mathrm{rank}\,(x) = n \geq 1$. It is useful to recall here that the $p_{i}$ all belong to $Aux \cap uxA$, and similarly, that the $q_{i}$ all belong to $Axv \cap xvA$. Hence, for each $i \in \left\{1, \ldots, n \right\}$ there exist $x_{i}, y_{i} \in A$ such that $p_{i} = x_{i}ux$ and $q_{i} = xvy_{i}$. Hence, $p_{j}q_{i} = 0$ for all $i, j \in \left\{1, \ldots, n \right\}$. Thus, for any $i, j \in \left\{1, \ldots, n \right\}$, we have $p_{j} \neq \alpha q_{i}$ for all $\alpha \in \mathbb{C}$. We now show that $q_{i}p_{j} = 0$ for all $i, j \in \left\{1, \ldots, n \right\}$: Let $i, j \in \left\{1, \ldots, n \right\}$ be arbitrary but fixed. Since $\left(p_{j} + q_{i}\right)^{2}$, $\left(p_{j} + q_{i}\right)p_{j}\left(p_{j} + q_{i}\right)$ and $\left(p_{j} + q_{i}\right)q_{i}\left(p_{j} + q_{i}\right)$ are all in $\left(p_{j} + q_{i}\right)A\left(p_{j} + q_{i}\right)$, it follows that $p_{j}, q_{i}, q_{i}p_{j} \in \left(p_{j} + q_{i}\right)A\left(p_{j} + q_{i}\right)$. Moreover, by the subadditivity of the rank we get $\mathrm{rank}\,\left(p_{j} +q_{i}\right) \leq 2$. Hence, by hypothesis, we have $q_{i}p_{j} = \beta p_{j} + \lambda q_{i}$ for some $\beta, \lambda \in \mathbb{C}$. Consequently, since $q_{i}p_{j}q_{i} = \beta p_{j}q_{i} + \lambda q_{i}$, $p_{j}q_{i}p_{j} = \beta p_{j} + \lambda p_{j}q_{i}$ and $p_{j}q_{i} = 0$, we get $\lambda q_{i} = 0$ and $\beta p_{j} = 0$. Hence, $q_{i}p_{j} = 0$. So, by Lemma \ref{b5.5} we may infer that $\mathrm{dim}\:q_{i}Ap_{j} = 0$ for all $i, j \in \left\{1, \ldots, n \right\}$. However, this means that for any $y \in A$ we have
$$xyx = xvv^{-1}yu^{-1}ux \in \sum_{i=1}^{n}\sum_{j=1}^{n}q_{i}Ap_{j} = \left\{0 \right\}.$$
But $x$ is von Neumann regular, so $x = 0$. This contradiction now proves our claim. Hence, $\mathrm{Soc}\:A \subseteq Z(A)$ by Theorem \ref{b5.4}. This establishes the result.
\end{proof}

\begin{theorem}\label{b5.7}
$\mathcal{C} = \left\{0 \right\}$ if and only if $\mathrm{Soc}\:A \subseteq Z(A)$.
\end{theorem}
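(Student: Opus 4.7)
The plan is to handle the two implications separately, with the reverse being immediate and the forward direction reduced to Theorem \ref{b5.4} via Von Neumann regularity.

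For the reverse implication, assume $\mathrm{Soc}\,A \subseteq Z(A)$. Then for any $x, y \in \mathrm{Soc}\,A$, the element $x$ lies in $Z(A)$, so $xy = yx$, whence $[x, y] = 0$. Thus $\mathcal{C} = \{0\}$.

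For the forward implication, suppose $\mathcal{C} = \{0\}$, which means that $\mathrm{Soc}\,A$ is a commutative set (any two socle elements commute). To conclude $\mathrm{Soc}\,A \subseteq Z(A)$, I will invoke Theorem \ref{b5.4} and show that every square-zero element of the socle is zero. So let $x \in \mathrm{Soc}\,A$ satisfy $x^{2} = 0$. By Von Neumann regularity of the socle, there exists $z \in \mathrm{Soc}\,A$ with $x = xzx$. Since $xz \in \mathrm{Soc}\,A$ and the socle is commutative under our assumption, we have $x(xz) = (xz)x$, that is, $x^{2}z = xzx$. The left-hand side is $0$ while the right-hand side equals $x$, so $x = 0$. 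Theorem \ref{b5.4} now yields $\mathrm{Soc}\,A \subseteq Z(A)$.

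The main (and essentially only) obstacle is the forward direction, and it is cleanly dispatched by combining the abstract Von Neumann regularity of the socle with the characterization in Theorem \ref{b5.4}; this is the natural bridge between the purely multiplicative condition $\mathcal{C} = \{0\}$ and the centrality condition, since commutativity of the socle is a priori weaker than centrality.
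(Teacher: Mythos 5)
Your proof is correct, but it takes a genuinely different route from the paper's. The paper argues directly with minimal projections: from $\mathcal{C}=\{0\}$ it deduces $[xp,py]=0$ for all $x,y\in A$ (using that $xp,py\in\mathrm{Soc}\:A$ because the socle is an ideal), concludes that $xpy\in pAp=\mathbb{C}p$, hence that each ideal $J_{p}$ collapses to $\mathbb{C}p$, so every socle element is a linear combination of minimal projections; it then checks each such projection is central via $[p,px]=[xp,p]=0$. You instead route through Theorem \ref{b5.4}: taking $x\in\mathrm{Soc}\:A$ with $x^{2}=0$ and a Von Neumann regularizer $z$ with $x=xzx$, the commutativity of the socle gives $x=xzx=x^{2}z=0$, and Theorem \ref{b5.4} finishes. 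Your argument is shorter and cleanly exploits the already-established square-zero characterization (note $xz\in\mathrm{Soc}\:A$ regardless of whether $z$ is taken in the socle, since the socle is an ideal, so that step is robust); the paper's argument is more self-contained and more revealing, since it exhibits explicitly what $\mathcal{C}=\{0\}$ forces structurally, namely that every $J_{p}$ is one-dimensional. There is no circularity in your use of Theorem \ref{b5.4}, which precedes this result and does not depend on it.
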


\begin{proof}
The reverse implication is obvious. For the forward implication, let $p$ be a minimal projection of $A$. By hypothesis, for all $x, y \in A$ it follows that $\left[xp, py\right] = 0$. Hence, $xpy \in pAp$ for all $x, y \in A$. Thus, $J_{p}$ is one-dimensional and equals $pAp$. Consequently, every element of $\mathrm{Soc}\:A$ is of the form $\lambda_{1}p_{1} + \cdots +\lambda_{n}p_{n}$ for some minimal projections $p_{1}, \ldots, p_{n}$ and complex numbers $\lambda_{1}, \ldots, \lambda_{n}$. Next pick $x \in A$ and let $p$ be any projection in $\mathrm{Soc}\:A$. Since $\left[p, px\right] = \left[xp, p\right] = 0$, we infer that $xp = pxp = px$. Thus, $\mathrm{Soc}\:A \subseteq Z(A)$. 
\end{proof}

To conclude we show that the inequality
$$\mathrm{rank}\,(p) \leq \mathrm{dim}\:pAp \leq \left[\mathrm{rank}\,(p)\right]^{2}$$
for all $p = p^{2} \in \mathrm{Soc}\:A$ can be ``solved" at the extremities:

\begin{theorem}\label{b5.8}
$\left.\right.$
\begin{itemize}
\item[(1)]
$\mathrm{dim}\:pAp = \mathrm{rank}\,(p)$ for all finite-rank projections $p$ of $A$ if and only if $\mathrm{Soc}\:A \subseteq Z(A)$.
\item[(2)]
$\mathrm{dim}\:pAp = \left[\mathrm{rank}\,(p)\right]^{2}$ for all finite-rank projections $p$ of $A$ if and only if the Generalized Shoda's Theorem holds for $A$.
\end{itemize}
\end{theorem}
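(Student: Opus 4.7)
The plan is to reduce both parts to results already established in the paper. Part (1) will follow from Theorem \ref{b5.6} once the projection hypothesis is upgraded to cover all socle elements, while Part (2) will chain together the Wedderburn-Artin structure of the finite-dimensional semisimple subalgebra $pAp$ (Corollary \ref{b3.10}), the characterization of minimal two-sided ideals in Theorem \ref{b4.3}, and the equivalence of Theorem \ref{b4.4}.

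For Part (1), the reverse implication is immediate from Theorem \ref{b5.6} since finite-rank projections belong to $\mathrm{Soc}\:A$. For the forward implication, I would take any nonzero $a \in \mathrm{Soc}\:A$, apply the density of $E(a)$ and the Diagonalization Theorem to write $ua = \lambda_{1}p_{1} + \cdots + \lambda_{n}p_{n}$ with $u \in G(A)$, orthogonal rank one projections $p_{i}$ and nonzero $\lambda_{i}$, and set $p = p_{1} + \cdots + p_{n}$. The same short computation used in the proof of Lemma \ref{b5.5} gives $uaAua = pAp$, and since left-multiplication by the invertible $u$ is a linear bijection taking $aAa$ onto $uaAua$ (using $aAu = aA$), one has $\mathrm{dim}\:aAa = \mathrm{dim}\:pAp$, together with $\mathrm{rank}\,(a) = n = \mathrm{rank}\,(p)$. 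The hypothesis therefore upgrades to $\mathrm{dim}\:aAa = \mathrm{rank}\,(a)$ for every $a \in \mathrm{Soc}\:A$, and Theorem \ref{b5.6} closes the argument.

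For Part (2), the backward direction is essentially one line: if the Generalized Shoda's Theorem holds then Theorem \ref{b4.4} gives that $\mathrm{Soc}\:A$ is a minimal two-sided ideal, so Theorem \ref{b4.3} yields $pAp \cong M_{n_{p}}\left(\mathbb{C}\right)$ for every finite-rank projection $p$, whence $\mathrm{dim}\:pAp = n_{p}^{2} = \left[\mathrm{rank}\,(p)\right]^{2}$ after using (\ref{beq2.2}) to identify $n_{p}$ with $\mathrm{rank}_{A}(p)$. The forward direction is where the actual work sits, though it is still short: $pAp$ is a finite-dimensional semisimple algebra with identity $p$, so Corollary \ref{b3.10} gives $pAp \cong M_{n_{1}}\left(\mathbb{C}\right) \oplus \cdots \oplus M_{n_{k}}\left(\mathbb{C}\right)$ with each $n_{i} \geq 1$. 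Reading off $\mathrm{dim}\:pAp = n_{1}^{2} + \cdots + n_{k}^{2}$ and $\mathrm{rank}\,(p) = n_{1} + \cdots + n_{k}$ on the two sides, the hypothesis forces $\left(n_{1} + \cdots + n_{k}\right)^{2} = n_{1}^{2} + \cdots + n_{k}^{2}$; expanding, the cross terms $\sum_{i<j} 2n_{i}n_{j}$ must vanish, which since $n_{i} \geq 1$ can only occur when $k = 1$. Hence $pAp \cong M_{n_{p}}\left(\mathbb{C}\right)$ for every finite-rank projection $p$, and Theorem \ref{b4.3} combined with Theorem \ref{b4.4} deliver the Generalized Shoda's Theorem.

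The argument is therefore almost entirely a matter of bookkeeping against earlier results; the principal care point is correctly matching the rank of the identity of a direct sum of matrix algebras against $\mathrm{rank}_{A}(p)$, which is essentially the same calculation performed inside the proof of Theorem \ref{b4.5}.
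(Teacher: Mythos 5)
Your proposal is correct and follows essentially the same route as the paper: Part (1) is reduced to Theorem \ref{b5.6} by producing, for each $a \in \mathrm{Soc}\:A$, a projection $p$ with $\mathrm{dim}\:aAa = \mathrm{dim}\:pAp$ and $\mathrm{rank}\,(a) = \mathrm{rank}\,(p)$ via the Diagonalization Theorem, and Part (2) is reduced via Theorems \ref{b4.3} and \ref{b4.4} to the equivalence $\mathrm{dim}\:pAp = \left[\mathrm{rank}\,(p)\right]^{2} \Leftrightarrow pAp \cong M_{n_{p}}\left(\mathbb{C}\right)$, settled by the same $\left(n_{1}+\cdots+n_{k}\right)^{2} = n_{1}^{2}+\cdots+n_{k}^{2} \Rightarrow k=1$ computation the paper borrows from the proof of Theorem \ref{b4.5}. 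The only difference is that you write out explicitly the computations the paper delegates to earlier proofs.
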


\begin{proof}
We firstly consider the characterization which appears as (1) above. By Theorem \ref{b5.6} it suffices to show that for each $a \in \mathrm{Soc}\:A$, there exists a projection $p \in \mathrm{Soc}\:A$ such that $\mathrm{dim}\:aAa = \mathrm{dim}\:pAp$ and $\mathrm{rank}\,(a) = \mathrm{rank}\,(p)$. But this is exactly what was shown in the first part of the proof of Theorem \ref{b4.5}. Next we consider the characterization which appears as (2) above. By Theorem \ref{b4.3} and Theorem \ref{b4.5}, it will suffice to show that $\mathrm{dim}\:pAp = \left[\mathrm{rank}\,(p)\right]^{2}$ for all finite-rank projections $p$ of $A$ if and only if $pAp \cong M_{n_{p}} \left(\mathbb{C}\right)$ for all finite-rank projections $p$ of $A$: The reverse implication is obvious since $p$ is the identity of $pAp$, and the forward implication follows from the last part of the argument in the proof of Theorem \ref{b4.5}. This completes the proof.
\end{proof}

Theorem \ref{b5.8} suggests that the dimension of certain subalgebras of the socle is to some extent a measure of commutativity.\\

\bibliographystyle{amsplain}
\bibliography{Spectral}

\end{document}